\documentclass[reqno, 10pt]{amsart}
\usepackage{amssymb,amsmath,amsthm,amsfonts,color} 
 \oddsidemargin7.5mm
 \evensidemargin7.5mm 
 \textwidth13.5cm 

\usepackage{etoolbox}

\makeatletter
\let\old@tocline\@tocline
\let\section@tocline\@tocline
\newcommand{\subsection@dotsep}{4.5}
\newcommand{\subsubsection@dotsep}{4.5}
\patchcmd{\@tocline}
  {\hfil}
  {\nobreak
     \leaders\hbox{$\m@th
        \mkern \subsection@dotsep mu\hbox{.}\mkern \subsection@dotsep mu$}\hfill
     \nobreak}{}{}
\let\subsection@tocline\@tocline
\let\@tocline\old@tocline

\patchcmd{\@tocline}
  {\hfil}
  {\nobreak
     \leaders\hbox{$\m@th
        \mkern \subsubsection@dotsep mu\hbox{.}\mkern \subsubsection@dotsep mu$}\hfill
     \nobreak}{}{}
\let\subsubsection@tocline\@tocline
\let\@tocline\old@tocline

\let\old@l@subsection\l@subsection
\let\old@l@subsubsection\l@subsubsection

\def\@tocwriteb#1#2#3{%
  \begingroup
    \@xp\def\csname #2@tocline\endcsname##1##2##3##4##5##6{%
      \ifnum##1>\c@tocdepth
      \else \sbox\z@{##5\let\indentlabel\@tochangmeasure##6}\fi}%
    \csname l@#2\endcsname{#1{\csname#2name\endcsname}{\@secnumber}{}}%
  \endgroup
  \addcontentsline{toc}{#2}%
    {\protect#1{\csname#2name\endcsname}{\@secnumber}{#3}}}%

\newlength{\@tocsectionindent}
\newlength{\@tocsubsectionindent}
\newlength{\@tocsubsubsectionindent}
\newlength{\@tocsectionnumwidth}
\newlength{\@tocsubsectionnumwidth}
\newlength{\@tocsubsubsectionnumwidth}
\newcommand{\settocsectionnumwidth}[1]{\setlength{\@tocsectionnumwidth}{#1}}
\newcommand{\settocsubsectionnumwidth}[1]{\setlength{\@tocsubsectionnumwidth}{#1}}
\newcommand{\settocsubsubsectionnumwidth}[1]{\setlength{\@tocsubsubsectionnumwidth}{#1}}
\newcommand{\settocsectionindent}[1]{\setlength{\@tocsectionindent}{#1}}
\newcommand{\settocsubsectionindent}[1]{\setlength{\@tocsubsectionindent}{#1}}
\newcommand{\settocsubsubsectionindent}[1]{\setlength{\@tocsubsubsectionindent}{#1}}

\renewcommand{\l@section}{\section@tocline{1}{\@tocsectionvskip}{\@tocsectionindent}{}{\@tocsectionformat}}%
\renewcommand{\l@subsection}{\subsection@tocline{1}{\@tocsubsectionvskip}{\@tocsubsectionindent}{}{\@tocsubsectionformat}}%
\renewcommand{\l@subsubsection}{\subsubsection@tocline{1}{\@tocsubsubsectionvskip}{\@tocsubsubsectionindent}{}{\@tocsubsubsectionformat}}%
\newcommand{\@tocsectionformat}{}
\newcommand{\@tocsubsectionformat}{}
\newcommand{\@tocsubsubsectionformat}{}
\expandafter\def\csname toc@1format\endcsname{\@tocsectionformat}
\expandafter\def\csname toc@2format\endcsname{\@tocsubsectionformat}
\expandafter\def\csname toc@3format\endcsname{\@tocsubsubsectionformat}
\newcommand{\settocsectionformat}[1]{\renewcommand{\@tocsectionformat}{#1}}
\newcommand{\settocsubsectionformat}[1]{\renewcommand{\@tocsubsectionformat}{#1}}
\newcommand{\settocsubsubsectionformat}[1]{\renewcommand{\@tocsubsubsectionformat}{#1}}
\newlength{\@tocsectionvskip}
\newcommand{\settocsectionvskip}[1]{\setlength{\@tocsectionvskip}{#1}}
\newlength{\@tocsubsectionvskip}
\newcommand{\settocsubsectionvskip}[1]{\setlength{\@tocsubsectionvskip}{#1}}
\newlength{\@tocsubsubsectionvskip}
\newcommand{\settocsubsubsectionvskip}[1]{\setlength{\@tocsubsubsectionvskip}{#1}}

\patchcmd{\tocsection}{\indentlabel}{\makebox[\@tocsectionnumwidth][l]}{}{}
\patchcmd{\tocsubsection}{\indentlabel}{\makebox[\@tocsubsectionnumwidth][l]}{}{}
\patchcmd{\tocsubsubsection}{\indentlabel}{\makebox[\@tocsubsubsectionnumwidth][l]}{}{}

\newcommand{\@sectypepnumformat}{}
\renewcommand{\contentsline}[1]{%
  \expandafter\let\expandafter\@sectypepnumformat\csname @toc#1pnumformat\endcsname%
  \csname l@#1\endcsname}
\newcommand{\@tocsectionpnumformat}{}
\newcommand{\@tocsubsectionpnumformat}{}
\newcommand{\@tocsubsubsectionpnumformat}{}
\newcommand{\setsectionpnumformat}[1]{\renewcommand{\@tocsectionpnumformat}{#1}}
\newcommand{\setsubsectionpnumformat}[1]{\renewcommand{\@tocsubsectionpnumformat}{#1}}
\newcommand{\setsubsubsectionpnumformat}[1]{\renewcommand{\@tocsubsubsectionpnumformat}{#1}}
\renewcommand{\@tocpagenum}[1]{%
  \hfill {\mdseries\@sectypepnumformat #1}}

\let\oldappendix\appendix
\renewcommand{\appendix}{%
  \leavevmode\oldappendix%
  \addtocontents{toc}{%
    \protect\settowidth{\protect\@tocsectionnumwidth}{\protect\@tocsectionformat\sectionname\space}%
    \protect\addtolength{\protect\@tocsectionnumwidth}{2em}}%
}
\makeatother



\makeatletter
\settocsectionnumwidth{2em}
\settocsubsectionnumwidth{2.5em}
\settocsubsubsectionnumwidth{3em}
\settocsectionindent{1pc}%
\settocsubsectionindent{\dimexpr\@tocsectionindent+\@tocsectionnumwidth}%
\settocsubsubsectionindent{\dimexpr\@tocsubsectionindent+\@tocsubsectionnumwidth}%
\makeatother

\settocsectionvskip{10pt}
\settocsubsectionvskip{0pt}
\settocsubsubsectionvskip{0pt}



\settocsectionformat{\bfseries}
\settocsubsectionformat{\mdseries}
\settocsubsubsectionformat{\mdseries}
\setsectionpnumformat{\bfseries}
\setsubsectionpnumformat{\mdseries}
\setsubsubsectionpnumformat{\mdseries}


\let\oldtableofcontents\tableofcontents
\renewcommand{\tableofcontents}{%
  \vspace*{-\linespacing}
  \oldtableofcontents}

\setcounter{tocdepth}{3}

\usepackage{mathrsfs,dsfont, comment,mathscinet}

\usepackage{enumerate,esint,accents}
\parskip1mm
\usepackage{mathtools}

\usepackage{graphicx}

\usepackage{epstopdf}
\DeclareGraphicsRule{.tif}{png}{.png}{`convert #1 `dirname #1`/`basename #1 .tif`.png}

\usepackage[colorlinks=true, pdfstartview=FitV, linkcolor=blue, 
            citecolor=blue, urlcolor=blue]{hyperref}

\tolerance=10000
\allowdisplaybreaks
\numberwithin{equation}{section}
\theoremstyle{plain}
\newtheorem{theorem}{Theorem}[section]
\newtheorem{proposition}[theorem]{Proposition}
\newtheorem{lemma}[theorem]{Lemma}

\theoremstyle{definition}
\newtheorem{definition}[theorem]{Definition}
\newtheorem{remark}[theorem]{Remark}

\usepackage{amssymb,amsmath}

\newcommand\R{\mathbb R}
\def\Rz {\mathbb{R}}
\newcommand\M{\mathbb M}

\newcommand\ep{\varepsilon}

\newcommand\wk{\rightharpoonup}

\newcommand{\C}{\mathbb{C}}

\newcommand\be[1]{\begin{equation}\label{#1}}
\newcommand\ee{\end{equation}}
\newcommand\ba[1]{\begin{align}\label{#1}}
\newcommand\ea{\end{align}}
\newcommand\bas{\begin{align*}}
\newcommand\eas{\end{align*}}
\newcommand\nn{\nonumber}
\newcommand\mtwo{\mathbb{M}^{2\times 2}}
\newcommand\HH{\mathcal{H}}
\newcommand\EEE{\color{black}}
\newcommand\UUU{\color{black}}

\newcommand\MMM{\color{black}}

\newcommand\Var{{\rm Var}}


\title[Young-Dupr\'e law for epitaxially-strained thin films]{Analytical validation of the Young-Dupr\'e law for epitaxially-strained thin films}
{}
\author[E. Davoli] {Elisa Davoli} 
\address[Elisa Davoli]{Department of Mathematics\\ University of Vienna\\Oskar-Morgenstern Platz 1\\1090 Vienna (Austria)}
\email[E. Davoli]{elisa.davoli@univie.ac.at}
\author[P.Piovano] {Paolo Piovano} 
\address[Paolo Piovano]{Department of Mathematics\\ University of Vienna\\Oskar-Morgenstern Platz 1\\1090 Vienna (Austria)}
\email[P. Piovano]{paolo.piovano@univie.ac.at}
\subjclass[2010]{35J50, 49J10, 74K35}
\keywords{Young-Dupr\'e, contact angle, wetting, triple junctions, thin films, sharp-interface model, transmission problems, $\Gamma$-convergence}

\begin{document} 
\vskip .2truecm
\begin{abstract}
\small{\MMM We present here an analysis of the regularity of minimizers of a variational model for epitaxially strained thin-films identified by the authors in the companion paper \cite{davoli.piovano}. \EEE
The regularity of energetically-optimal film profiles is studied by extending previous methods and by developing new ideas based on transmission problems. The achieved regularity results relate to both the Stranski-Krastanow and the Volmer-Weber modes, the possibility of different elastic properties between the film and the substrate, and the presence of the surface tensions of all three involved interfaces: film/gas, substrate/gas, and film/substrate. 
Finally, geometrical conditions are provided for the optimal wetting angle, i.e., the angle formed at the contact point of films with the substrate. In particular, the Young-Dupr\'e law is shown to hold, yielding what appears to be the first analytical validation of such law for a thin-film model in the context of Continuum Mechanics.}
\end{abstract}
\maketitle

\tableofcontents
\section{Introduction}

Originally formulated in the context of Fluid Mechanics and sessile liquid drops \cite{DD,Y}, the \emph{Young-Dupr\'e law} characterizes the contact angle formed by drops at any touching point with their supporting surfaces (see Figure \ref{contactangle}). 
 As this condition involves both the  tension of supporting surfaces and the contact angles of drops (see Subsection \ref{subsec: FluidMechanics}), the law is often used to determine the unknown surface tension of certain materials by measuring the contact angles formed by different probe liquids. 

The use of this law is however not only restricted to liquid drops, but it has been naturally extended to \emph{epitaxy}, i.e., to the deposition of crystalline films on crystalline substrates \cite[Section 4.2.2]{Pohl}. Contact-angle conditions are in fact often essential for studying multiple-phase systems, as they represent the crucial boundary conditions for characterizing interface morphologies at (triple) \emph{junctions} \cite{BK,T}. A crucial difference between the setting of sessile drops and the one of thin-film deposition, though, is that in the latter elasticity has also to be taken into account as it might strongly affect the profile of the film. Indeed, the \emph{mismatch} between the crystalline lattices of the film and the substrate can induce large stresses in the film. In order to release such energy the atoms of the film move from their crystalline equilibrium to reach more favorable arrangements \cite{FG}.

Despite the applications of the Young-Dupr\'e law to elastic solids, a mathematical justification in the context of Continuum Mechanics seems to be missing in the Literature.
In this regard we refer the reader to \cite{SD} for a discussion on whether the presence of stresses modifies contact angles or not. In this paper we provide such mathematical validation in \emph{linear elasticity} in the context of thin films starting from the models introduced \MMM in \cite{davoli.piovano}. \EEE  Among our results we in particular find that the classical contact angles determined by the Young-Dupr\'e law are not impacted by the singular elastic fields present at the wedges of the contact corners. 



The purpose of this paper is therefore \MMM twofold. First, we show that optimal thin-film profiles of the variational heteroepitaxial thin-film model identified in \cite{davoli.piovano} (see \eqref{filmenergyfinal}) \EEE satisfy the Young-Dupr\'e law for angles $\theta\in[0,\pi/2]$ (see  Theorem \ref{thm:YDlaw}). \MMM Second, \EEE in Theorem \ref{thm:regularity} the regularity of the profile of minimizing configurations is assessed.


\subsection{The Young-Dupr\'e law in Fluid Mechanics}  
\label{subsec: FluidMechanics}  
The first formulation of the law dates back to 1805 and is due to Thomas Young \cite{Y} who derived it by computing the mechanical equilibrium of drops resting on planar surfaces under the action of the surface tensions $\gamma_f$, $\gamma_s$, $\gamma_{fs}$  of the three involved interfaces, respectively, the drop/gas interface, the substrate/gas interface, and the drop/substrate interface. Notice that we use here the subscript $f$ because in our setting drops coincide with film material.  Subsequently in 1869 a zero-angle condition for the case in which $\gamma_f\leq\gamma_s-\gamma_{fs}$ (also called \emph{wetting criterion} in \cite{Sp}) has been included in the relation by Anthanase Dupr\'e and Paul Dupr\'e (see \cite{DD}). A formulation of the law that includes both the contributions of \cite{Y} and \cite{DD} is 
  \begin{equation}\label{newyoung}
 \cos\theta=\frac{\min\{\gamma_f,\gamma_s-\gamma_{fs}\}}{\gamma_f},
\end{equation}
where  $\theta$ is the contact angle between planar surface of the substrate and the film profile (see Figure \ref{contactangle}). 

  \begin{figure}[htp] 
\begin{center}
\includegraphics[scale=0.1]{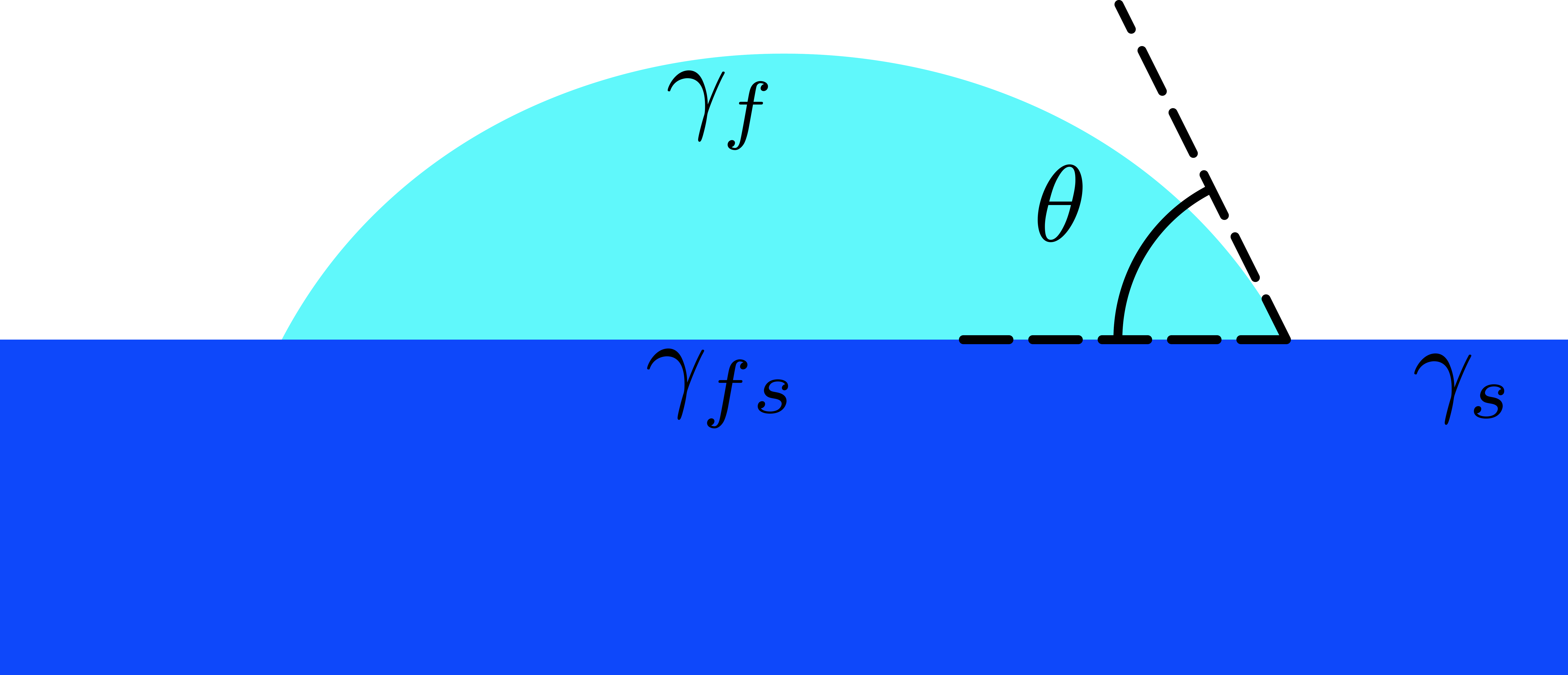}
\caption{Contact angle of a sessile drop.}
\label{contactangle}
\end{center}
\end{figure} 
In 1877 Carl Friedrich Gauss \cite{Gauss}  introduced a free energy consisting of four terms: a free surface energy related to the boundary of the drop detached from the substrate, a wetting energy accounting for the adhesion of the drop to the supporting surface and depending of an adhesion coefficient $\sigma$, a gravitational energy, and a Lagrange multiplier to include a  constraint on the volume of the drop. 
We recall that a law which includes this adhesion coefficient $\sigma$ has been formulated by Pierre Simon Laplace in \cite{Laplace} also starting from the ideas in \cite{Y}. This law which is often referred to as  \emph{Young-Laplace} law in the context of \emph{capillarity problems}, i.e., problems related to fluids in containers, can be stated as $\nu_D\cdot\nu_C=\sigma$, where $\nu_D$ and $\nu_C$ are the exterior normal to the drop and the container, respectively. We observe that \eqref{newyoung} is equivalent to the Young-Laplace law when $-\sigma$ corresponds to the right-hand side of \eqref{newyoung}.

However, also the results in the Literature related to the Young-Laplace law seem not to include elasticity. In particular, in \cite{CF} the authors prove that, if $\sigma\in(-1,0)$, than the detached boundary of the minimizing drops of the Gauss free energy is the graph of a function describing the thickness of the drop. This result, although in a different context, is in accordance with our analysis. In fact we assume that the admissible film profiles are \emph{graphs of height functions} and the conditions that we need to impose on $\gamma_f$, $\gamma_s$, $\gamma_{fs}$ are such that the right-hand side of \eqref{newyoung} belongs to $[0,1]$ (where, though, the boundary values can be included in our analysis). For more general conditions on the adhesion coefficient $\sigma$ we refer the reader to \cite{Baer} and \cite{DFM}, where every set of finite perimeter is an admissible drop, and the boundary regularity of optimal drops is studied also in the presence of anisotropy.

\subsection{The thin-film model} \label{subsec:model} 
The first rigorous validation of a thin-film \MMM energy \EEE as $\Gamma$-limit of the transition-layer model of \cite{S2} was performed in the seminal paper \cite{FFLM2}. \MMM In the companion paper \cite{davoli.piovano} we have provided a variational model taking \EEE into account the (possible) different elastic properties of the film and substrate materials, \MMM and thus particularly apt to describe \EEE \emph{heteroepitaxy}, i.e., the deposition of a material different from the one of the substrate. 

In order to describe \MMM the model identified in \cite{davoli.piovano}, \EEE we need to introduce some notation. \MMM As in the seminal paper \EEE \cite{S2} we \MMM model \EEE substrate and the film as continua, \MMM work in the setting \EEE of \emph{linear elasticity}, and \MMM consider \EEE two-dimensional profiles (or three-dimensional configurations with planar symmetry). \MMM We assume that interface between film and substrate is a subset of \EEE the $x$-axis, \MMM and we denote by the height function $h:[a,b]\to[0,+\infty)$ with $b>a>0$ the film thickness. The region occupied by the film and the substrate material is described by the subgraph of $h$, namely by the set \EEE $$\Omega_h:=\{(x,y):\,a<x<b,\,y<h(x)\},$$ \MMM whilst the film profile is encoded by the \emph{graph} \EEE
$$\Gamma_h:=\partial\Omega_h\cap\left((a,b)\times\R\right).$$
\MMM The \emph{material displacement} and its associated \emph{strain-tensor} are denoted by $u:\Omega_h\to \R^2$, and its symmetric part of the gradient, namely \EEE
$$Eu:={{\rm sym}}\nabla u,$$
\MMM respectively. As in \cite{FFLM2}, our description will include non-smooth profiles, and \EEE the height function \MMM will be \EEE assumed to be lower semicontinuous and with bounded pointwise variation. We \MMM will adopt the notation 
$${\tilde{\Gamma}_h}:=\partial\overline{\Omega}_h\cap\left((a,b)\times\R\right),$$
and $\Gamma_h^{cut}$ \MMM to identify \EEE the set of \emph{cuts} in the profile of $h$, namely $\Gamma_h^{cut}:={\Gamma}_h\setminus \tilde{\Gamma}_h$.

\MMM The lattice mismatch between the film and the substrate materials is known to induce large stresses and thus to play a major role in heteroepitaxy \cite{FG}. In our model the lattice mismatch is represented by means of a \EEE
parameter $e_0\geq 0$, \MMM and by the assumption that \EEE the minimum of the energy is \MMM attained \EEE at
$$E_0(y):=\begin{cases}e_0\, (\bf{e_1}\odot{\bf{e_1}})&\text{if }y\geq 0\\
0&\text{otherwise},
\end{cases}$$
where $({\bf e_1}, {\bf e_2})$ is the standard basis of $\R^2$. In the following we refer to $E_0$ as the \emph{mismatch strain}.  

The model considered in this paper, \MMM and rigorously validated in \cite{davoli.piovano}, is the \EEE energy functional $\mathcal{F}$, defined for any film configuration $(u,h)$ as
\begin{align}\label{filmenergyfinal}
 \mathcal{F}(u,h)\,=\,\int_{\Omega_h}W_{0}(y,&Eu(x,y)-E_0(y))\,dx\,dy\nonumber\\
&\qquad+\,\int_{\tilde{\Gamma}_h}\varphi(y)\,d\HH^1\,+\, \gamma_{\rm fs}(b-a)\,+\,2\gamma_{\rm f} \HH^1(\Gamma_h^{cut}),
\end{align}
 where the surface density $\varphi$ is given by
$$\varphi(y):=\begin{cases}\gamma_f&\text{if }y> 0,\\
 \min\{\gamma_f, \gamma_s-\gamma_{fs}\}&\text{otherwise,}\end{cases}$$
with 
\be{eq:ass-gammas}
\gamma_f>0,\quad \gamma_s>0,\quad\text{and}\quad\gamma_s-\gamma_{fs}\geq 0.
\ee
\MMM In the expression above, \EEE the elastic energy density \mbox{$W_0:\R\times \mtwo_{\rm sym}\to [0,+\infty)$} \MMM is defined as \EEE 
$$W_0(y,E):=\frac12 E:\C(y)E$$
for every $(y,E)\in \R\times \mtwo_{\rm sym}$,
\MMM where \EEE $\C(y)$ \MMM is \EEE the elasticity tensor, \MMM satisfying \EEE
$$\C(y):=\begin{cases} \C_{f}&\text{if }y>0,\\
\C_s&\text{otherwise},
\end{cases}$$
and \MMM such that \EEE
\be{eq:positive definite}E:\C(y)E>0\ee
for every $y\in\R$ and $E\in\mtwo_{\rm sym}$.
 The fourth-order tensors $\C_f$ and $\C_s$ are symmetric and positive-definite, and we allow them to be possibly different to include the case of a different elastic behavior  for  the film and the substrate. 
 An alternative formulation of heteroepitaxy is to consider $E_0\equiv 0$ in \eqref{filmenergyfinal}, and to impose a \emph{transmission Dirichlet condition} at the interface between film and substrate. We refer to Remark \ref{rk:equivalent} to see that the two corresponding minimum problems are equivalent. \MMM As already highlighted in \cite{davoli.piovano}, \EEE energy functionals of the form \eqref{filmenergyfinal} represent the \emph{competition} between the \emph{roughening effect} of the elastic energy and the \emph{regularizing effect} of the surface energy that characterize the formation of such crystal microstructures (see \cite{FG,Ga,Gr} and \cite{FFLMi} for the related problem of crystal cavities) \MMM and are thus related to \EEE the study of \emph{Stress-Driven Rearrangement Instabilities  (SDRI)} \cite{Gr}. 
 \MMM We refer to \cite{davoli.piovano} and the references therein for an overview on the Literature on variational models in epitaxy and on related SDRI models. \EEE As already mentioned at the beginning of this subsection a similar functional to \eqref{filmenergyfinal} was derived  in \cite{FFLM2}  by $\Gamma$-convergence from the transition-layer model introduced in \cite{S2} in the case in which $\C_f=\C_s$, and $\gamma_{fs}=0$. We observe here that in \cite{FFLM2} the regularity of the local minimizers of such energy is studied for isotropic film and substrate in the case in which $\gamma_f\leq\gamma_s$, and the local minimizers  are shown to be smooth outside of finitely many \emph{cusps} and \emph{cuts} and to form zero contact angles with the substrate (see also \cite{BC,FFLMi}). We point out that the functional in \cite{FFLM2}, when restricted to the regime $\gamma_f\leq\gamma_s$ did not present any discontinuity along the film/substrate interface contained in the $x$-axis. The same applies for the energy in \cite{FM}. In our more general setting, instead, \eqref{filmenergyfinal}  always presents a sharp discontinuity with respect to the elastic tensors. Additionally the geometrical and regularity results of this paper include the \emph{dewetting regime}, $\gamma_f>\gamma_s-\gamma_{fs}$, for which the surface tension is also discontinuous. 

\subsection{Wettability and growth modes} The importance of determining on which parameters contact angles in epitaxial growth depend, and of precisely characterizing their amplitude, resides on the need in applications to control the film adherence to substrates. The film adherence, that depends on the chemical interactions between the constituents of the two materials, is also referred to as  \emph{film wettability}. 

Zero contact angles correspond to complete wetting that occurs when an infinitesimal thin layer of film atoms, the \emph{wetting layer}, spreads freely on the substrate and covers it. Positive angles instead represent the so called situation of nonspreading films, in which the substrate is partially exposed \cite{Zisman}. Since contact angles  represent the degree of the wettability of the film, they are in general also called  \emph{wetting angles}.

It is exactly because of their various possible morphologies and wettability properties that thin films play nowadays a key role in an ever-growing number of technologies which range from optoelectronics to semiconductor devices, and from solid oxide fuel/hydrolysis cells to photovoltaic devices. In fact, different modes of growth relate to different film wettability: \emph{Volmer-Weber} (VW) mode, in which separated islands form on top of the substrate, or situations in which the substrate is  completely covered such as in the \emph{Frank-van der Merwe} (FM) and \emph{Stranski-Krastanov} (SK) modes.  FM and SK differ as FM consists in a layer-by-layer  growth (next level starting only upon completion of previous layers), while SK presents islands which are nucleated on top of a wetting layer \cite{Pohl}.

Therefore, a large effort has been played at the engineering stage to improve the accuracy with which the resulting processed films correspond to the designed geometries.
Any advancement in the modeling that improves the engineering of pre-determined profile shapes has therefore a direct economical impact as it contributes to saving computational time needed for simulations, and to reducing the waste of material used in the current work-intensive and expensive trial-and-error production.
We notice here that as a byproduct of our analysis, we also deduce that the VW thin-film mode (which corresponds to a positive wetting angle)  is exhibited if and only if $\gamma_f>\gamma_s-\gamma_{fs}$.

\subsection{Organization of the paper and methodology}

The paper is organized as follows.  In Section \ref{sec:mainresults} we introduce the mathematical setting and we rigorously state our main results (see Theorems \ref{thm:YDlaw}, and  \ref{thm:regularity}).

In  Section \ref{sec:regularity}, \MMM starting from the preliminary regularity results proved in \cite{davoli.piovano}, \EEE we develop a novel strategy for deriving contact-angle conditions. The originality of the method consists in implementing in our thin-film setting some ideas used for  \emph{transmission problems}, that rely on a \emph{decomposition formula} established in \cite{NS2}, as well as on the properties of the \emph{Mellin transform} and of the \emph{operator pencil}  (see \cite{NS}). In particular, by using the results in \cite{KS} we prove in Proposition \ref{thm:blow} a \emph{decay estimate} for the displacements corresponding to local minimizers of $\mathcal{F}$. 

In Section  \ref{sec:YD}, in view of Proposition \ref{thm:blow} we are able to perform a blow-up argument at the film/substrate contact points and to pass to the limit in the Euler equation satisfied by local minimizers (by considering variations only with respect to the profile functions). Among the contact points $Z_h$ of minimal profiles $h$ we distinguish the isolated ones  from the extrema of non-degenerate intervals in $Z_h$, and  we refer to the first as \emph{valleys} and to the latter as \emph{island borders}.  Careful choices of suitable competitors  for the minimal profile functions with respect to the different cases of valleys and island borders allow in Proposition \ref{pro:YDlawold} to identify corresponding contact-angle conditions. In particular the conditions proved in  Proposition \ref{pro:YDlawold} include the Young-Dupr\'e law for the \emph{wetting regime}, $\gamma_f\leq\gamma_s-\gamma_{fs}$. For the  \emph{dewetting regime}, $\gamma_f>\gamma_s-\gamma_{fs}$, the Young-Dupr\'e law  is obtained in Theorem \ref{thm:YDlaw} by a further comparison argument, that shows that angles smaller than the one characterized in \eqref{newyoung} are not energetically convenient in this regime. As a byproduct of our results we also obtain that in the dewetting regime there are no valleys, and hence, that islands are separated.

Finally, in Section \ref{sec:analiticity}  an adaptation of the proof strategy of Theorem \ref{thm:YDlaw},  together with improved decay estimates along the lines of \cite{FFLM2},  allow to reach in Theorem \ref{thm:regularity} the final regularity results for local minimizers. \newpage

\section{Main results}\label{sec:mainresults}

\subsection{Mathematical setting} \label{subset:mathset} \MMM We recall in this subsection the main definitions and the notation used throughout this paper and in \cite{davoli.piovano}. \EEE 
We begin by characterizing the admissible film profiles. The set  $AP$  of admissible film profiles in $(a,b)$ is denoted by
$$AP(a,b):=\{h: [a,b]\to[0,+\infty)\,:\,\textrm{$h$ is lower semicontinuous and $\textrm{Var}\,h<+\infty$}\},$$
where $\textrm{Var}\,h$ denotes the pointwise variation of $h$, namely,
\begin{align*}
\textrm{Var}\,h :=\sup\Big\{  &\sum^n_{i=1}|h(x_i)-h(x_{i-1})|\,:\\
&\qquad \qquad\textrm{$P:=\{x_1,\dots,x_n\}$ is a partition of $[a,b]$} \Big\}.
\end{align*}
We recall that for every lower semicontinuous function $h: [a,b]\to[0,+\infty)$, to have finite pointwise variation is equivalent to the condition 
$$\HH^1(\Gamma_h)<+\infty,$$
where
$$\Gamma_h:=\partial\Omega_h\cap\left((a,b)\times\R\right).$$
 For every  $h\in AP(a,b)$, and for every $x\in (a,b)$, consider the left and right limits
$$ h(x^\pm):=\lim_{z\to x^{\pm}} h(z),$$
we define
$$h^-(x):=\min\{h(x^+),h(x^-)\}=\liminf_{z\to x} h(z),$$
and 
$$h^+(x):=\max\{h(x^+),h(x^-)\}=\limsup_{z\to x} h(z).$$
In the following  ${\rm Int}(A)$ denotes the interior part of a set $A$.
Let us now recall some properties of height functions $h\in AP(a,b)$, regarding their graphs $\Gamma_h$, their subgraphs $\Omega_h$, the film and the substrate parts of the subgraph,
$$\Omega_h^+:=\Omega_h\cap\{y>0\}$$
and
$$ \Omega_h^-:=\Omega_h\cap\{y\leq 0\}$$ respectively, and the sets 
\be{eq:Gamma-tilda}\tilde{\Gamma}_h:=\partial\bar{\Omega}_h\cap ((a,b)\times \R).\ee
Any $h\in AP(a,b)$ satisfies the following assertions (see \cite[Lemma 2.1]{FFLM2}):
\begin{enumerate}
\item[1.] $\Omega_h^+$ has finite perimeter in $((a,b)\times \R)$,
\item[2.]  $\Gamma_h=\{(x,y):\,a<x<b,\,h(x)<y<h^+(x)\}$,
\item[3.]  $h^-$ is lower semicontinuous and ${\rm Int}\left(\overline{\Omega}\right)=\{(x,y):\,a<x<b,\, y<h^-(x)\}$,
\item[4.]  $\tilde{\Gamma}_h=\{(x,y):\,a<x<b,\,h^-(x)\leq y\leq h^+(x)\}$,
\item[5.]  $\Gamma_h$ and $\tilde{\Gamma}_h$ are connected.
\end{enumerate}

We now characterize various portions of $\Gamma_h$. To this aim we denote the \emph{jump} set of a function $h\in AP(a,b)$, i.e., the set of its profile discontinuities, by
\be{eq:def-S-h}J(h):=\{x\in (a,b):\,h^-(x)\neq h^+(x)\},\ee
whereas the set identifying \emph{vertical cuts} in the graph of $h$ is given by
\be{eq:def-S}C(h):=\{x\in (a,b):\,h(x)<h^-(x)\}.\ee
The graph $\Gamma_h$ of a height function $h$ is then characterized by the decomposition 
$$\Gamma_h=\Gamma_h^{jump}\sqcup\Gamma_h^{cut}\sqcup\Gamma_h^{graph},$$
where $\sqcup$ denotes the disjoint union, and 
\begin{align}
\nn\Gamma_h^{jump}&:=\overline{\{(x,y):\,x\in (a,b)\cap J(h),\, h^-(x)\leq y\leq h^+(x)\}},\\
\Gamma_h^{cut}&:=\{(x,y):\,x\in (a,b)\cap C(h),\, h(x)\leq y< h^-(x)\},\label{eq:cuts}\\
\nn\Gamma_h^{graph}&:=\Gamma_h\setminus(\Gamma_h^{jump}\cup\Gamma_h^{cut}). 
\end{align}

 \begin{figure}[htp]
\begin{center}
\includegraphics[width=12.5cm]{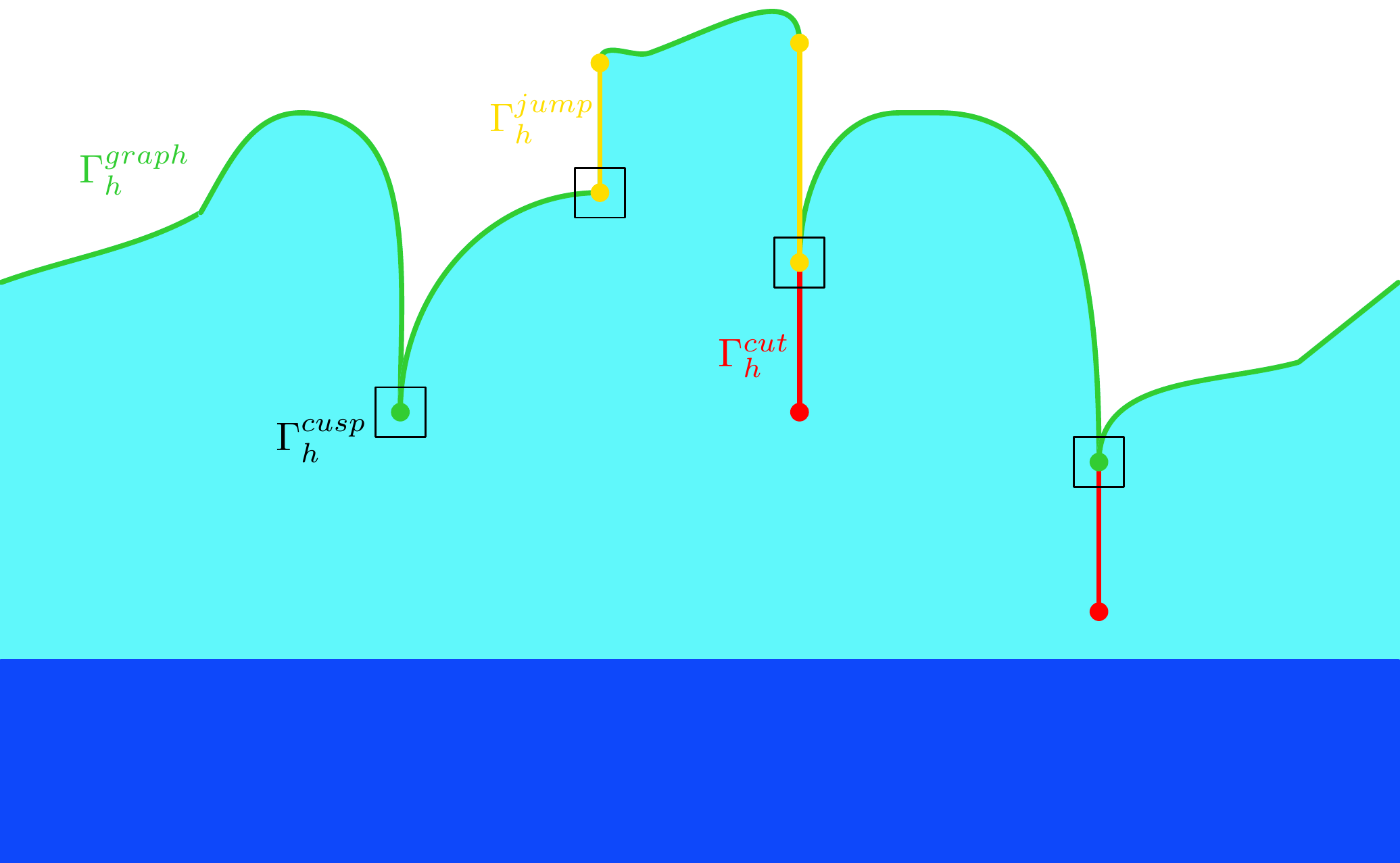}
\caption{In the figure above an admissible profile function $h$ is displayed. The portions of $\Gamma_h$ corresponding to $\Gamma_h^{graph}$, $\Gamma_h^{jump}$,  and $\Gamma_h^{cut}$  are represented with the colors green, yellow, and red respectively. The points in $\Gamma_h^{cusp}$ are marked by enclosing them within squares.}
\label{graphs}
\end{center}
\end{figure}
We observe that  $\Gamma_h^{graph}$ represents the regular part of the graph of $h$, whilst both $\Gamma_h^{jump}$ and $\Gamma_h^{cut}$ consist in (at most countable) unions of segments, corresponding to the \emph{jumps} and the \emph{cuts} in the graph of $h$, respectively (see Figure \ref{graphs}). Notice also that
$$\Gamma_h=\tilde{\Gamma}_h\sqcup \Gamma_h^{cut}.$$

Let us also identify  the set of \emph{cusps} in $\Gamma_h$ by
\begin{align*}
\Gamma_h^{cusp}:=\big\{ (x,h^-(x))\,:\,\,&\textrm{either $x\in J(h)$}\\
 &\textrm{or we have that $x\not\in J(h)$ with $h'_+(x)=+\infty$ or $h'_-(x)=-\infty$}\big\}
\end{align*}
(see Figure \ref{graphs}). 

For every $h\in AP(a,b)$ we indicate its set of of zeros by
$$Z_h:=\Gamma_h\cap\{x\in[a,b]\,:\,h(x)=0\}.$$
For every $x\in Z_h$, let $\theta^{\pm}(x)$ be the internal angles, with amplitude smaller or equal to $\frac{\pi}{2}$, between the $x$-axis and the tangents to $\Gamma_h$ in $(x,0)$ from the left and from the right, with slopes $h'_{-}(x)$ and $h'_{+}(x)$, respectively.
Consider the set 
$$I_h:=\{(c,d)\subset Z_h:\,\textrm{$c<d$ and $c,d\not\in {\rm Int}(Z_h)$}\},$$
and let
$$P_h:=Z_h\setminus \bigcup_{(c,d)\in I_h}[c,d].$$
\begin{figure}[htp]
\begin{center}
\includegraphics[width=11 cm]{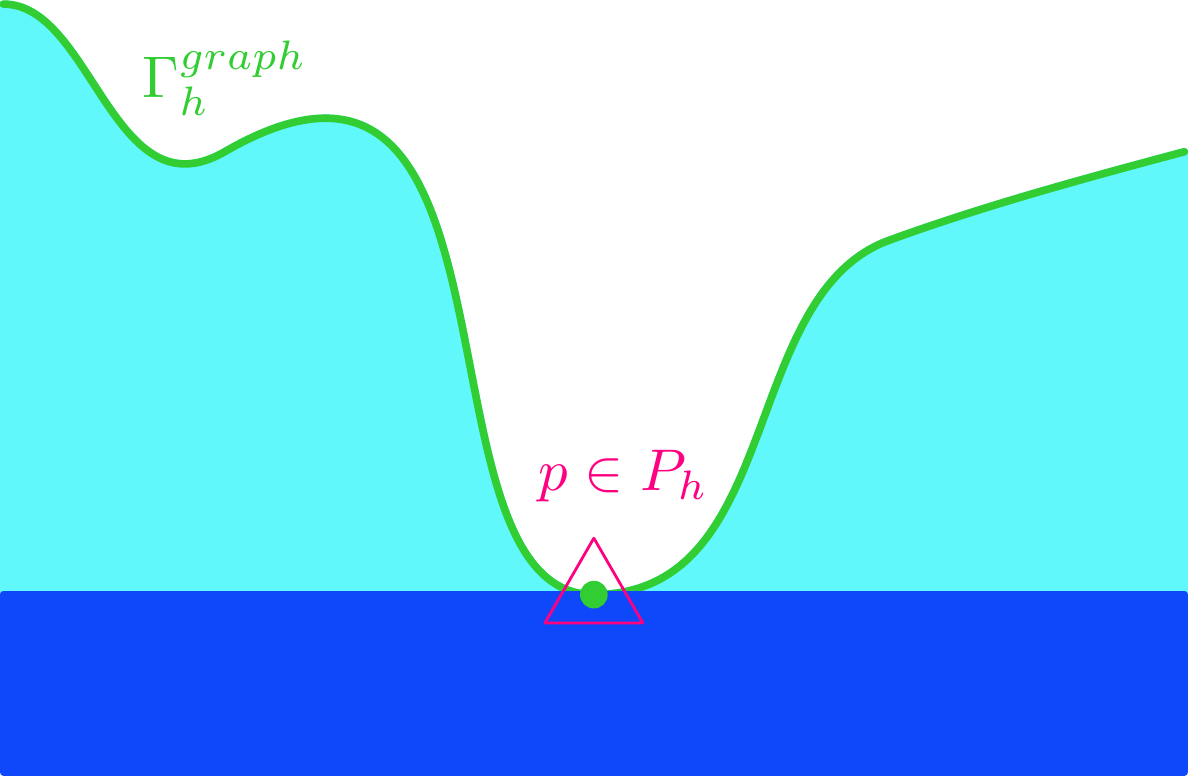}
\caption{A valley at an isolated point $p\in P_h$ is displayed. The point $p$ is indicated by enclosing it in a pink triangle}
\label{ph}
\end{center}
\end{figure}
\begin{figure}[htp]
\begin{center}
\includegraphics[width=11 cm]{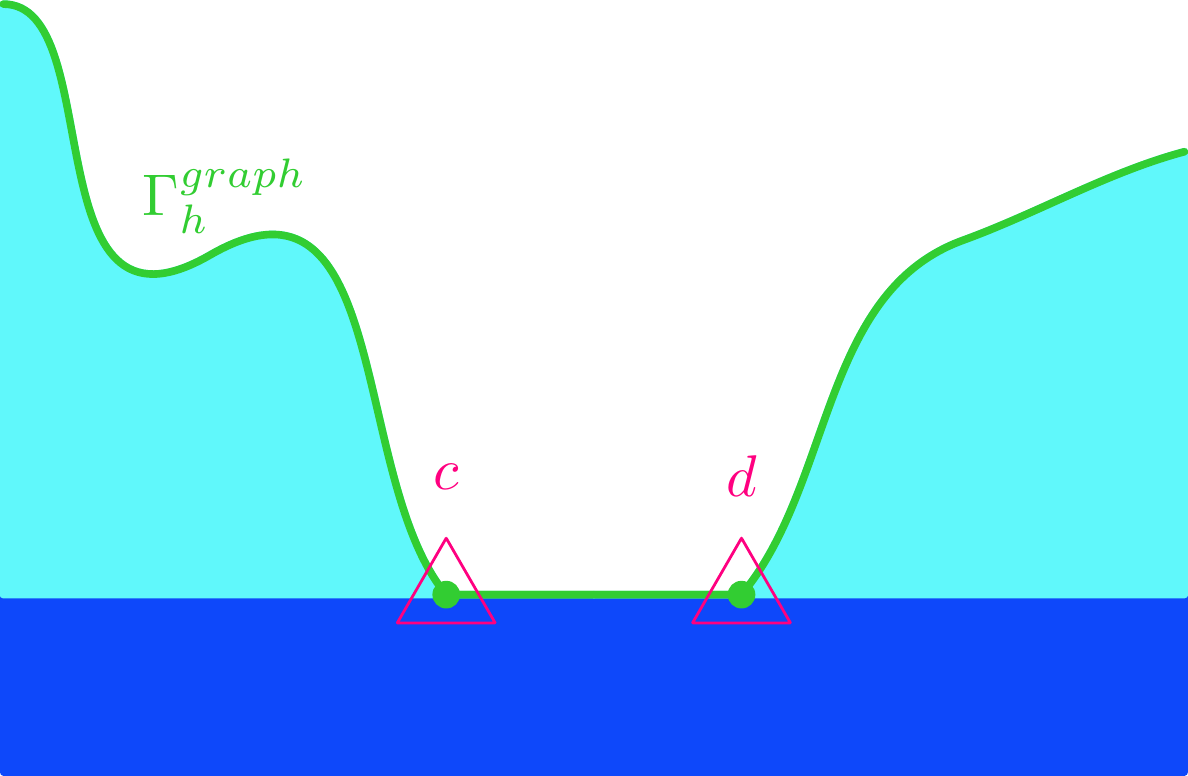}
\caption{An interval $(c,d)\in I_h$ is displayed. The points $c,d$ indicated with pink triangles are the only ones in $I_h$ with non-trivial contact angles.}
\label{ih}
\end{center}
\end{figure}

We will refer to the endpoints $c$ and $d$ of any interval $(c,d)\in I_h$ as \emph{borders of} (\emph{two different}) \emph{islands} and to the points in $P_h$ as \emph{valleys}, and we observe that 
\be{eq:s8star}
\begin{cases} 
\theta^{-}(x)=0&\text{for every }x\in (c,d]\\
\theta^{+}(x)=0&\text{for every }x\in [c,d)
\end{cases}
\ee
(see Figures \ref{ph} and \ref{ih}).

We now define the family $X$ of admissible film configurations as
$$X:=\{(u,h):\,\textrm{$u\in H^1_{\rm loc}(\Omega_h;\R^2)$ and $h\in AP(a,b)$}\}$$ 
and we endow $X$ with the following notion of convergence.

\begin{definition}
\label{def:conv-X}
We say that a sequence $\{(u_n,h_n)\}\subset X$ converges to $(u,h)\in X$, and we write $(u_n,h_n)\to (u,h)$ in $X$ if
\begin{enumerate}
\item[1.] $\sup_n \Var\, h_n<+\infty$,
\item[2.] $\R^2\setminus\Omega_{h_n}$ converges to $\R^2\setminus\Omega_h$ in the Hausdorff metric,
\item[3.] $u_n\wk u$ weakly in $H^1(\Omega';\R^2)$ for every $\Omega'\subset\subset\Omega_h$.
\end{enumerate}
\end{definition}
Let us also consider the following subfamily $X_{\rm Lip}$ in $X$ of configurations with Lipschitz profiles, namely,
$$X_{\rm Lip}:=\{(u,h):\,u\in H^1_{\rm loc}(\Omega_h;\R^2),\,h\text{ is Lipschitz}\}.$$
We recall from Subsection \ref{subsec:model} that the thin-film model analyzed in this paper is characterized by the energy $\mathcal{F}$ defined by \eqref{filmenergyfinal} on configurations $(u,h)\in X$.

We state here the definition of \emph{$\mu$-local minimizers} of the energy $\mathcal{F}$. 
\begin{definition}
\label{def:local-min}
We say that a pair $(u,h)\in X$ is a $\mu$-local minimizer of the functional $\mathcal{F}$ if $\mathcal{F}(u,h)<+\infty$ and there exists $\mu>0$ such that
$$\mathcal{F}(u,h)\leq \mathcal{F}(v, g)$$
for every $(v,g)\in X$ satisfying $|\Omega_g^+|=|\Omega_h^+|$ and $|\Omega_g\Delta \Omega_h|\leq \mu$.
\end{definition}
\noindent Note that every global minimizer (with or without volume constraint) is a $\mu$-local minimizer.

\subsection{Statement of the main results} The paper contains \MMM two \EEE main theorems. \MMM Consider \EEE the situation in which $\C_f$ and $\C_s$ are the elasticity tensors of isotropic materials with Lam\'e coefficients $\mu_f$, $\lambda_f$, and $\mu_s$, $\lambda_s$, respectively.

Our \MMM first \EEE result regards the identification of contact angle conditions for the $\mu$-local minimizers $(u,h)\in X$ of  $\mathcal{F}$.

\begin{theorem}[Contact-angle conditions]
\label{thm:YDlaw} Assume that the  Lam\'e coefficients of the film and the substrate satisfy 
\be{monotonicity}\mu_s\geq \mu_f>0\quad\text{and}\quad\mu_s+\lambda_s\geq\mu_f+\lambda_f>0.
\ee

Then, every $\mu$-local minimizer $(u,h)\in X$ of $\mathcal{F}$ satisfies the following properties:
\begin{enumerate}
\item[1.] For every $p,c,d\in Z_h\setminus(\Gamma_h^{cusp}\cup\Gamma_h^{cut})$ such that $p\in P_h$ and $(c,d)\in I_h$ we have
$$\theta^{-}(p)=\theta^{+}(p)=\theta^{-}(c)=\theta^+(d)=\arccos(\beta),$$
where 
\be{eq:beta}
\beta:=\frac{\min\{\gamma_f,\gamma_s-\gamma_{fs}\}}{\gamma_f}.
\ee
\item[2.] If $\beta<1$, then $P_h\setminus(\Gamma_h^{cusp}\cup\Gamma_h^{cut})=\emptyset$.
\item[3.]  If $\beta\neq 0$, then  $\Gamma_h^{jump}\cap Z_h= \emptyset$.
\end{enumerate}
\end{theorem}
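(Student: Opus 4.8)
\medskip
\noindent\emph{Plan of proof.}
The plan is to reduce every assertion to a localized first--variation computation at a contact point $x_0\in Z_h\setminus(\Gamma_h^{cusp}\cup\Gamma_h^{cut})$, using the decay estimate of Proposition~\ref{thm:blow} to ensure that the elastic energy never contributes at leading order. At such an $x_0$ one has $h(x_0)=0$, $h$ continuous at $x_0$, and finite one--sided slopes with $h'_-(x_0)\le0\le h'_+(x_0)$; by the preliminary regularity in \cite{davoli.piovano} one may also assume that $h\in C^1$ on $\Gamma_h^{graph}$ near $x_0$ and that $\mathcal{F}$ satisfies the Euler--Lagrange equation $\gamma_f\kappa=W_0(\cdot,Eu-E_0)-\Lambda$ on $\Gamma_h^{graph}$, for a Lagrange multiplier $\Lambda\in\R$ enforcing the volume constraint. (That zeros of $h$ cannot accumulate at $x_0$ when $\beta<1$ follows from the curvature bound $\gamma_f|\kappa|\le C$ near $x_0$ and the continuity of $h$.)

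Here is how Proposition~\ref{thm:blow} is used. Each competitor $g$ below perturbs $h$ only inside a ball $B_r(x_0)$ by an amount $O(r)$, so $\Omega_g$ and $\Omega_h$ differ in a subset of $B_{Cr}(x_0)$ of area $O(r^2)$; in particular the volume--correction cost is $O(r^2)$. When $g\ge h$ one must extend $u$ across the newly covered part of the domain: a Korn-type argument together with the estimate $\int_{B_\rho(x_0)\cap\Omega_h}|Eu-E_0|^2\le C\rho^{2+2\alpha}$ (the content of Proposition~\ref{thm:blow}, for some $\alpha>0$; this is exactly where the Lam\'e monotonicity \eqref{monotonicity} enters, through the transmission/operator--pencil analysis) produces an $H^1$ extension with extra elastic energy $\le C(r^{2+2\alpha}+e_0^2r^2)=o(r)$; when $g\le h$ the elastic energy only decreases. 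Since $W_0(\cdot,Eu-E_0)$ is bounded near $x_0$, the ``curvature'' terms in the first variation are $O(r)$ after normalising the test profile. Dividing the resulting variational inequality by the relevant power of $r$ and letting $r\to0$, only the boundary terms of the surface energy at $x_0$ survive, and these carry the angle information.

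For assertion~(1): at an isolated zero $p$ of $h$ (a valley), $\varphi(h(\cdot))\equiv\gamma_f$ near $p$ both for $h$ and for $g=h+t\psi$ with $0\le\psi$ a bump at $p$, so no surface--density jump is produced and the surviving boundary term is $-\gamma_f(\sin\theta^-(p)+\sin\theta^+(p))\,\psi(p)\ge0$, forcing $\theta^-(p)=\theta^+(p)=0=\arccos\beta$ when $\beta=1$. At an island border $c$ (with $h\equiv0$ on $(c,d)$ and $h>0$ just to the left of $c$) I would use two perturbations. Retreating the film ($g=(h+t\psi)^+$ with $\psi\le0$ near $c$, which shrinks the domain, so no extension is needed) lets the surface density switch from $\gamma_f$ to $\beta\gamma_f$ exactly at the receding contact point, and the first variation yields $\cos\theta^-(c)\,(\beta-\cos\theta^-(c))\ge0$, i.e. $\theta^-(c)\ge\arccos\beta$ or $\theta^-(c)=\pi/2$. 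Advancing the film (covering a piece of $(c,d)$ by a slightly less steep arm, which enlarges the domain and uses the cheap extension above) yields $\cos\theta^-(c)\ge\beta$, i.e. $\theta^-(c)\le\arccos\beta$. Together these give $\cos\theta^-(c)=\beta$, and symmetrically $\cos\theta^+(d)=\beta$; the advancing inequality alone already gives $\theta^-(c)=0$ when $\beta=1$, which is the content of Proposition~\ref{pro:YDlawold}, while retreating is the additional comparison invoked for $\beta<1$.

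For assertions~(2)--(3): if $\beta<1$ and $p\in P_h\setminus(\Gamma_h^{cusp}\cup\Gamma_h^{cut})$, then by~(1) $h'_\pm(p)=0$, so $h(x)=o(|x-p|)$; flattening $h$ to the wetting layer on $(p-L,p+L)$ (with $g\equiv0$ there, which creates only two jumps of height $o(L)$) changes the surface energy by $\le2L\beta\gamma_f+o(L)-2L\gamma_f=2L\gamma_f(\beta-1)+o(L)<0$ for $L$ small, while the volume changes by $o(L)$ and the elastic energy does not increase --- contradicting minimality, hence~(2). Finally, if $\beta\ne0$ and $\Gamma_h^{jump}\cap Z_h\ni(x_0,y_0)$, a vertical segment of $\tilde\Gamma_h$ issues from $(x_0,0)$; replacing its lowest part of height $t$ together with the adjacent length $t\cot(\arccos\beta)$ of the $\{y=0\}$ portion of $\tilde\Gamma_h$ by the straight segment at angle $\arccos\beta$ joining them changes the surface energy by $\gamma_f t(\sqrt{1-\beta^2}-1)<0$ for $\beta<1$ (and by a strictly negative amount, using a shallow slope of horizontal run $\sqrt t$, when $\beta=1$), with $o(t)$ volume change and non-increasing elastic energy --- again a contradiction, proving~(3). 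The main obstacle throughout is that Proposition~\ref{thm:blow} only controls $Eu-E_0$ in an $L^2$--averaged sense on small balls, so the competitors --- above all the $H^1$--extension of $u$ across a freshly covered portion of substrate in the advancing competitor --- and the iterated limits ($t\to0$, then $r\to0$) must be set up so that every elastic and volume--correction contribution is of strictly lower order than the surface boundary terms carrying the angles.
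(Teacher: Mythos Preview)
Your overall strategy---show by localized comparison that the elastic and volume terms are lower order, so only surface boundary terms at the contact point survive---matches the paper's. But several of the inputs you invoke are not available at this stage, and this makes the argument circular.

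\textbf{Circularity.} You write that ``by the preliminary regularity in \cite{davoli.piovano} one may also assume that $h\in C^1$ on $\Gamma_h^{graph}$ near $x_0$ and that $\mathcal{F}$ satisfies the Euler--Lagrange equation $\gamma_f\kappa=W_0(\cdot,Eu-E_0)-\Lambda$''. Neither holds. The companion paper only gives the internal-ball condition and hence \emph{local Lipschitz} regularity of $\Gamma_h^{reg}$ (Propositions~\ref{thm:palla-interna} and~\ref{prop:loc-lip}); the $C^1$ (indeed $C^{1,\alpha}$) regularity and the pointwise Euler--Lagrange equation are the content of Theorem~\ref{thm:regularity}, whose proof \emph{uses} Theorem~\ref{thm:YDlaw}. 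Likewise, the claim ``$W_0(\cdot,Eu-E_0)$ is bounded near $x_0$'' is unjustified: Proposition~\ref{thm:blow} only gives the $L^2$ decay $\int_{B_r\cap\Omega_h}|\nabla u|^2\le Cr^{2\alpha}$ with $\tfrac12<\alpha<1$, not your $r^{2+2\alpha}$, and certainly no $L^\infty$ bound on $Eu$. The ``curvature bound $\gamma_f|\kappa|\le C$'' you use to exclude accumulation of zeros is therefore also unavailable.

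\textbf{Consequence for the variation step.} With only Lipschitz $h$, a genuine ``first variation'' producing clean corner terms such as $-\gamma_f(\sin\theta^-+\sin\theta^+)\psi(p)$ is not directly justified: integrating by parts the length functional requires $h\in C^2$ on each side of the corner. The paper avoids this by working with explicit \emph{finite} piecewise-linear competitors $\psi_\delta$ at scale $r_n$ (Proposition~\ref{pro:YDlawold}, Step~2): one rescales, uses that $h'_\pm(x_0+r_n\cdot)\to g'_\pm(0)$ by left/right continuity of the one-sided derivatives (a consequence of the ball condition), and passes to the limit by dominated convergence. The elastic term is handled by a dichotomy (Step~1): either $\lambda_n^2:=r_n^{-2}\int_{C(z_0,r_n)}|\nabla u|^2$ stays bounded, or $r_n\lambda_n^2\le Cr_n^{2\alpha-1}\to0$ by Proposition~\ref{thm:blow}. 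This blow-up scheme replaces your appeal to the Euler--Lagrange equation and to boundedness of $W_0$.

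Your competitors for (2) and (3) are in the right spirit and close to what the paper does, but note that (2) in the paper is obtained differently: Proposition~\ref{pro:YDlawold} first shows that at a valley at least one of $\theta^\pm(p)$ vanishes and the other is $\le\theta^*$, and then the main proof shows $\theta^-(x_0)\ge\theta^*$ at every contact point via the explicit retreating competitor $h_\ep$; for $\beta<1$ these two facts are incompatible at a valley, whence $P_h=\emptyset$. Your flattening argument for (2) would work, but only \emph{after} you have established $\theta^\pm(p)=0$---which in turn requires fixing the issues above.
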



We remark that  Theorem \ref{thm:YDlaw} is the analytical validation of  the Young-Dupr\'e law for angles not greater than $\pi/2$. Let us sum up here the possible scenarios for the wetting angles:
\begin{description}
\item[Wetting regime] For $\gamma_s-\gamma_{fs}\geq\gamma_f$  all contact angles of $\Gamma_h\setminus (\Gamma_h^{cut}\cup \Gamma_h^{cusp})$ are zero.
\item[Dewetting regime] For $\gamma_s-\gamma_{fs}<\gamma_f$ all nontrivial contact angles $\theta$ of points in $Z_h\setminus (\Gamma_h^{cut}\cup \Gamma_h^{cusp})$ are such that 
$$\cos\theta=\frac{\gamma_s-\gamma_{fs}}{\gamma_f}.$$
\end{description}
We stress that, in agreement with the Young-Dupr\'e law, jumps at island borders (see Figure \ref{contactangles1}) are excluded when $\min\{\gamma_f,\gamma_s-\gamma_{fs}\}/\gamma_f\not=0$.  
\begin{figure}[htp]
\begin{center}
\includegraphics[scale=0.8]{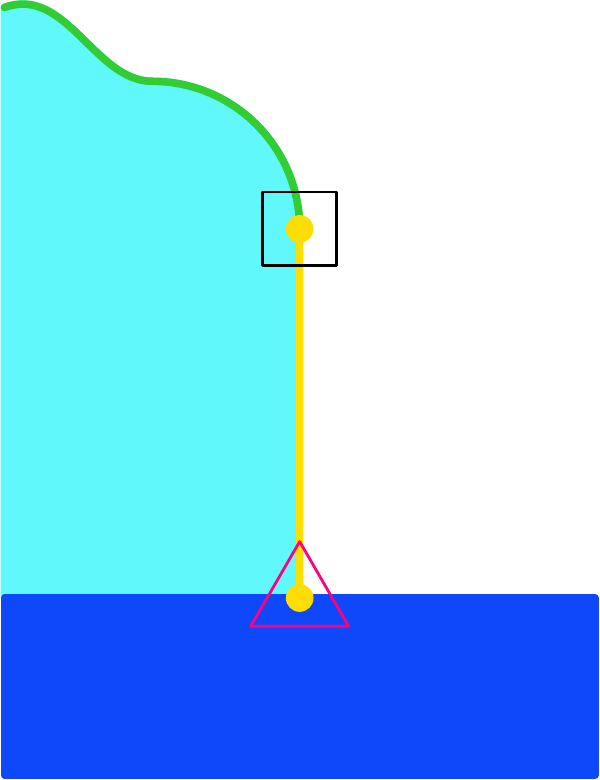}
\caption{Example of a jump at an island border (here indicated with a pink triangle). This is the only type of jump allowed by Theorem \ref{thm:YDlaw} and only if $\min\{\gamma_f,\gamma_s-\gamma_{fs}\}/\gamma_f=0$.}
\label{contactangles1}
\end{center}
\end{figure}
Note also that the contact angles at valleys are always zero (and there are no jumps at valleys), since valleys exist only for the wetting regime when $\beta=1$.

However, our analysis allows the set $D_h:=\left[\left(\Gamma_h^{cusp}\cup\Gamma_h^{cut}\right)\setminus\Gamma_h^{jump} \right]\cap Z_h$ to be nonempty. It seems though that this is not a restriction of our method but it is  in agreement with the experimental evidence. Points in $D_h$ may represent in fact \emph{dislocations} that are experimentally shown to form as a further mode of strain relief and to migrate at the film/substrate interface. We kindly refer the reader to \cite{FFLM4} and the reference therein for more details on dislocations in epitaxy and for a thin-film model accounting for their presence. Some examples of contact angles in $D_h$ are displayed in Figure \ref{contactangles2}.
\begin{figure}[htp]
\begin{center}
\includegraphics[width=5.7cm]{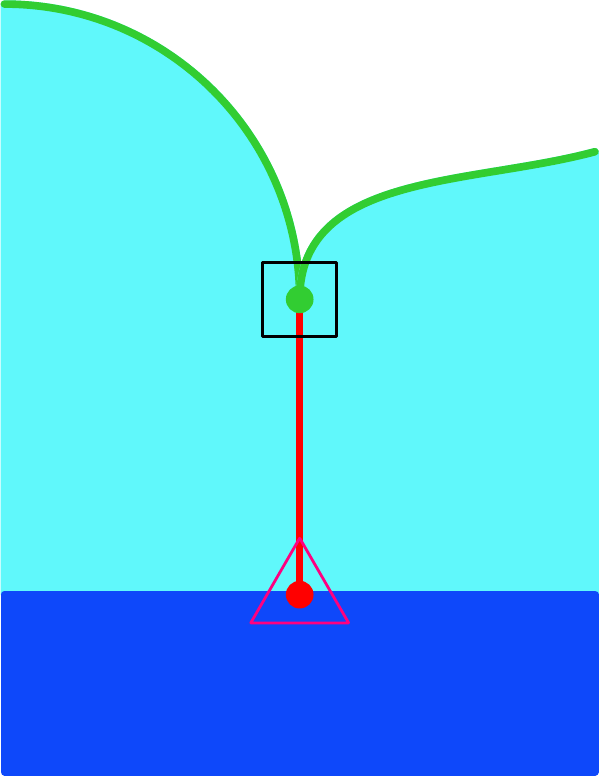}
\qquad
\includegraphics[width=5.7cm]{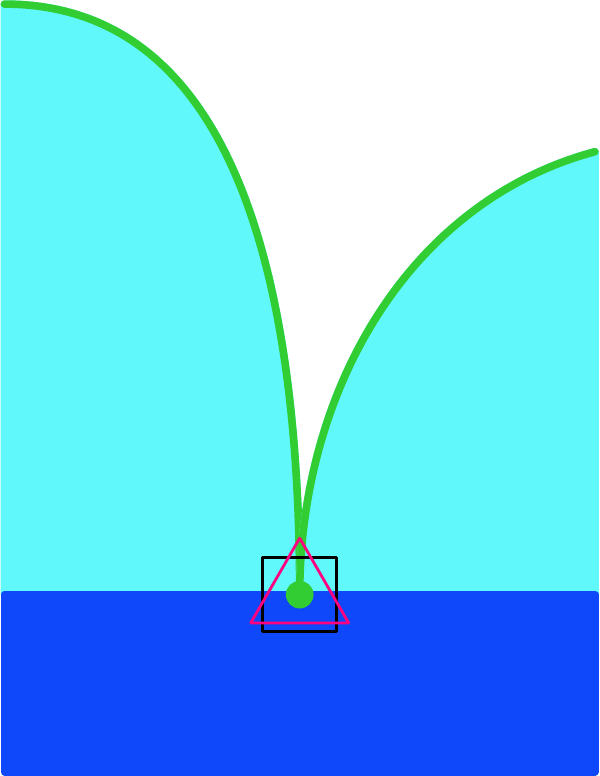}\\
\caption{Cuts (left)  and cusps (right) may represents dislocations at the film/substrate interface.}
\label{contactangles2}
\end{center}
\end{figure}

Regarding condition \eqref{monotonicity}, assuming $\mu_f,\,\mu_s>0,\,\lambda_f+\mu_f>0$, and $\lambda_s+\mu_s>0$ guarantees the ellipticity of the transmission problem associated to the Euler-Lagrange equations of $\mu$-local minimizers of $\mathcal{F}$ (see \cite[Lemma 1.3]{Knees-DiplomaThesis}). The assumption
\be{onlymonoticity}\mu_s\geq \mu_f\quad\text{and}\quad\mu_s+\lambda_s\geq\mu_f+\lambda_f
\ee
 is a \emph{quasi-monotonicity condition}. This kind of assumptions are classically considered in transmission problems for elliptic systems, we refer the reader  to  \cite{DSW} for the first formulation for transmission problems with the Laplace operator (see also \cite{Knees2002} and the references therein). As stated in \cite{KS} where \eqref{onlymonoticity} is introduced, ``it seems that the quasi-monotonicity condition $[ \dots]$ 
describes a class of composites which can sustain higher loads before breaking''. Furthermore, condition \eqref{onlymonoticity} implies that the \emph{shear} and the \emph{P-wave moduli} of the substrate are higher than those of the film. As such parameters are \emph{elastic moduli} for the materials, this entails that the substrate is stiffer than the film. Such requirement appears to be natural in the thin-film models here considered from \cite{S2}, where only the boundary of the film and not the boundary of the substrate is allowed to deform. We recall that in these models the film/substrate interface is forced to coincide with the $x$-axis. 
As a matter of fact, quasi-monotonicity conditions are strongly related to the particular geometry in which the transmission problem is considered, and in particular to the position of the transmission interface at boundary corners. Other conditions than \eqref{onlymonoticity} might be included if the film/substrate interface is not maintained fixed as in \cite{S2}. 

The final main theorem of the paper concerning the regularity of optimal profiles is the following.

\begin{theorem}[Regularity]
\label{thm:regularity} Assume that the  Lam\'e coefficients of the film and the substrate satisfy  \eqref{monotonicity}.

Then, every $\mu$-local minimizer $(u, h)\in X$ of $\mathcal{F}$ has the following regularity properties:
\begin{enumerate}
\item[1.] Cusps points and vertical cuts are at most finite;
\item[2.] $\Gamma_h^{reg}:=\Gamma_h\setminus (\Gamma_h^{cut}\cup \Gamma_h^{cusp})$
is locally the graph of a Lipschitz function;
\item[3.]  $\Gamma_h^{reg}\setminus Y_h$ is $C^{1,\alpha}$ for all $\alpha\in(0,1/2)$, where $Y_h$ is the subset of $Z_h\cap \Gamma_h^{reg}$ containing points with nonzero contact angles for $h$;
\item[4.] The set 
$$A_h:=\begin{cases}\Gamma_h^{reg}\setminus Z_h&\text{if }\C_f\not=\C_s\\
\Gamma_h^{reg}\setminus Y_h&\text{if }\C_f=\C_s
\end{cases}$$
 is analytic and satisfies the Euler-Lagrange equation
\be{eq:el-analyt-h}
\gamma_f k_{A_h}=\tau_{A_h}\,(W_0(\cdot, Eu(\cdot)- E_0))+\lambda_0\quad\text{on }A_h,
\ee
where the function $k_{A_h}(\cdot)$ denotes the curvature of  $A_h$, $\tau_{A_h}(\cdot)$ is the trace operator on $A_h$, and $\lambda_0$ is a suitable Lagrange multiplier.

\end{enumerate}
\end{theorem}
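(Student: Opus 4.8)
\emph{Proof plan.} The plan is to follow the blueprint of \cite{FFLM2}, starting from the preliminary regularity of $\mu$-local minimizers established in \cite{davoli.piovano}, from the decay estimate of Proposition~\ref{thm:blow}, and from the contact-angle rigidity of Theorem~\ref{thm:YDlaw}. The guiding principle is that, because the elastic energy decays on small balls (Proposition~\ref{thm:blow}, bootstrapped with the angle information as in \cite{FFLM2}), the subgraph $\Omega_h$ of a $\mu$-local minimizer behaves, away from cusps, cuts and the contact set, like a $(\Lambda,r_0)$-almost minimizer of the weighted length $\int_{\tilde{\Gamma}_h}\varphi\,d\HH^1+2\gamma_f\HH^1(\Gamma_h^{cut})$, with the bulk term acting as a lower-order volume perturbation. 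What is genuinely new with respect to \cite{FFLM2} is that here both $\varphi$ and the tensor $\C$ jump across $\{y=0\}$, so the analysis at the contact points must be run together with the operator-pencil and Mellin-transform estimates of Section~\ref{sec:regularity} and with the information, from Theorem~\ref{thm:YDlaw}, that contact angles never exceed $\pi/2$ and vanish in the wetting regime.

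First I would establish an $\varepsilon$-regularity statement: there exist $\varepsilon_0,r_0>0$ such that, if for some $z_0\in\Gamma_h$ and $r<r_0$ a scale-invariant excess $\mathcal{E}(z_0,r)$ — combining $\tfrac1r\int_{\Omega_h\cap B_r(z_0)}W_0(y,Eu-E_0)\,dx\,dy$ with the length excess of $\Gamma_h$ in $B_r(z_0)$ — satisfies $\mathcal{E}(z_0,r)<\varepsilon_0$, then $\Gamma_h\cap B_{r/2}(z_0)$ is the graph of a Lipschitz function with small Lipschitz constant, and in particular contains neither cusps nor cuts. This would be proven by a compactness/blow-up contradiction: rescaling at a scale $r_n\to0$ a sequence of minimizers with $\mathcal{E}(z_n,r_n)\le\varepsilon_0$ that violate the conclusion, one obtains in the limit — thanks to Proposition~\ref{thm:blow}, which forces the rescaled elastic energy to vanish — a nontrivial $\Lambda$-minimizer of the (weighted) length in a half-plane or in the whole plane, and such a minimizer is flat (a line, a half-line, or, when $\beta=0$, a doubled vertical segment), contradicting the lower bound on the excess. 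This has two consequences. First, at every $z_0\in\Gamma_h^{reg}$ one has $\mathcal{E}(z_0,r)\to0$ as $r\to0$: away from $Z_h$ this is immediate from the decay of the elastic energy and the graph structure, while at a contact point $z_0\in Z_h\cap\Gamma_h^{reg}$ it follows from Theorem~\ref{thm:YDlaw} — so that the blow-up of $\Gamma_h$ at $z_0$ is a wedge of half-opening $\le\pi/2$ — together with Proposition~\ref{thm:blow}; hence $\Gamma_h^{reg}$ is locally a Lipschitz graph, which is part~(2). Second, cusps and cuts can occur only where $\mathcal{E}(\cdot,r)\ge\varepsilon_0$ at all small scales, i.e. where the excess energy concentrates at a definite rate; as in \cite{FFLM2}, a covering argument resting on the finiteness of the total energy (and on the decay rates dictated by the pencil roots at the relevant singular configurations) shows that such points are finitely many, which is part~(1).

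The remaining parts I would obtain by a bootstrap on the Euler--Lagrange equation for the profile. On $\Gamma_h^{reg}\setminus Y_h$ the curve $\Gamma_h$ is, by part~(2), a Lipschitz graph with slope vanishing at the surviving points of $Z_h$, and the first variation of $\mathcal{F}$ under normal perturbations of the profile gives, in the weak sense,
$$\gamma_f\,k_{\Gamma_h}=\tau_{\Gamma_h}\!\bigl(W_0(\cdot,Eu-E_0)\bigr)+\lambda_0,$$
with $\lambda_0$ the multiplier of the volume constraint. Bootstrapping this equation — using the integrability and H\"older regularity of the elastic stress $Eu$ up to $\Gamma_h^{reg}\setminus Y_h$, with exponents dictated (under the quasi-monotonicity \eqref{monotonicity}) by the roots of the operator pencil of the Lam\'e transmission problem — one gets $h\in W^{2,q}$ for every $q<2$, hence $\Gamma_h^{reg}\setminus Y_h$ is $C^{1,\alpha}$ for every $\alpha<1/2$, which is part~(3), the threshold $1/2$ reflecting the behavior of the elastic fields at a zero-contact-angle point sitting on the tensor interface. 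On $A_h$ — which is obtained from $\Gamma_h^{reg}$ by removing all of $Z_h$ when $\C_f\neq\C_s$, and only the nonzero-angle corners $Y_h$ when $\C_f=\C_s$ — the domain and the boundary data are classically smooth, so the iteration, whereby $h\in C^{k,\alpha}$ forces $u\in C^{k,\alpha}$ up to $A_h$, hence $W_0(\cdot,Eu-E_0)\in C^{k,\alpha}$ on $A_h$, hence $h\in C^{k+1,\alpha}$, gives $h\in C^\infty$; analyticity then follows from the analytic-elliptic-regularity theory applied to the coupled system consisting of the Lam\'e equations for $u$ and the curvature equation for the arclength parametrization of $A_h$, with analytic coefficients and analytic affine boundary conditions, exactly as in \cite{FFLM2}. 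Finally, \eqref{eq:el-analyt-h} is the equation displayed above: the shape derivative of the bulk term of $\mathcal{F}$ reduces, via the elastic equilibrium of $u$ and the divergence theorem, to $\int_{A_h}W_0(\cdot,Eu-E_0)\,\zeta\,d\HH^1$, that of the surface term to $\gamma_f\int_{A_h}k_{A_h}\,\zeta\,d\HH^1$, and the volume constraint produces $\lambda_0\int_{A_h}\zeta\,d\HH^1$.

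The hard part is the $\varepsilon$-regularity/blow-up step at the contact points, where the free-boundary corner sits exactly on the $\{y=0\}$ interface across which both $\varphi$ and $\C$ jump: one must verify that Proposition~\ref{thm:blow}, once improved by means of the angle rigidity, does provide at such points a decay of the elastic energy fast enough that it cannot sustain an oscillation of the profile invisible to the surface energy, and this is precisely where the Mellin-transform and operator-pencil computations of Section~\ref{sec:regularity}, the quasi-monotonicity assumption \eqref{monotonicity}, and Theorem~\ref{thm:YDlaw} have to be combined. A related delicate point is reaching the full range $\alpha<1/2$ in part~(3), which is tied to the integrability exponent of $Eu$ near a zero-contact-angle point on the tensor interface, itself governed by the leading pencil root.
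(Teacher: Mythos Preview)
Your proposal is broadly correct in spirit for parts (3) and (4), but for parts (1) and (2) you take a substantially more elaborate route than the paper, and the covering argument you sketch for finiteness is not clearly sound.

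\textbf{Parts (1) and (2).} The paper does \emph{not} set up an $\varepsilon$-regularity theorem. Both assertions are obtained directly from the internal-ball condition (Proposition~\ref{thm:palla-interna}) and its consequence Proposition~\ref{prop:loc-lip}, which are imported from the companion paper \cite{davoli.piovano}. Proposition~\ref{prop:loc-lip} already says that around any $z_0\in\overline{\Gamma}_h$ the set $\Omega_h$ is either a Lipschitz subgraph (case~1) or a cusp/cut neighborhood (case~2); this immediately gives (2), and a straightforward compactness argument on $\overline{\Gamma}_h$ gives (1). Your $\varepsilon$-regularity scheme is not needed, and your claim that cusps and cuts are finite because ``excess energy concentrates at a definite rate'' plus a covering argument is not obviously correct: without the uniform internal ball you have no a priori lower bound on the excess at such points, and finiteness of the total energy alone does not exclude accumulation. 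You should simply invoke the internal-ball condition.

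\textbf{Part (3).} The paper's argument differs from your Euler--Lagrange bootstrap. It first shows that $\Gamma_h^{reg}\setminus Y_h$ is $C^1$ by rerunning the blow-up comparison of Proposition~\ref{pro:YDlawold} (Step~2, valleys) at points with $h(x_0)>0$, which forces the left and right derivatives to coincide. This $C^1$ information is then used to upgrade the decay estimate of Proposition~\ref{thm:blow} to the \emph{uniform} estimate $\int_{B(z_0,r)\cap\Omega_h}|\nabla u|^2\le C r^{2\sigma}$ for \emph{every} $\sigma\in(0,1)$, as in \cite{FFLM2}. From this one derives the length decay $\HH^1(\Gamma_h\cap B(z_0,r))\le C r^{2\sigma_0}$ via a perturbation/competitor argument (which, thanks to Theorem~\ref{thm:YDlaw}, only perturbs $h$ in $\{y>0\}$ on $Z_h\setminus Y_h$), and then concludes $C^{1,\alpha}$ for all $\alpha<1/2$ via Bonnet's argument \cite{Bonnet}. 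Your direct bootstrap of the weak curvature equation is a plausible alternative, but you would still need the $C^1$ step first to make the weak Euler--Lagrange equation usable, and you would need precisely the improved decay to control the trace of $W_0(\cdot,Eu-E_0)$.

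\textbf{Part (4).} Here your outline matches the paper: one follows \cite[Theorem~3.19]{FFLM2}. The paper stresses one subtlety you do not mention: when $\C_f\neq\C_s$, the approximation argument that would extend the classical Euler--Lagrange equation from $\Gamma_h^{reg}\setminus Z_h$ to $\Gamma_h^{reg}\setminus Y_h$ fails because of the transmission problem, which is why $A_h$ is defined differently in the two cases.
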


We also point out that for $\C_f=\C_s$ in view of Assertion 5. of Theorem \ref{thm:regularity} for every $\mu$-local minimizer $(u,h)$ of $\mathcal{F}$ the set $Z_h$ has either finite cardinality or nonempty interior in the $x$-axis. Finally, we observe that in the wetting regime for $\C_f=\C_s$ the analytic portion of the graph $A_h$ coincides with $\Gamma_h^{reg}$ since by the assertions 1. and 2. of Theorem \ref{thm:YDlaw} we have $Y_h=\emptyset$.
\begin{remark}
\label{rk:equivalent}
The results in Theorem \ref{thm:YDlaw} hold also for $\mu$-local minimizers of the energy
\begin{align}\label{eq:alternative}
\mathcal{E}(u^+,u^-,h)&:=\int_{\Omega_h^+}\C_f Eu^+(x,y):Eu^+(x,y)\,dx\,dy\\
&\nonumber\qquad+\int_{\Omega_h^-}\C_s Eu^-(x,y):Eu^-(x,y)\,dx\,dy+\int_{\Gamma_h}\varphi(y)\,d\HH^1\\
&\nonumber\qquad+\, \gamma_{fs}(b-a)+2\gamma_f\HH^1(\Gamma_h^{cut})
\end{align}
for every $(u^+,u^-,h)\in \tilde{X}$, where 
\begin{align*}
\nonumber \tilde{X}:=&\{(u^+,u^-,h):\,\textrm{$u^+\in H^1_{\rm loc}(\Omega_h^+;\R^2)$,\,$u^-\in H^1_{\rm loc}(\Omega_h^-;\R^2)$,}\\
&\qquad\qquad\,\textrm{$u^+(\cdot,0)-u^-(\cdot,0)=(e_0 \cdot,0)$, and $h\in AP(a,b)$}\}.
\end{align*}
In fact, there is a 1-1 correspondence between triples $(u^+,u^-,h)$ that are $\mu$-local minimizers of \eqref{eq:alternative}, and pairs $(u,h)$ which are $\mu$-local minimizers of \eqref{filmenergyfinal}, with
$$
u(x,y):=\begin{cases} u^+(x,y) - (e_0x,0) &\textrm{if $y\geq 0$}\\
u^-(x,y) &\textrm{if $y<0$}
\end{cases}
$$
for $(x,y)\in\Omega_h$. Energy functionals similar to \eqref{eq:alternative} are considered for the corresponding evolution problem (see, e.g., \cite{TS}).

\end{remark}


\section{Properties of local minimizers}\label{sec:regularity}

In this section we start analyzing the regularity of $\mu$-local minimizers $(u,h)$ of \eqref{filmenergyfinal}. In the first subsection we \MMM recall the results in \cite{davoli.piovano}, showing \EEE that optimal profiles $h$ satisfy the \emph{internal-ball condition}.  The second subsection is devoted to establish a decay estimate for the minimizing displacements $u$, and relies on some techniques introduced in the setting of transmission problems for elliptic systems (see \cite{Knees-DiplomaThesis, Knees2002, NS,NS2}).

\subsection{Internal-ball condition}\label{subsec:ball condition} 
\MMM In this subsection we collect some first regularity results for local minimizers. We refer to \cite{davoli.piovano} for the proofs of the next two propositions. The first observation is \EEE that the area constraint in the  minimization problem of Definition \ref{def:local-min} can be replaced with a suitable penalization in the energy functional. 

\begin{proposition}\label{prop:penalization}
Let $(u, h)\in X$ be a $\mu$-local minimizer for the functional $\mathcal{F}$. Then there exists $\lambda_0>0$ such that
\be{eq:penprob}
\mathcal{F}(u, h)=\min\left\{\mathcal{F}(v, g)+\lambda||\Omega_h^+|-|\Omega_g^+||:\,(v,g)\in X,\,|\Omega_g\Delta \Omega_h|\leq \frac{\mu}{2}\right\}
\ee
for all $\lambda\geq \lambda_0$.
\end{proposition}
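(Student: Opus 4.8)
I would prove \eqref{eq:penprob} by the by-now-standard penalization-via-contradiction scheme (cf.\ \cite{FFLM2}), the details in the present framework being those of \cite{davoli.piovano}. A useful preliminary remark is that, since $g\geq0$, one has $|\Omega_g^+|=\int_a^b g\,dx$, so the volume constraint is \emph{linear} in the profile. First I would check that for every $\lambda>0$ the right-hand side of \eqref{eq:penprob} is attained: along a minimizing sequence the surface part of $\mathcal F$ together with $\gamma_f>0$ (and $\Gamma_g\subset\{y\geq0\}$, since $g\geq0$) bounds the pointwise variation of the profiles, the elastic part together with the coercivity \eqref{eq:positive definite} of $\C$ bounds $Eu$ in $L^2$ on compact subsets, and Korn's inequality — modulo rigid motions, which leave $\mathcal F$ unchanged — bounds the displacements in $H^1_{\rm loc}$; hence one has convergence, up to subsequences, in the sense of Definition \ref{def:conv-X}, the equibounded variation gives $L^1$-convergence of the profiles, and along it $\mathcal F$ is lower semicontinuous and $g\mapsto|\Omega_g^+|$ is continuous (these last two from \cite{davoli.piovano}). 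Testing the right-hand side of \eqref{eq:penprob} with the admissible pair $(u,h)$ itself already gives ``$\leq\mathcal F(u,h)$''.

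For the reverse inequality I would argue by contradiction: if no $\lambda_0$ as in the statement existed, there would be $\lambda_j\to+\infty$ and minimizers $(v_j,g_j)$ of the right-hand side of \eqref{eq:penprob} (with $\lambda=\lambda_j$) satisfying $|\Omega_{g_j}\Delta\Omega_h|\leq\mu/2$ and $\mathcal F(v_j,g_j)+\lambda_j\sigma_j<\mathcal F(u,h)$, where $\sigma_j:=\big|\,|\Omega_h^+|-|\Omega_{g_j}^+|\,\big|$. Since $\mathcal F\geq0$, this forces $\sigma_j\to0$ and $\sup_j\mathcal F(v_j,g_j)<+\infty$; then, as above, up to a subsequence $(v_j,g_j)\to(\bar v,\bar g)$ in $X$ with $|\Omega_{\bar g}^+|=|\Omega_h^+|$, $|\Omega_{\bar g}\Delta\Omega_h|\leq\mu/2$, and $\mathcal F(\bar v,\bar g)\leq\liminf_j\mathcal F(v_j,g_j)\leq\mathcal F(u,h)$. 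If $\sigma_j=0$ for infinitely many $j$, such $(v_j,g_j)$ is already admissible in Definition \ref{def:local-min} and strictly beats $(u,h)$, a contradiction, so from now on $\sigma_j>0$ for all large $j$. Also the degenerate case $|\Omega_h^+|=0$ is immediate: then $h\equiv0$, and capping any competitor down to the zero profile (which does not increase the surface energy and, since $E_0\equiv0$ on $\{y<0\}$, does not increase the elastic energy) shows that the right-hand side of \eqref{eq:penprob} is $\geq\mathcal F(u,h)$ for every $\lambda>0$; so I may assume $|\Omega_h^+|>0$.

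The heart of the matter is then to recover the exact volume cheaply. Since $\int_a^b\bar g=|\Omega_{\bar g}^+|>0$, choosing a point of $(a,b)\setminus(J(\bar g)\cup C(\bar g))$ at which $\bar g$ is positive and using the lower semicontinuity of $\bar g$, one obtains a fixed interval $I_0\subset\subset(a,b)$ and $\beta_0>0$ with $\bar g>\beta_0$ on $I_0$, whence by the Hausdorff convergence $g_j>\beta_0/2$ on $I_0$ for all large $j$. I fix $\psi\in C^\infty_c(I_0;[0,1])$ with $\int_a^b\psi=1$ and set $\tilde g_j:=g_j+\big(|\Omega_h^+|-|\Omega_{g_j}^+|\big)\psi$, so that $\tilde g_j\in AP(a,b)$, $\tilde g_j$ stays bounded below by $\beta_0/4>0$ on $\{\psi>0\}$ and equals $g_j$ outside $\{\psi>0\}$, $|\Omega_{\tilde g_j}^+|=\int_a^b\tilde g_j=|\Omega_h^+|$, and $|\Omega_{\tilde g_j}\Delta\Omega_{g_j}|\leq\sigma_j$. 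Because the modification is confined to $\{y\geq\beta_0/4\}$, i.e.\ strictly above the interface $\{y=0\}$ where $\varphi\equiv\gamma_f$ and $\C,E_0$ are constant, the cut and jump parts of $\Gamma_{g_j}$ are untouched, the graph length changes by at most $\|\psi'\|_{L^1}\sigma_j$ (as $t\mapsto\sqrt{1+t^2}$ is $1$-Lipschitz), and one produces $\tilde v_j\in H^1_{\rm loc}(\Omega_{\tilde g_j})$ — by restriction when $\tilde g_j\leq g_j$, and by a suitable $H^1$-extension of $v_j$ across the thin added layer (e.g.\ by composing $v_j$ with the inverse of a bi-Lipschitz vertical dilation of $\Omega_{g_j}$ equal to the identity on $\{y\leq\beta_0/4\}$, which preserves the sign of $y$ and hence does not see the jump of $\C$ and of $E_0$) when $\tilde g_j\geq g_j$ — at the cost of an additional elastic energy of at most $C\sigma_j$, with $C$ depending only on $\sup_j\mathcal F(v_j,g_j)$, $e_0$, $\beta_0$ and $\psi$. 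Altogether $\mathcal F(\tilde v_j,\tilde g_j)\leq\mathcal F(v_j,g_j)+\Lambda\sigma_j$ with $\Lambda$ independent of $j$, while $|\Omega_{\tilde g_j}\Delta\Omega_h|\leq\mu/2+\sigma_j\leq\mu$ for large $j$; hence $(\tilde v_j,\tilde g_j)$ is admissible in Definition \ref{def:local-min} and
\[
\mathcal F(u,h)\leq\mathcal F(\tilde v_j,\tilde g_j)\leq\mathcal F(v_j,g_j)+\Lambda\sigma_j<\mathcal F(u,h)-(\lambda_j-\Lambda)\sigma_j,
\]
which is absurd as soon as $\lambda_j>\Lambda$. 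This contradiction proves \eqref{eq:penprob}, and shows that one may take $\lambda_0$ to be any number exceeding the constant $\Lambda$ so produced.

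The step I expect to be the main obstacle is precisely this volume-recovery construction: the competitor must be corrected back to the prescribed film area at a cost that is \emph{linear} in the small discrepancy $\sigma_j$ \emph{and} with a constant that does not degenerate along the sequence. This is what forces the correction to be localized in a region where the profiles $g_j$ are uniformly bounded below — a region that has to be extracted from the limit profile $\bar g$ — and, above all, to be performed away from the film/substrate interface $\{y=0\}$, so that the discontinuities of $\C$, $E_0$ and $\varphi$ there never enter the estimate. All the remaining ingredients — the compactness, the lower semicontinuity of $\mathcal F$, and the fine structure of $AP(a,b)$ (Lemma~2.1 of \cite{FFLM2}) — are already at hand.
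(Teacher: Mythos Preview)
Your proposal is correct and follows the standard penalization-by-contradiction scheme of \cite{FFLM2}; this is exactly the route the paper takes as well, deferring the details to the companion paper \cite{davoli.piovano}. You have also correctly isolated the only new point with respect to \cite{FFLM2}, namely that the volume-recovery perturbation must be supported strictly above $\{y=0\}$ so that the discontinuities of $\C$, $E_0$, and $\varphi$ across the film/substrate interface do not enter the $O(\sigma_j)$ estimate.
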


We are now ready to \MMM recall \EEE the internal-ball condition for optimal profiles.

\begin{proposition}[Internal-ball condition]
\label{thm:palla-interna}
Let $(u,h)\in X$ be a $\mu$-local minimizer for the functional $\mathcal{F}$. Then, there exists $\rho_0>0$ such that for every $z\in \overline{\Gamma}_h$ we can choose a point $P_z$ for which  $B(P_z,\rho_0)\cap((a,b)\times \R)\subset \Omega_h$, and 
$$\partial B(P_z,\rho_0)\cap\overline{\Gamma}_h=\{z\}.$$
\end{proposition}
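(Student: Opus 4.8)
\emph{Proof strategy.} The plan is to derive the internal-ball condition from the penalized formulation of Proposition~\ref{prop:penalization} by comparing $(u,h)$ with the \emph{morphological opening} of its subgraph by a ball of small radius. Fix $\lambda=\lambda_0$ and write $\mathcal G(v,g):=\mathcal F(v,g)+\lambda_0\big||\Omega_h^+|-|\Omega_g^+|\big|$, so that $\mathcal F(u,h)\le\mathcal G(v,g)$ for every $(v,g)\in X$ with $|\Omega_g\Delta\Omega_h|\le\mu/2$.

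For $\rho>0$ set $\Omega^\rho:=\bigcup\{B(P,\rho):\,B(P,\rho)\cap((a,b)\times\R)\subset\Omega_h\}$. First I would verify that, since $\Omega_h$ is the subgraph of the lower semicontinuous function $h\ge 0$ and reduces to the full strip below $\{y=0\}$ (so admissible balls can be slid downwards), the set $\Omega^\rho\cap((a,b)\times\R)$ is again the subgraph of some $h_\rho\in AP(a,b)$ with $0\le h_\rho\le h$; moreover $|\Omega_h\setminus\Omega_{h_\rho}|\to0$ as $\rho\to0^+$, so the constraint $|\Omega_{h_\rho}\Delta\Omega_h|\le\mu/2$ is met for $\rho$ small, and $\Var h_\rho<+\infty$ then follows from the length bound below. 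The decisive input is the quantitative geometric estimate
\[
\int_{\tilde\Gamma_{h_\rho}}\varphi\,d\HH^1+2\gamma_f\HH^1(\Gamma_{h_\rho}^{cut})\ \le\ \int_{\tilde\Gamma_h}\varphi\,d\HH^1+2\gamma_f\HH^1(\Gamma_{h}^{cut})\ -\ \frac{c_0\gamma_f}{\rho}\,\big|\Omega_h\setminus\Omega_{h_\rho}\big|
\]
for a constant $c_0>0$ independent of $h$ and $\rho$: opening the subgraph by radius $\rho$ shortens the weighted profile by at least $c_0\gamma_f/\rho$ times the removed area. This exploits the subgraph structure (removed area is concentrated in fingers and crevices of width $\le 2\rho$, whose two bounding walls have combined length $\gtrsim(\text{area})/\rho$), together with \eqref{eq:ass-gammas} and the observation that $\varphi=\gamma_f>0$ on $\{y>0\}$ while the part of $\overline\Gamma_h$ lying on $\{y\le0\}$ is flat and is left unchanged by the opening.

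Granting the estimate, the competitor $\big(u|_{\Omega_{h_\rho}},h_\rho\big)\in X$ has no larger elastic energy (nonnegative integrand on the smaller domain $\Omega_{h_\rho}\subset\Omega_h$), the same term $\gamma_{fs}(b-a)$, surface energy smaller by at least $\tfrac{c_0\gamma_f}{\rho}|\Omega_h\setminus\Omega_{h_\rho}|$, and penalization at most $\lambda_0|\Omega_h\setminus\Omega_{h_\rho}|$, whence
\[
\mathcal G\big(u|_{\Omega_{h_\rho}},h_\rho\big)\ \le\ \mathcal F(u,h)+\Big(\lambda_0-\frac{c_0\gamma_f}{\rho}\Big)\big|\Omega_h\setminus\Omega_{h_\rho}\big|.
\]
Picking $\rho=\rho_1$ with $\rho_1<c_0\gamma_f/\lambda_0$ and $|\Omega_h\setminus\Omega_{h_{\rho_1}}|\le\mu/2$, the coefficient is negative, so minimality of $(u,h)$ for $\mathcal G$ forces $\Omega_{h_{\rho_1}}=\Omega_h$; that is, for every $z\in\overline\Gamma_h$ there is a ball $B(P,\rho_1)$ with $z\in\partial B(P,\rho_1)$ and $B(P,\rho_1)\cap((a,b)\times\R)\subset\Omega_h$. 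Taking $P_z$ to be the midpoint of the segment $[P,z]$ and $\rho_0:=\rho_1/2$, one has $B(P_z,\rho_0)\subset B(P,\rho_1)$ and $\overline{B(P_z,\rho_0)}\cap\partial B(P,\rho_1)=\{z\}$; since $\overline\Gamma_h\cap B(P,\rho_1)=\emptyset$, this yields $B(P_z,\rho_0)\cap((a,b)\times\R)\subset\Omega_h$ and $\partial B(P_z,\rho_0)\cap\overline\Gamma_h=\{z\}$, which is the asserted condition.

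The main obstacle is precisely the quantitative geometric estimate: one must show that the opening removes mass only at the cost of a proportionally larger ($\sim\rho^{-1}$) decrease of the weighted profile length, while checking that $\Omega^\rho$ stays in the admissible class $AP(a,b)$ and carrying out the bookkeeping of $\tilde\Gamma_h$ versus $\Gamma_h^{cut}$, as well as the behaviour near the lateral boundary $x=a,b$ and near the substrate line $\{y=0\}$, where $\varphi$ may degenerate but the profile is flat. Once this is in place the penalization mechanism closes the argument immediately, and $\rho_0$ is explicit in terms of $\lambda_0$ (hence of $\mu$) and $\gamma_f$.
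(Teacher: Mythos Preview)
The paper does not supply its own proof of this proposition; immediately before its statement the authors announce that they ``refer to \cite{davoli.piovano} for the proofs of the next two propositions.'' Your outline, however, is precisely the standard strategy used in that companion paper and in the earlier works on which it builds, namely \cite{CL} and \cite[Proposition~3.3]{FFLM2}: pass to the penalized formulation \eqref{eq:penprob}, compare $(u,h)$ with the restriction of $u$ to the morphological opening $\Omega_{h_\rho}$ of the subgraph, note that the bulk term can only decrease (nonnegative integrand on a smaller domain), and force $|\Omega_h\setminus\Omega_{h_\rho}|=0$ for $\rho$ small via a quantitative perimeter--area defect estimate. So there is no methodological divergence to report.

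Two remarks on the technical points you flag. First, the inequality you isolate as the ``main obstacle'' is indeed the heart of the matter, and it is a genuine geometric lemma established in \cite{CL} and adapted to subgraphs in \cite{FFLM2}: for a subgraph $\Omega_h$ one has a bound of the type $|\Omega_h\setminus\Omega_{h_\rho}|\le c\,\rho\big(\HH^1(\Gamma_h)-\HH^1(\Gamma_{h_\rho})\big)$, obtained by decomposing the removed set into its connected components and exploiting that each such component is trapped between two arcs whose combined length dominates the enclosed area divided by $\rho$. Your heuristic (``fingers and crevices of width $\le 2\rho$'') is the correct picture for this. Second, the bookkeeping you worry about actually works in your favour in the present weighted setting: the opening wipes out $\Gamma_h^{cut}$ entirely (a union of open balls carries no vertical slits), and since $\varphi(0)=\min\{\gamma_f,\gamma_s-\gamma_{fs}\}\le\gamma_f$ by \eqref{eq:ass-gammas}, any portion of $\tilde\Gamma_{h_\rho}$ that lands on $\{y=0\}$ is weighted no more than the portion of $\tilde\Gamma_h$ it replaces. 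The final halving trick to upgrade ``some tangent ball of radius $\rho_1$'' to the sharp touching condition $\partial B(P_z,\rho_0)\cap\overline\Gamma_h=\{z\}$ with $\rho_0=\rho_1/2$ is correct as written.
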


\noindent We point out that in view of Proposition \ref{thm:palla-interna}  the upper-end point of each \emph{cut} is a cusp point (see Figure \ref{graphs}). 

The following proposition is a consequence of the internal-ball condition.

\begin{proposition}
\label{prop:loc-lip}
Let $(u,h)\in X$ be a $\mu$-local minimizer for the functional $\mathcal{F}$. Then for any $z_0\in \overline{\Gamma}_h$ there exist an orthonormal basis $\mathbf{v}_1,\mathbf{v}_2\in\R^2$, and a rectangle $$Q:=\left\{z_0+s\mathbf{v}_1+t\mathbf{v}_2:\,-a'<s<a',\,-b'<t<b'\right\},$$
$a',b'>0$, such that $\Omega_h\cap Q$ has one of the following two representations:
\begin{itemize}
\item[1.] There exists a Lipschitz function $g:(-a',a')\to(-b',b')$ such that $g(0)=0$ and $$\qquad\Omega_h\cap Q:=\left\{z_0+s\mathbf{v}_1+t\mathbf{v}_2:\,-a'<s<a',\,-b'<t<g(s)\right\}\cap ((a,b)\times \R).$$
In addition, the function $g$ admits left and right derivatives at all points that are, respectively, left and right continuous.
\item[2.] There exist two Lipschitz functions $g_1,g_2:[0,a')\to(-b',b')$ such that $g_i(0)=(g_i)'_+(0)=0$ for $i=1,2$, $g_1\leq g_2$, and
               $$\quad\qquad\Omega_h\cap Q:=\left\{z_0+s\mathbf{v}_1+t\mathbf{v}_2:\,0<s<a',\,-b'<t<g_1(s)\text{ or }g_2(s)<t<b'\right\}.$$
In addition, the functions $g_1,g_2$ admit left and right derivatives at all points that are, respectively, left and right continuous.
\end{itemize}
\end{proposition}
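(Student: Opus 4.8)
The plan is to read the two representations directly off the internal-ball condition of Proposition~\ref{thm:palla-interna}, combined with the connectedness of $\tilde{\Gamma}_h$ (item~5 of the list of properties of $h\in AP(a,b)$).

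\emph{Step 1: cone/parabola exclusion and choice of frame.} Fix $z_0\in\overline{\Gamma}_h$ and let $P_{z_0}$ be the center of an internal ball provided by Proposition~\ref{thm:palla-interna}; set $\mathbf{v}_2:=(P_{z_0}-z_0)/\rho_0$ and let $\mathbf{v}_1$ be a unit vector orthogonal to $\mathbf{v}_2$. Writing nearby points in the coordinates $z_0+s\mathbf{v}_1+t\mathbf{v}_2$, the inclusion $B(P_{z_0},\rho_0)\cap((a,b)\times\R)\subset\Omega_h$ together with $\partial B(P_{z_0},\rho_0)\cap\overline{\Gamma}_h=\{z_0\}$ forces every $z\in\overline{\Gamma}_h$ sufficiently close to $z_0$ to lie outside $B(P_{z_0},\rho_0)$, i.e.\ to satisfy $t\le |z-z_0|^2/(2\rho_0)$; hence, for any prescribed slope $L>0$, there is a rectangle $Q$ on which $\overline{\Gamma}_h$ avoids the open cone $\{t>L|s|\}$. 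Running the same argument at points $z_1\in\overline{\Gamma}_h$ near $z_0$ yields an exclusion cone with apex $z_1$ and axis $\mathbf{v}_2(z_1):=(P_{z_1}-z_1)/\rho_0$, and a compactness argument shows $\mathbf{v}_2(z_1)\to\pm\mathbf{v}_2(z_0)$ as $z_1\to z_0$: the sign ambiguity is the only possibility, since two internal balls of radius $\rho_0$ through the same boundary point with transversal axes would put $z_0$ into ${\rm Int}(\overline{\Omega}_h)=\{y<h^-\}$, contradicting $z_0\in\tilde{\Gamma}_h$.

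\emph{Step 2: the dichotomy.} If $\mathbf{v}_2(z_1)\to\mathbf{v}_2(z_0)$ for all such $z_1$, the union of the exclusion cones shows that, after shrinking $Q$ (and possibly tilting $\mathbf{v}_1$ slightly), for each $s\in(-a',a')$ the slice $\{t:z_0+s\mathbf{v}_1+t\mathbf{v}_2\in\Omega_h\}$ is an interval $(-b',g(s))$ with $g$ an $L$-Lipschitz function; since $z_0\in\partial\Omega_h$ one gets $g(0)=0$, which is representation~1. If instead $\mathbf{v}_2(z_1)\to-\mathbf{v}_2(z_0)$ along some sequence, a limit of internal balls produces a second ball through $z_0$ tangent to the first from the opposite side; the two parabolic exclusions then confine $\overline{\Gamma}_h$ near $z_0$ to a neighborhood of the axis $\{t=0\}$ contained in the half-plane $\{s\ge 0\}$. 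Here one uses that $\Omega_h$ cannot meet the $\{s<0\}$ side near $z_0$ (otherwise the two opposite balls would again interiorize $z_0$) and that $\tilde{\Gamma}_h$ is connected, so that $\overline{\Gamma}_h\cap Q$ consists exactly of a subsegment of $\{s=0\}$ together with two Lipschitz graphs $t=g_1(s)\le g_2(s)$ over $[0,a')$; the parabolic bound forces $g_i(0)=(g_i)'_+(0)=0$, and the complementary description of $\Omega_h\cap Q$ is representation~2.

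\emph{Step 3: one-sided derivatives and conclusion.} For every point $w$ in the graph portion of $\overline{\Gamma}_h\cap Q$, the internal ball at $w$ is tangent from inside $\Omega_h$, so $g$ (respectively $g_1$, $g_2$) lies above a circular arc of radius $\rho_0$ through $w$; this one-sided $C^{1,1}$-type estimate is classical and yields existence of left and right derivatives at every point, left- and right-continuous respectively. Fixing $a',b'$ small enough to absorb all constants finishes the proof.

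I expect the main obstacle to be the bookkeeping of Step~2 in the ``flipped'' case: showing that the material cannot appear on the $\{s<0\}$ side and that the portion of $\overline{\Gamma}_h$ trapped near the axis organizes into exactly two one-sided graphs (rather than a more complicated set, or the spurious ``two half-disk'' configuration), which is precisely where the connectedness of $\tilde{\Gamma}_h$ and a careful tracking of which internal balls touch which boundary points must be combined, and where one checks that a fixed radius $\rho_0$ rules out the degenerate pictures.
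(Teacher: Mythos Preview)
The paper does not prove this proposition; it simply refers the reader to \cite[Lemma~3]{CL} and \cite[Proposition~3.5]{FFLM2}. Your proposal is precisely the argument carried out in those references: the uniform internal-ball condition of Proposition~\ref{thm:palla-interna} forces the boundary near $z_0$ either to be a single Lipschitz graph (when the internal balls at nearby points are all on the same side) or to be trapped between two tangent disks of radius $\rho_0$ touching $z_0$ from opposite sides, producing the cusp/cut picture described by two one-sided Lipschitz graphs. The one-sided $C^{1,1}$-type bound you invoke in Step~3 is exactly how existence and one-sided continuity of left/right derivatives is obtained there.

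Your sketch is therefore aligned with the intended proof. Two small remarks. First, in Step~1 the limiting-normal argument is cleaner if phrased as: any subsequential limit $P^*$ of centers $P_{z_1}$ gives a closed ball $\overline{B(P^*,\rho_0)}\subset\overline{\Omega}_h$ touching $z_0$; combined with the original ball $B(P_{z_0},\rho_0)$ and the touching condition $\partial B(P_{z_0},\rho_0)\cap\overline{\Gamma}_h=\{z_0\}$, any direction $P^*-z_0$ not parallel to $\pm\mathbf{v}_2$ would place an arc of $\partial B(P^*,\rho_0)$ (hence boundary points of $\Omega_h$) strictly inside $B(P_{z_0},\rho_0)$, a contradiction. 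Second, the bookkeeping you flag in Step~2 is indeed the delicate part, and in \cite{CL,FFLM2} it is handled by the same combination you propose: connectedness of $\tilde{\Gamma}_h$ plus the two opposite parabolic barriers pin down the two one-sided graphs and exclude material on the $\{s<0\}$ side. So your anticipated obstacle is real but is exactly what those references resolve.
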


For the proof of Proposition \ref{prop:loc-lip} we refer the reader to  \cite[Lemma 3]{CL} and \cite[Proposition 3.5]{FFLM2}. In particular Proposition \ref{prop:loc-lip} entails that the set 
$$\Gamma_h^{reg}=\Gamma_h\setminus(\Gamma_h^{cusp}\cup\Gamma_h^{cut})$$
is locally Lipschitz (see the proof of Theorem \ref{thm:regularity} in Section \ref{sec:analiticity} for more details). 

\subsection{Decay estimate}\label{sec:decay} 
From now on we work under the assumption that both the film and the substrate are made of linearly elastic isotropic materials, and we denote by $\mu_f$, $\lambda_f$, $\mu_s$, $\lambda_s$ their Lam\'e coefficients. Note that
$$\C_{\sigma}Eu=2\mu_{\sigma}Eu+\lambda_{\sigma}({\rm div}\,u)Id,\quad\sigma=f,s,$$
for every $u\in H^1(\Omega_h;\R^2)$.

In order to prove the decay estimate of Proposition \ref{thm:blow} for minimizing configurations $(u,h)$ at the points of $\Gamma_h^{reg}$ a \emph{blow-up} around such points  is needed. As the graph is allowed to touch the film/substrate interface, we are lead to consider transmission problems for Lam\'e systems in conical sets. We first state a preliminary lemma, relying on \cite[Theorem 1.5.2.8]{Grisvard}, and whose proof is contained in \cite[Lemma 3.12]{FFLM2}).

\begin{lemma}
\label{lemma:grisvard}
Let $\mathcal{C}$ be a circular sector of amplitude $\theta\in (0,2\pi)$ and radius $R>0$. Assume that $\mathcal{C}$ is the reference configuration of a linearly elastic isotropic material whose Lam\'e coefficients are denoted by $\mu$ and $\lambda$. Let $g\in H^{1/2}(\partial \mathcal{C};\R^2)$ be a function vanishing in a neighborhood of the origin. Then there exists a function $v\in H^2(\mathcal{C};\R^2)$ such that
$$[2\mu Ev+\lambda ({\rm div }\,v)Id\,]\,\nu_{\mathcal{C}}=g\quad\text{on }\partial \mathcal{C},$$
where $\nu_{\mathcal{C}}$ is the outer unit normal to $ \mathcal{C}$ (where it exists), and
$$v=0\quad\text{on }\partial\mathcal{C}.$$
\end{lemma}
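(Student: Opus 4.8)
\textbf{Proof plan for Lemma \ref{lemma:grisvard}.}

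The plan is to reduce the problem to the classical lifting result \cite[Theorem 1.5.2.8]{Grisvard}, which produces an $H^2$ function on a smooth (or sufficiently regular) domain with prescribed Neumann and Dirichlet traces, modulo compatibility conditions at the corners. First I would straighten the picture: since $g$ vanishes in a neighborhood of the origin, the only geometric obstruction — the reentrant or convex corner at the vertex of the sector $\mathcal{C}$ — is irrelevant, because near the vertex we are prescribing homogeneous data $g=0$ and $v=0$, which trivially satisfy every compatibility condition. Away from the vertex, $\partial\mathcal{C}$ consists of two straight segments and a circular arc, meeting at corners of amplitude $\theta$ (at the vertex, which we have just neutralized) and $\pi/2$ (where the radial segments meet the arc). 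On each smooth piece of the boundary the data $(g,0)$ are as regular as required ($g\in H^{1/2}$ suffices for the Neumann datum, and the Dirichlet datum is identically zero, hence in $H^{3/2}$).

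Next I would record that the Lam\'e operator $u\mapsto \mu\Delta u+(\mu+\lambda)\nabla(\mathrm{div}\,u)$ is properly elliptic and that the pair (Dirichlet condition, Neumann/traction condition) covers it (the Shapiro–Lopatinskii condition holds for the displacement–traction system on a smooth boundary) for $\mu>0$, $\mu+\lambda>0$ — exactly the standing assumption on the Lam\'e coefficients. Hence \cite[Theorem 1.5.2.8]{Grisvard}, applied on each of the three boundary portions and then patched with a partition of unity subordinate to a cover of $\partial\mathcal{C}$ that separates the two corners at the arc's endpoints, yields a function $v\in H^2(\mathcal{C};\R^2)$ with the prescribed traction $[2\mu Ev+\lambda(\mathrm{div}\,v)Id]\nu_{\mathcal{C}}=g$ on the two radial segments and the arc where $\nu_{\mathcal{C}}$ exists, and with $v=0$ on all of $\partial\mathcal{C}$. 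The corner compatibility conditions of that theorem at the two $\pi/2$-corners are satisfied because one of the two traces is the zero function; at the vertex they are satisfied because both traces vanish identically near it.

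The main obstacle — and the reason the hypothesis ``$g$ vanishes near the origin'' is imposed — is precisely the behavior at the conical vertex: for a general amplitude $\theta\in(0,2\pi)$ the displacement–traction transmission (here, boundary-value) problem on a cone has an operator pencil whose singular exponents can lie in the strip that obstructs $H^2$-regularity, so without the support assumption on $g$ no $H^2$ lifting need exist. With the support assumption this difficulty disappears, and the remaining work is the routine patching argument sketched above. This is exactly the content of \cite[Lemma 3.12]{FFLM2}, to which one may appeal for the detailed verification; I would cite it rather than reproduce the partition-of-unity bookkeeping.
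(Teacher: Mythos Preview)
Your proposal is correct and matches the paper's approach exactly: the paper does not give an independent proof but simply states that the lemma relies on \cite[Theorem 1.5.2.8]{Grisvard} and that the proof is contained in \cite[Lemma 3.12]{FFLM2}. Your sketch fills in precisely the reasoning behind those two citations---the vanishing of $g$ near the vertex neutralizes the conical singularity, the remaining corners are handled by the compatibility conditions in Grisvard's lifting theorem, and the Lam\'e system satisfies the required covering condition---so there is nothing to correct.
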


In the following proposition we assess the regularity of weak solutions to transmission problems for Lam\'e systems in conical sets.

\begin{proposition}
\label{prop:lame-systems3}
Let $\mathcal{C}$ be the set given by 
$$\mathcal{C}:={\rm Int}\left(\bigcup_{i=1}^3\overline{\mathcal{C}_i}\right)$$
where $\mathcal{C}_i$, $i=1,2,3$,  are the circular sectors defined by
$$\mathcal{C}_i:=\{(x,y):\,x=\rho\cos(\theta),\,y=\rho\sin(\theta),\,\text{with }0< \rho< R,\,\text{ and }\theta_{i-1}<\theta<\theta_i\}$$
with $R>0$, and $0=:\theta_0\leq \theta_1<\theta_2\leq \theta_3<2\pi$ (see Figure \ref{trasmission}). 
Denote by 
$$\Gamma_{1,0}:=(0,R),$$
and
$$\Gamma_{3,0}:=\{(\rho\cos(\theta_3),\rho\sin(\theta_3))\in\R^2\,\text{with }0< \rho< R\},$$
the two external sides of $\mathcal{C}$, and by
$$\Gamma_i:=\{(x,y):\,x=R\cos\theta,\,y=R\sin\theta,\,\text{with }\theta_{i-1}\leq \theta<\theta_i\},$$
for $i=1,2,3$ the curvilinear portions of its boundary. Finally, consider the transmission interfaces 
$$\Gamma_{i,i+1}:=\partial \mathcal{C}_i\cap\partial \mathcal{C}_{i+1}\quad\text{ for }i=1,2.$$ 
We assume that each set $\mathcal{C}_i$ is the reference configuration of a linearly elastic, isotropic material whose Lam\'e coefficients are denoted by $\mu_i$ and $\lambda_i$, with $\mu_3:=\mu_1$ and $\lambda_3:=\lambda_1$, and satisfy the quasi-monotonicity condition:
$$\mu_2\geq \mu_1>0\quad\text{and}\quad\mu_2+\lambda_2\geq\mu_1+\lambda_1>0.$$

Let $(u_1,u_2, u_3)\in \displaystyle\prod_{i=1}^3 H^1(\mathcal{C}_i;\R^2)$  be a weak solution of the transmission problem:
\be{eq:transmission-problem}
\begin{cases}
\mu_i \Delta u_i+(\lambda_i+\mu_i)\nabla ({\rm div}\,u_i)= f_i&\text{in }\mathcal{C}_i,\,i=1,2,3,\\
 [2\mu_i Eu_i+\lambda_i ({\rm div }\,u_i)Id\,]\,\nu_{i,0}=0&\text{on }\Gamma_{i,0},\,i=1,3,\\
[2\mu_i Eu_i+\lambda_i ({\rm div }\,u_i)Id\,]\,\nu_i=g_i&\text{on }\Gamma_i,\,i=1,2,3,\\
u_i-u_{i+1}=0&\text{on }\Gamma_{i,i+1},\,i=1,2,\EEE\\
\Big[2\mu_i Eu_i-2\mu_{i+1} Eu_{i+1}&\\
\hspace{1.4cm}+\,\lambda_i ({\rm div }\,u_i)Id-\lambda_{i+1} ({\rm div }\,u_{i+1})Id\,\Big]\,\nu_{i,i+1}=0&\text{on }\Gamma_{i,i+1},\,i=1,2,
\end{cases}
\ee
where the data $f_i$ and $g_i$ satisfy $f_i\in L^2(\mathcal{C}_i)$, $g_i\in H^{1/2}(\Gamma_i,\R^2)$, $i=1,2,3$, the vectors $\nu_{i,i+1}$ are the normal to $\Gamma_{i,i+1}$ external to $ \mathcal{C}_{i}$, $i=1,2$, and the vectors $\nu_{1,0}$, $\nu_{3,0}$, and $\nu_i$ are the outer unit normals to $\Gamma_{1,0}$, $\Gamma_{3,0}$, and $\Gamma_i,\,i=1,2,3,$ respectively.   

If there exists a vector $\tau\in\Rz^2\setminus\{0\}$ such that $\tau\in\mathcal{C}_2$ and $-\tau\not\in\overline{\mathcal{C}}$, then there exists a neighbourhood $U$ of the origin such that
$$u\in H^{3/2+\ep}(U\cap \mathcal{C}_i)$$
for some $\ep>0$ and for $i=1,2,3$.
\end{proposition}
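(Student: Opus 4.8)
The strategy is to reduce the transmission problem \eqref{eq:transmission-problem} to the abstract setting of \cite{KS} (and the related Mellin/operator-pencil machinery of \cite{NS,NS2}) and then to read off the regularity exponent from the location of the eigenvalues of the associated operator pencil. First I would homogenize the problem near the origin: since $g_i\in H^{1/2}(\Gamma_i;\R^2)$ is prescribed only on the outer curvilinear arcs $\Gamma_i$, which stay a fixed distance away from the vertex, and since $f_i\in L^2(\mathcal{C}_i)$, a cutoff argument together with Lemma \ref{lemma:grisvard} (applied on each sector $\mathcal{C}_i$ to lift the boundary data away from the origin) lets me subtract off an $H^2$ correction. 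Thus it suffices to analyze a solution $u$ of the \emph{homogeneous} transmission problem (zero body force, zero Neumann data on the outer sides, zero jump of displacement and traction across $\Gamma_{1,2}$ and $\Gamma_{2,3}$) in a punctured neighbourhood of $0$, with $u\in\prod_i H^1(\mathcal{C}_i;\R^2)$.

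Next I would invoke the Kondrat'ev-type theory: writing the Lamé system in polar coordinates $(\rho,\theta)$ and applying the Mellin transform in $\rho$ converts the piecewise-constant-coefficient transmission problem on the union of angular intervals $(\theta_{i-1},\theta_i)$ into a one-parameter family of ODE boundary-value problems on $(0,\theta_3)$ with a spectral parameter; call the resulting operator pencil $\mathfrak{L}(\cdot)$. Standard results (see \cite{NS,NS2}) give that $u$ admits an asymptotic expansion near the origin governed by the eigenvalues of $\mathfrak{L}$ in the strip $0<\operatorname{Re}\zeta<1$, and that $u\in H^{3/2+\varepsilon}$ locally near the vertex precisely when $\mathfrak{L}$ has no eigenvalue with real part in $(0,1/2]$; equivalently, the first eigenvalue past $\zeta=0$ has real part strictly greater than $1/2$. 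So the whole statement comes down to: \emph{under the quasi-monotonicity condition $\mu_2\geq\mu_1>0$, $\mu_2+\lambda_2\geq\mu_1+\lambda_1>0$, and under the geometric hypothesis that there is $\tau\neq0$ with $\tau\in\mathcal{C}_2$ and $-\tau\notin\overline{\mathcal{C}}$, the pencil $\mathfrak{L}$ has no eigenvalues in the closed half-strip $\{0<\operatorname{Re}\zeta\leq 1/2\}$.}

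This eigenvalue-free-strip assertion is exactly the content I would extract from \cite{KS}: the quasi-monotonicity condition is what guarantees, via the decomposition formula of \cite{NS2} and an energy/Rellich-type identity for the pencil, that eigenvalues cannot enter the critical strip — the monotonicity makes the relevant quadratic form definite. The geometric condition $\tau\in\mathcal{C}_2$, $-\tau\notin\overline{\mathcal{C}}$ encodes that the "harder" material $\mathcal{C}_2$ sits in the interior and that the total opening $\theta_3$ is less than $2\pi$ along a direction through $\mathcal{C}_2$ — this is the configuration for which \cite{KS} proves the spectral gap (it rules out the pure-crack and the symmetric-inclusion cases where an eigenvalue sits exactly at $1/2$). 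I would then conclude by transferring the Mellin-side regularity back: absence of pencil eigenvalues in $(0,1/2]$ plus the $L^2$ body force and $H^{1/2}$ data (whose contributions correspond to $\operatorname{Re}\zeta$ well past $1/2$) yield, by the inverse Mellin transform and the usual weighted-to-classical Sobolev embedding near a conical point, that $u\in H^{3/2+\varepsilon}(U\cap\mathcal{C}_i)$ for each $i$ and some $\varepsilon>0$.

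The main obstacle is the spectral step — verifying that the quasi-monotonicity condition together with the stated geometry genuinely forces the operator pencil to have its first nonzero eigenvalue strictly to the right of $\operatorname{Re}\zeta=1/2$. This is where all the work of \cite{KS,NS,NS2} is concentrated, and the role of our plan is really to check that our three-sector geometry (two outer sectors of the film material $\mu_1,\lambda_1$ flanking one inner sector of the substrate material $\mu_2,\lambda_2$, with total angle $<2\pi$) falls within the scope of those theorems and that the hypothesis $\mu_3=\mu_1,\lambda_3=\lambda_1$ together with $-\tau\notin\overline{\mathcal{C}}$ supplies the missing geometric input. Everything else — the cutoff/lifting reduction and the final inverse-Mellin regularity transfer — is routine given the results already recalled in the excerpt (Lemma \ref{lemma:grisvard}) and in \cite{Grisvard}.
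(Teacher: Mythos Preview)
Your proposal is essentially the paper's proof: both localize with a cutoff and use Lemma~\ref{lemma:grisvard} to lift the nonhomogeneous boundary and transmission data (which vanish near the vertex after cutoff) by an $H^2$ correction, then apply \cite[Theorem~2]{KS}---whose content is precisely the Mellin/operator-pencil spectral-gap statement you spell out---to the resulting problem. One small slip: the cutoff step leaves an $L^2$ body force $\hat f_i$ rather than zero body force (and the paper applies the lifting lemma only on $\mathcal{C}_1,\mathcal{C}_3$, not on all three sectors), but since \cite{KS} accepts $L^2$ right-hand sides this does not affect the argument.
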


\begin{proof}
Let $\varphi\in C^{\infty}_c(\mathcal{C})$ be a cut-off function such that $0\leq \varphi\leq 1$ in $\mathcal{C}$, with ${\rm supp}\,\varphi\subset\subset B_{R/2}$, and $\varphi\equiv 1$ in $B_{R/3}$, where here $B_{R/2}$ and $B_{R/3}$ are the balls centered in the origin and with radii $R/2$ and $R/3$, respectively. Consider the maps $(\tilde{u}_1,\tilde{u}_2, \tilde{u}_3)\in \displaystyle\prod_{i=1}^3 H^1(\mathcal{C}_i;\R^2)$, defined as $\tilde{u}_i:=\varphi u_i$, $i=1,2,3.$
By straightforward computation, and in view of \eqref{eq:transmission-problem}, the triple $(\tilde{u}_1,\tilde{u}_2, \tilde{u}_3)$ solves the transmission problem
\be{eq:transmission-problemtilde}
\begin{cases}
\mu_i \Delta \tilde{u}_i+(\lambda_i+\mu_i)\nabla ({\rm div}\,\tilde{u}_i)= \tilde{f}_i&\text{in }\mathcal{C}_i,\,i=1,2,3,\\
[2\mu_i E\tilde{u}_i+\lambda_i ({\rm div }\,\tilde{u}_i)Id\,]\,\nu_{i,0}=\tilde{g}_i&\text{on }\Gamma_{i,0},\,i=1,3,\\
[2\mu_i E\tilde{u}_i+\lambda_i ({\rm div }\,\tilde{u}_i)Id\,]\,\nu_i=0&\text{on }\Gamma_i,\,i=1,2,3,\\
\tilde{u}_i-\tilde{u}_{i+1}=0&\text{on }\Gamma_{i,i+1},\,i=1,2,\\
\Big[2\mu_i E\tilde{u}_i-2\mu_{i+1} E\tilde{u}_{i+1}&\\
\hspace{1.2cm}+\,\lambda_i ({\rm div }\,\tilde{u}_i)Id-\lambda_{i+1} ({\rm div }\,\tilde{u}_{i+1})Id\,\Big]\,\nu_{i,i+1}=\tilde{h}_i&\text{on }\Gamma_{i,i+1},\,i=1,2,
\end{cases}
\ee
where $\tilde{f}_i\in L^2(\mathcal{C}_i)$, $\tilde{g}_i\in H^{1/2}(\Gamma_{i,0};\R^2)$, and $\tilde{h}_i\in H^{1/2}(\Gamma_{i,i+1};\R^2)$ for every $i$,  and the maps $\tilde{g}_i$ and $\tilde{h}_i$ vanish in the intersection of their domains with $B_{R/3}$. By applying Lemma \ref{lemma:grisvard} to both sets $\mathcal{C}_i$, $i=1,3$, with
\be{eq:deg-g}g=\begin{cases} 
\tilde{g}_i&\text{on }\Gamma_{i,0},\\
0&\text{on }\Gamma_i,\\
\tilde{h}_i&\text{on }\Gamma_{i,i+1},
\end{cases}\ee
we obtain functions $v_i\in H^2(\mathcal{C}_i;\R^2)$ such that
\be{eq:trace-ci}[2\mu Ev_i+\lambda ({\rm div }\,v_i)Id\,]\,\nu_{\mathcal{C}_i}=g\quad\text{on }\partial \mathcal{C}_i,\ee
where $\nu_{\mathcal{C}_i}$ is the outer unit normal to $ \mathcal{C}_i$ (where it exists), and
\be{eq:trace-vi}v_i=0\quad\text{on }\partial\mathcal{C}_i.\ee
Setting $(w_1,w_2,w_3):=(\tilde{u}_1-v_1,\tilde{u}_2, \tilde{u}_3-v_3)\in \displaystyle\prod_{i=1}^3 H^1(\mathcal{C}_i;\R^2)$, by \eqref{eq:transmission-problemtilde}, and \eqref{eq:deg-g}--\eqref{eq:trace-vi} there holds
\be{eq:transmission-problemfinal}
\begin{cases}
\mu_i \Delta w_i+(\lambda_i+\mu_i)\nabla ({\rm div}\,w_i)= \hat{f}_i&\text{in }\mathcal{C}_i,\,i=1,2,3,\\
 [2\mu_i Ew_i+\lambda_i ({\rm div }\,w_i)Id\,]\,\nu_{i,0}=0&\text{on }\Gamma_{i,0},\,i=1,3,\\
[2\mu_i Ew_i+\lambda_i ({\rm div }\,w_i)Id\,]\,\nu_i=0&\text{on }\Gamma_i,\,i=1,2,3,\\
w_i-w_{i+1}=0&\text{on }\Gamma_{i,i+1},\,i=1,2,\EEE\\
\Big[2\mu_i Ew_i-2\mu_{i+1} Ew_{i+1}&\\
\hspace{1.4cm}+\,\lambda_i ({\rm div }\,w_i)Id-\lambda_{i+1} ({\rm div }\,w_{i+1})Id\,\Big]\,\nu_{i,i+1}=0&\text{on }\Gamma_{i,i+1},\,i=1,2,
\end{cases}
\ee
where $\hat{f}_i\in L^2(\mathcal{C}_i)$ for $i=1,2,3$. By \cite[Theorem 2]{KS} we obtain that there exists a neighborhood $\tilde{U}$ of the origin such that $w_i\in H^{3/2+\ep}(\tilde{U}\cap \mathcal{C}_i)$ for $i=1,2,3$. The thesis follows by observing that on $U:=\tilde{U}\cap B_{R/3}$, the triple $(u_1,u_2, u_3)$ satisfies
$$(u_1,u_2, u_3)=(\tilde{u}_1,\tilde{u}_2, \tilde{u}_3)=(w_1+v_1,w_2,w_3+v_3),$$
and by the regularity of the maps $v_i$, $i=1,3$.
\end{proof}

\begin{figure}[htp]
\begin{center}
\includegraphics[scale=0.8]{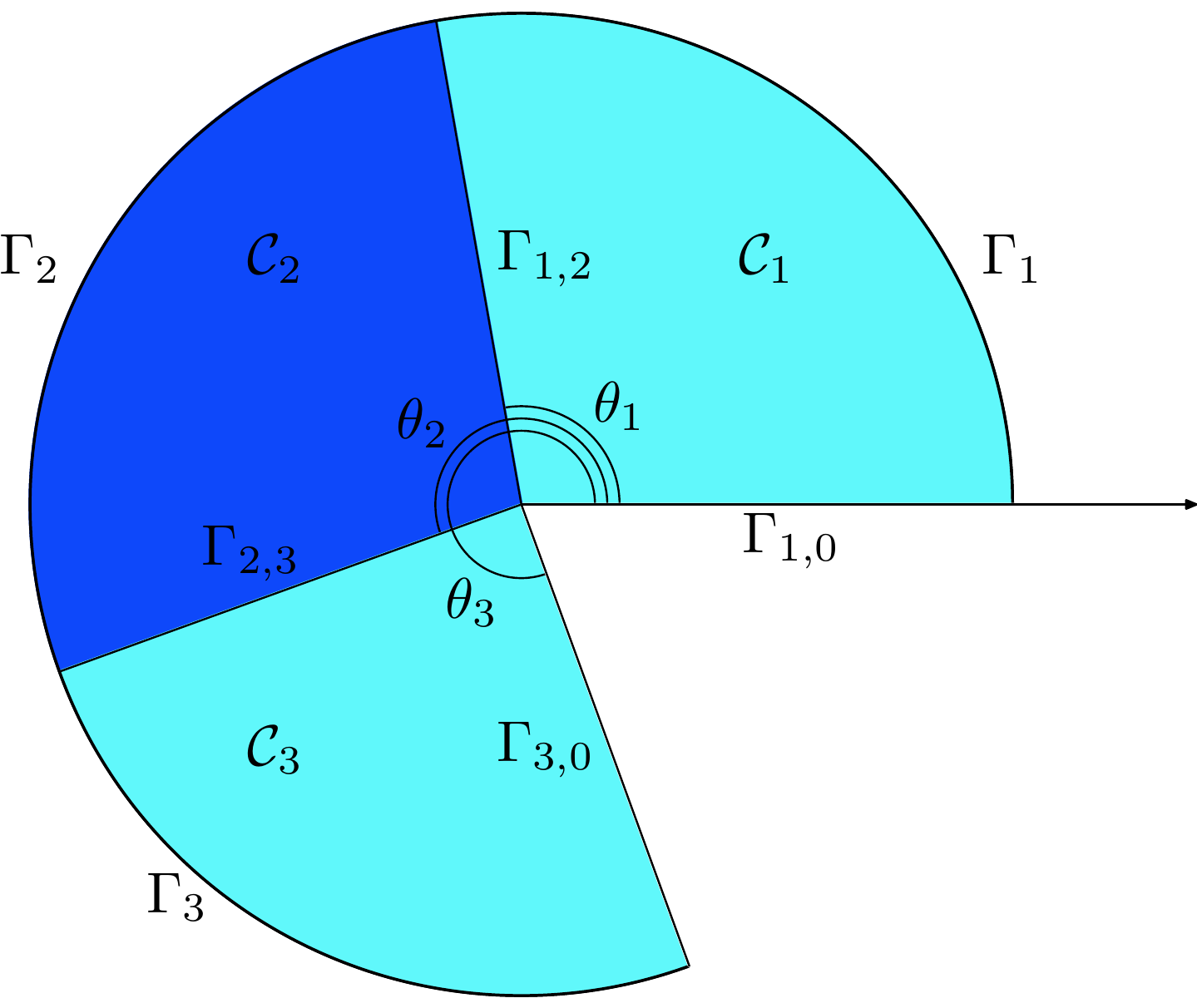}
\caption{The geometry of the set $\mathcal{C}$ on which we consider the transmission problem of Proposition \ref{prop:lame-systems3} is depicted. The lines $\Gamma_{1,2}$ and $\Gamma_{2,3}$ are transmission interfaces for such problem.}
\label{trasmission}
\end{center}
\end{figure}

We are now ready to provide a decay estimate for the gradient of minimizing displacements at the points in which the graph of the corresponding minimizing profile is locally Lipschitz.

\begin{proposition}[Decay estimate]
\label{thm:blow}
Let $(u, h)\in X$ be a $\mu$-local minimizer for the functional $\mathcal{F}$ and assume that the Lam\'e coefficients of film and substrate satisfy the monotonicity condition \eqref{monotonicity}. Let $z_0:=(x_0,h(x_0))\in \Gamma_h\setminus (\Gamma_h^{cut}\cup \Gamma_h^{cusp})$. Then there exists a constant $C>0$, a radius $r_0$, and an exponent $\frac12<\alpha<1$, such that
$$\int_{B(z_0,r)\cup\Omega_h}|\nabla u|^2\,dx\,dy\leq Cr^{2\alpha}$$
for all $0<r<r_0$.
\end{proposition}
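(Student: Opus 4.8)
The plan is to combine the transmission-regularity result of Proposition~\ref{prop:lame-systems3} with a blow-up at $z_0$ and a standard iteration lemma. First I would record the elliptic structure of minimizers: varying $u$ alone in $\mathcal{F}$ (with $h$ frozen), $u$ is a weak solution of the Euler--Lagrange system ${\rm div}\,\big(\mathbb{C}(y)(Eu-E_0)\big)=0$ in $\Omega_h$, with the natural conditions $\big(\mathbb{C}(y)(Eu-E_0)\big)\nu=0$ on $\Gamma_h$ and continuity of $u$ and of the normal trace of $\mathbb{C}(y)(Eu-E_0)$ across $\{y=0\}$; equivalently, in the formulation of Remark~\ref{rk:equivalent}, $(u^+,u^-)$ solves the homogeneous Lam\'e transmission problem with an affine Dirichlet jump on $\{y=0\}$. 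Since $E_0$ is piecewise constant, ${\rm div}\,(\mathbb{C}(y)E_0)=0$ inside $\Omega_h^\pm$, so the mismatch enters only through bounded data: the Neumann datum $\mathbb{C}(y)E_0\,\nu$ on $\Gamma_h$ and a bounded (in fact constant) jump of the normal traction on $\{y=0\}$. By Proposition~\ref{prop:loc-lip}, near $z_0$ the set $\Omega_h$ is, up to a rotation, the subgraph of a Lipschitz function through $z_0$ of \emph{finite} slope (here $z_0\notin\Gamma_h^{cusp}\cup\Gamma_h^{cut}$ is used), and by Proposition~\ref{thm:palla-interna} it satisfies an interior ball condition at $z_0$. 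Writing $\omega(r):=\int_{B(z_0,r)\cap\Omega_h}|\nabla u|^2$, the goal is $\omega(r)\le Cr^{2\alpha}$ for some $\alpha\in(1/2,1)$.

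Next I would peel off the inhomogeneity. For $0<r<r_1$, split $u=v+w$ on $B(z_0,r)\cap\Omega_h$, where $v$ solves the \emph{fully} homogeneous transmission problem there (zero Neumann datum on $\Gamma_h$, continuity of displacement and of the normal traction across $\{y=0\}$) with $v=u$ on $\partial B(z_0,r)\cap\Omega_h$, and $w:=u-v$ carries the mismatch data and vanishes on $\partial B(z_0,r)\cap\Omega_h$. Testing the equation for $w$ with $w$, using $\mathcal{H}^1(\partial\Omega_h\cap B(z_0,r))\le Cr$, a trace inequality, Poincar\'e (legitimate since $w=0$ on a portion of the boundary of length bounded below by $c\,r$) and Korn's inequality on the uniformly Lipschitz domain $B(z_0,r)\cap\Omega_h$, gives $\int_{B(z_0,r)\cap\Omega_h}|\nabla w|^2\le Cr^{\gamma}$ with $\gamma>1$ (in fact $\gamma=2$). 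It then suffices to prove the decay $\int_{B(z_0,\rho)\cap\Omega_h}|\nabla V|^2\le C\,(\rho/\sigma)^{2\alpha'}\int_{B(z_0,\sigma)\cap\Omega_h}|\nabla V|^2$ for $0<\rho\le\sigma\le r_1$ and every weak solution $V$ of the homogeneous transmission problem on $B(z_0,r_1)\cap\Omega_h$, with $\alpha'\in(1/2,1)$ and $C$ independent of $V$: indeed, feeding this together with the $w$-bound into $\omega(\rho)\le 2\int_{B(z_0,\rho)}|\nabla v|^2+2\int_{B(z_0,\rho)}|\nabla w|^2$ produces $\omega(\rho)\le C(\rho/r)^{2\alpha'}\omega(r)+Cr^{\gamma}$, and the usual iteration lemma yields $\omega(r)\le Cr^{2\alpha}$ for all $0<r<r_0$ (after, if needed, slightly lowering $\alpha'$ so that $2\alpha'<\gamma$, keeping $\alpha'>1/2$).

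The decay for $V$ is where the blow-up and Proposition~\ref{prop:lame-systems3} enter, and it is enough to prove a one-step improvement: there exist $\tau\in(0,1/2)$, $\alpha'\in(1/2,1)$, $r_1>0$ with $\int_{B(z_0,\tau\sigma)\cap\Omega_h}|\nabla V|^2\le\tau^{2\alpha'}\int_{B(z_0,\sigma)\cap\Omega_h}|\nabla V|^2$ for all such $V$ and all $\sigma\le r_1$; one then iterates in $\sigma$. Suppose it fails: there are homogeneous solutions $V_j$ and scales $\sigma_j\to0$ with $\int_{B(z_0,\tau\sigma_j)}|\nabla V_j|^2>\tau^{2\alpha'}\int_{B(z_0,\sigma_j)}|\nabla V_j|^2$. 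Rescaling by $\sigma_j$, subtracting the $L^2$-projection onto infinitesimal rigid motions and normalizing the gradient on $B_1$, the rescaled domains $\sigma_j^{-1}(\Omega_h-z_0)$ converge in the Hausdorff sense (along a subsequence) to a cone $\mathcal{C}$; by the interior ball condition $\mathcal{C}$ contains a half-plane, and because the Lipschitz slope at $z_0$ is finite its opening is bounded away from $2\pi$, so $\mathcal{C}$, together with the line $\{y=0\}$ as an interior transmission interface, has exactly the structure required in Proposition~\ref{prop:lame-systems3}. By interior Caccioppoli estimates, the scale-invariant Korn inequality, and boundary regularity for the traction-free problem on uniformly Lipschitz sets, the rescaled maps converge — weakly in $H^1_{\rm loc}$ and strongly in $H^1$ on $B_{1/2}\cap\mathcal{C}$ — to a \emph{nontrivial} weak solution $V_0$ of the homogeneous Lam\'e transmission problem on $\mathcal{C}$ (nontriviality follows from $\int_{B_\tau\cap\mathcal{C}}|\nabla V_0|^2\ge\tau^{2\alpha'}>0$, and the mismatch/boundary data scale to zero). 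Since $V_0$ is analytic in the interior of $\mathcal{C}$ and up to the smooth parts of $\partial\mathcal{C}$ away from the apex, the outer boundary data it induces is smooth, so Proposition~\ref{prop:lame-systems3} applies and gives $V_0\in H^{3/2+\varepsilon}$ near the origin; as the admissible limiting cones (for our fixed $z_0$) range over a compact family of openings in $[\pi,\omega_{\max}]$, $\omega_{\max}<2\pi$, and interface orientations, the quasi-monotonicity condition \eqref{monotonicity} and \cite{KS} yield a \emph{uniform} spectral gap $\varepsilon\ge\varepsilon_0>0$ and a uniform estimate, whence the planar embedding $H^{1/2+\varepsilon_0}\hookrightarrow L^{4/(1-2\varepsilon_0)}$ gives $\int_{B_\rho\cap\mathcal{C}}|\nabla V_0|^2\le C_0\rho^{1+2\varepsilon_0}$ for small $\rho$ with $C_0$ independent of $\mathcal{C}$. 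Choosing, from the outset, $\alpha':=1/2+\varepsilon_0/2$ and $\tau$ so small that $C_0\tau^{\varepsilon_0}<1$ contradicts the lower bound on $\int_{B_\tau\cap\mathcal{C}}|\nabla V_0|^2$, proving the one-step improvement, and with it the proposition.

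The main obstacle is the soft-analysis bookkeeping of this blow-up rather than any new spectral computation: establishing Hausdorff convergence of the rescaled domains to a cone of the precise form admitted by Proposition~\ref{prop:lame-systems3}, the scale-invariant Korn and interior-ball estimates along the sequence, strong $H^1$ convergence up to the merely Lipschitz free boundary (needed both to show $V_0\neq0$ and to transfer the decay of $V_0$ back to the rescaled solutions), and the uniformity of $\varepsilon_0$ and $C_0$ over the family of limiting cones — this last point, as in \cite{FFLM2}, being the reason $z_0\notin\Gamma_h^{cusp}\cup\Gamma_h^{cut}$ is needed, since it forces the cone openings to stay below $2\pi$. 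Once these are in place, the key spectral input — that, under \eqref{monotonicity}, the operator pencil of the Lam\'e transmission problem on a cone containing a half-plane (and with opening $<2\pi$) has no eigenvalue with real part in $(0,1/2+\varepsilon_0]$ — is exactly what Proposition~\ref{prop:lame-systems3} (via \cite{KS}) supplies.
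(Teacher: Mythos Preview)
Your proposal is correct and reaches the same endpoint (Proposition~\ref{prop:lame-systems3} applied to the blow-up limit on a cone, followed by iteration), but it is organized differently from the paper. The paper does \emph{not} split $u=v+w$ to peel off the mismatch strain; instead it observes that either $\limsup_{r\to 0}r^{-2}\int_{B(z_0,r)\cap\Omega_h}|\nabla u|^2<\infty$ (and then the conclusion holds with $\alpha=1$), or one can blow up $u$ itself, normalizing by $\lambda_n:=\big(|C(z_0,r_n)|^{-1}\int_{C(z_0,r_n)}|\nabla u|^2\big)^{1/2}\to\infty$. Because $\lambda_n\to\infty$, the rescaled Euler--Lagrange equation carries the $E_0$-term with a prefactor $1/\lambda_n$, so the inhomogeneity disappears in the limit and $u_\infty$ already solves the \emph{homogeneous} Lam\'e transmission problem on the tangent cone; Proposition~\ref{prop:lame-systems3} is then applied directly to $u_\infty$. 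Strong $H^1$ convergence up to the Lipschitz free boundary is proved explicitly (the paper's Step~2, via testing the equation and Korn's inequality), and the passage from the one-scale estimate to the full decay is outsourced to \cite[Theorem~3.13, Step~6]{FFLM2}.

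Your Campanato-style $v+w$ decomposition is a legitimate alternative and arguably more robust, since it does not hinge on the dichotomy $\lambda_n\to\infty$ versus not; the paper's route is more economical in that it avoids solving an auxiliary mixed Dirichlet/Neumann/transmission problem at every scale and the attendant uniform Korn and trace constants. Two minor points on your write-up: for a \emph{fixed} $z_0$ the limiting cone is unique (it is determined by $h'_\pm(x_0)$, which exist by Proposition~\ref{prop:loc-lip}), so uniformity over a ``compact family of cones'' is not needed here; and the strong $H^1$ convergence up to the boundary that you invoke is a genuine step---it is precisely what the paper isolates as its Step~2.
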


\begin{proof}

We begin by considering the case in which $h(x_0)=0$. If there exists a constant $C$ such that
$$\limsup_{r\to 0}\frac{1}{r^2}\int_{B(z_0,r)\cap\Omega_h}|\nabla u|^2\,dx\,dy\leq C,$$
then there is nothing to prove. Thus, we assume that this does not hold and that there exists a sequence $\{r_n\}\subset \R$ such that $r_n\to 0$ and
\be{eq:bu-contradiction}
\limsup_{n\to +\infty}\frac{1}{r_n^2}\int_{B(z_0,r_n)\cap\Omega_h}|\nabla u|^2\,dx\,dy=+\infty.
\ee
We subdivide the proof into three steps.\\

\noindent\textbf{Step 1}: We claim that there exist an orthonormal basis $\{v_1,v_2\}$ of $\R^2$, three constants $C_1,L>0$, $\tau_0\in (0,1)$, and an exponent $\frac12<\beta<1$ such that for all $\tau\in (0,\tau_0)$ there exists a radius $0<r_{\tau}<1$ satisfying
\be{eq:bu-Step1-to-prove}
\int_{C(z_0,\tau r_n)}|\nabla u|^2\,dx\,dy\leq C_1 \tau^{2\beta}\int_{C(z_0,r_n)}(1+|\nabla u|^2)\,dx\,dy,
\ee
for all $0<r_n<r_{\tau}$, where
$$C(z_0,r_n):=\Omega_h \cap\{z_0+sv_1+tv_2:\,-r_n<s<r_n,\,-4Lr_n<t<4Lr_n\}.$$
We point out that, once claim \eqref{eq:bu-Step1-to-prove} is proved, the assert of the theorem follows arguing as in \cite[Theorem 3.13, Step 6]{FFLM2}.

To prove \eqref{eq:bu-Step1-to-prove} we first observe that, since $z_0\in \Gamma_h^{reg}$, we can apply Proposition \ref{prop:loc-lip} to obtain a Lipschitz function $g:(-a',a')\to (-b',b')$ with ${\rm Lip }\,g\leq L$ for some $L>1$ such that $g(0)=0$, and
$$\Omega_h\cap Q=\{z_0+sv_1+tv_2:\,-a'<s<a',\,-b'<t<g(s)\},$$
where
$$Q=\{z_0+sv_1+tv_2:\,-a'<s<a',\,-b'<t<b'\}.$$
Note that $g$ has left (right) derivative in every point that is left (right) continuous. By Korn inequality in Lipschitz domains we deduce that $u\in H^1(\Omega_h\cap Q;\R^2)$. If $r_n\leq \min\{a',\tfrac{b'}{4L}\}$, then $C(z_0,r_n)\subset Q\cap \Omega_h$. Therefore,
$$C(z_0,r_n)=\{z_0+sv_1+tv_2:\,-r_n<s<r_n,\,-4L r_n<t<g(s)\}.$$
Fix $C_1>0$, $\tau_0\in (0,1)$ and $\beta>\frac12$ to be determined later, and assume by contradiction that \eqref{eq:bu-Step1-to-prove} is false for some $\tau\in (0,\tau_0)$. Up to the extraction of a (non-relabeled) subsequence there holds
\be{eq:Step1-cons1}
\int_{C(z_0,\tau r_n)}|\nabla u|^2\,dx\,dy>C_1\tau^{2\beta}\int_{C(z_0,r_n)}(1+|\nabla u|^2)\,dx\,dy,
\ee
for a sequence $r_n\to 0$.
Define the sets
$$C_n:=\frac{1}{r_n}(-z_0+C(z_0,r_n))=\left\{sv_1+tv_2:\,-1<s<1,\,-4L<t<\frac{g(r_n s)}{r_n}\right\}.$$
We have
\be{eq:bu-conv-sets}\chi_{C_n}\to \chi_{C_{\infty}}\quad\text{a.e. in }\R^2,\ee
where
$$C_{\infty}:=\{sv_1+tv_2:\,-1<s<1,\,-4L<t<g_{\infty}(s)\},$$
the function $g_{\infty}$ is defined as
$$g_{\infty}(s):=\begin{cases}g'_{-}(0)s&\text{for }s<0,\\
g'_{+}(0)s&\text{for }s>0,\end{cases}$$
and $\chi_{C_n}$ and $\chi_{C_{\infty}}$ are the characteristic functions of the sets $C_n$ and $C_{\infty}$, respectively.

Define the maps
$$u_n(z):=\frac{u(z_0+r_n z)-a_n}{\lambda_n r_n},\quad \text{for every }z\in C_n,$$
where
\be{eq:bu-def-quant}
a_n:=\frac{1}{|C(z_0,r_n)|}\int_{C(z_0,r_n)}u(x,y)\,dx\,dy,\quad\lambda_n^2:=\frac{1}{|C(z_0,r_n)|}\int_{C(z_0,r_n)}|\nabla u|^2\,dx\,dy.\ee
We point out that
\be{eq:bu-w12-un}\frac{1}{|C_n|}\int_{C_n}|\nabla u_n|^2\,dz=\frac{1}{\lambda_n^2|C(z_0,r_n)|}\int_{C(z_0,r_n)}|\nabla u|^2\,dx\,dy=1\ee
and
\begin{align*}
&\int_{C_n}u_n\,dz=\frac{1}{\lambda_n r_n}\int_{C_n}u(z_0+r_n z)\,dx-\frac{a_n|C_n|}{\lambda_n r_n}\\
&\qquad=\frac{1}{\lambda_n r_n^3}\Big(\int_{C(z_0,r_n)}u\,dx\,dy-{a_n|C(z_0,r_n)|}\Big)=0.
\end{align*}
Extend the maps $u_n$ to the rectangle 
$$R:=\{sv_1+tv_2:\,-1<s<1,\,-4L<t<4L\}$$
so that $u_n\in W^{1,2}(R;\R^2)$. By \eqref{eq:bu-w12-un} we obtain the uniform bound
$$\|u_n\|_{W^{1,2}(R;\R^2)}\leq C\|\nabla u_n\|_{L^2(C_n;\M^{2\times 2})}\leq C.$$
Thus, there exist $u_{\infty}\in W^{1,2}(R;\R^2)$, and $\lambda_{\infty}\in [0,+\infty]$ such that, up to the extraction of a (non-relabelled) subsequence, there holds
\be{eq:bu-w-un}u_n\wk u_{\infty}\quad\text{weakly in }W^{1,2}(R;\R^2),\ee
and
\be{eq:bu-conv-lambdan}
\lambda_n\to \lambda_{\infty}.
\ee
In addition,
\begin{align*}
&\frac{1}{r_n^2}\int_{B(x_0,r_n)\cap \Omega_h}|\nabla u|^2\,dx\,dy\leq \frac{1}{r_n^2}\int_{C(x_0,r_n)}|\nabla u|^2\,dx\,dy=\lambda_n^2 \frac{|C(z_0,r_n)|}{r_n^2}\\
&\quad=\lambda_n^2 |C_n|\leq 12 L \lambda_n^2.
\end{align*}
Hence, by \eqref{eq:bu-contradiction} we conclude that 
\be{eq:bu-lambda-infty}\lambda_{\infty}=+\infty.\ee

In view of a change of variable, the maps $u_n$ satisfy the Euler-Lagrange equations
$$\int_{C_n}E\varphi(z):\C(r_n z_2)Eu_n(z)\,dz=\frac{1}{\lambda_n}\int_{C_n}E\varphi(z):\C(r_n z_2)E_0(r_n z_2)\,dz$$
for every $\varphi\in C^1_0(R;\R^2)$. 
Thus, by \eqref{eq:bu-conv-sets}, \eqref{eq:bu-w-un}, \eqref{eq:bu-conv-lambdan}, and \eqref{eq:bu-lambda-infty} we deduce that
\be{eq:bu-limit-Euler}
\int_{C_{\infty}}E\varphi(z):\C(z_2)Eu_{\infty}(z)\,dz=0\quad\text{for every }\varphi\in C^1_0(R;\R^2).
\ee

\textbf{Step 2}:
Fix a ball $B$ such that
$$B\subset\subset \{sv_1+tv_2:\,-1<s<1,\,-4L<t<-3L\}.$$
We claim that 
\be{eq:bu-claim}
\lim_{n\to +\infty}\int_{C_n}\psi^2|\nabla u_n-\nabla u_{\infty}|^2\,dz=0
\ee
for every $\psi\in C^1_0(R)$ vanishing in $B$. Arguing as in \cite[Theorem 3.13, Step 2]{FFLM2} we obtain that
$$\lim_{n\to +\infty}\int_{C_n}\psi^2(z) (Eu_n(z):\C(r_nz_2)Eu_n(z)-Eu_{\infty}(z):\C(z_2)Eu_{\infty}(z))\,dz=0,$$
hence
$$\lim_{n\to +\infty}\int_{C_n}E(\psi(z)(u_n(z)-u_{\infty}(z))):\C_{\infty}(z)E(\psi(z)(u_n(z)-u_{\infty}(z)))\,dz=0.$$
Claim \eqref{eq:bu-claim} follows then from Korn's inequality (see \cite[Theorem 4.2]{FFLM2}).\\

\textbf{Step 3}: by Step 1, we deduce that $u_{\infty}$ is a weak solution of the transmission problem
\begin{subequations}
\label{eq:transmission-problem}
\begin{align}
\nn & \mu_f \Delta u_{\infty}^+ +(\lambda_f+\mu_f)\nabla ({\rm div}\,u_{\infty}^+)=0&\text{in }C_{\infty}^+,\\
\nn & \mu_s \Delta u_{\infty}^- +(\lambda_s+\mu_s)\nabla ({\rm div}\,u_{\infty}^-)=0&\text{in }C_{\infty}^-,\\
\nn & (2\mu_f Eu_{\infty}^+ +\lambda_f ({\rm div }\,u_{\infty}^+)Id)\nu_{\infty}=0&\text{on }\Gamma_{g_{\infty}},\\
\nn & \label{eq:u1} u_{\infty}^+-u_{\infty}^-=0&\text{on }\{z_2=0\},\\
\nn &(2\mu_f Eu_{\infty}^+-2\mu_s Eu_{\infty}^- +\lambda_f ({\rm div }\,u_{\infty}^+)Id-\lambda_f ({\rm div }\,u_{\infty}^-)Id)e_2=0&\text{on }\{z_2=0\},
\end{align}
\end{subequations}
where $C_{\infty}^+:=C_{\infty}\cap \{z_2>0\}$, $C_{\infty}^-:=C_{\infty}\cap \{z_2<0\}$, $\Gamma_{g_{\infty}}:=\{(s,g_{\infty}(s)):-1<s<1\}$, $u_{\infty}^+:=u_{\infty}|_{C_{\infty}^+}$, $u_{\infty}^-:=u_{\infty}|_{C_{\infty}^-}$, and $\nu_{\infty}$ is the outer unit normal to $\Gamma_{g_{\infty}}$, wherever it exists. Note that the fourth condition in \eqref{eq:transmission-problem} holds because Sobolev maps are absolutely continuous on almost every line, whereas the other equations in \eqref{eq:transmission-problem} are a consequence of \eqref{eq:bu-limit-Euler}.

In view of \eqref{monotonicity} and the geometry of the problem we can apply Proposition \ref{prop:lame-systems3} with $R\leq 1$ to $u_{\infty}$, with $\theta_2=\pi$, $\mu_1=\mu_f$, $\lambda_1=\lambda_f$, $\mu_2=\mu_s$, $\lambda_2=\lambda_s$, and with data $f_i=0$, and 
$$g_i:=[2\mu_i Eu_{\infty}+\lambda_i ({\rm div }\,u_{\infty})Id\,]\,\nu_i\quad\text{on }\Gamma_i,\,i=1,2,3,$$
where $\Gamma_1=\partial B(O,R)\cap C_{\infty}^+\cap\{z_1<0\}$, $\Gamma_2=\partial B(O,R)\cap C_{\infty}^-$, and $\Gamma_3=\partial B(O,R)\cap C_{\infty}^+\cap\{z_1>0\}$ where $O$ denotes the origin $(0,0)$.
Therefore, we conclude that there exists a ball $B\subset B(O,R)$ centered in the origin, and such that
$$u_{\infty}\in H^{3/2+\ep}\big(B\cap \big(C_{\infty}^+\cup C_{\infty}^-\big);\R^2\big).$$
Thus, by H\"older inequality we obtain
\begin{align*}
&\int_{B(O,r)\cap C_{\infty}}|\nabla u_{\infty}|^2\,dz=\int_{B(O,r)\cap C_{\infty}^+}|\nabla u_{\infty}|^2\,dz+\int_{B(O,r)\cap C_{\infty}^-}|\nabla u_{\infty}|^2\,dz\\
&\quad\leq r^{2-\frac4s}\big(\|\nabla u\|^2_{L^s({B}\cap C_{\infty}^+;\M^{2\times 2})}+\|\nabla u\|^2_{L^s({B}\cap C_{\infty}^-;\M^{2\times 2})}\big)\leq Cr^{2\beta}
\end{align*}
for every $r>0$ small enough, where $4<s<\frac{4}{1-2\ep}$, and $\beta=1-\frac{2}{s}>\frac12$, where we used the fact that 
$$H^{3/2+\ep}\big(B\cap \big(C_{\infty}^+\cup C_{\infty}^-\big);\R^2\big)\subset L^s\big(B\cap \big(C_{\infty}^+\cup C_{\infty}^-\big);\R^2\big)$$ 
for every $s\in \Big[1,\frac{4}{1-2\ep}\Big]$.
Choosing $\tau_0$ such that
$$\tau_0 C_{\infty}\subset (B(O,1)\cap C_{\infty})\setminus \{se_1+te_2:\,-1<s<1,\,-4L<t<-3L\},$$
by Step 2 we deduce that for $0< \tau\leq \tau_0$ there holds
\begin{align*}
&\lim_{n\to +\infty}\frac{\int_{C(z_0,\tau r_n)}|\nabla u|^2\,dx\,dy}{\int_{C(z_0,r_n)}|\nabla u|^2\,dx\,dy}=\frac{1}{|C_{\infty}|}\lim_{n\to +\infty}\int_{\tau C_n}|\nabla u_n|^2\,dz\\
&=\frac{1}{|C_{\infty}|}\int_{\tau C_{\infty}}|\nabla u_{\infty}|^2\,dz\leq \frac{1}{|C_{\infty}|}\int_{B\big(O,\frac{\tau}{\tau_0}\big)\cap C_{\infty}}|\nabla u_{\infty}|^2\,dz\leq \frac{C_2 \tau^{2\beta}}{|C_{\infty}|}.
\end{align*}
This leads to a contradiction to \eqref{eq:Step1-cons1} provided that $C_1\geq \frac{C_2 \tau^{2\beta}}{|C_{\infty}|}$, and thus completes the proof of \eqref{eq:bu-Step1-to-prove} in the case $h(x_0)=0$.
The same argument works for $h(x_0)>0$ by noticing that in this latter scenario after the blow-up $\C_{\infty}(h(x_0)+r_n\cdot)\equiv \C_f$ (see also \cite[Theorem 3.13]{FFLM2}). 
\end{proof}

\section{Contact-Angle conditions}  \label{sec:YD}  
This section is devoted to the proof of Theorem \ref{thm:YDlaw}. For every profile function $h$ we denote by $h'_{-}(x)$ and $h'_{+}(x)$, respectively, the left and right derivative of $h$ in $x\in[a,b]$, whenever they exist. In the following we denote by $\theta^*$ the angle
\be{eq:def-theta-star-prel}\theta^*:=\arccos{\beta},\ee
where $\beta$ is the quantity defined in \eqref{eq:beta}. We first provide a preliminary characterization of contact-angle conditions.
\begin{proposition}
\label{pro:YDlawold} Assume that the  Lam\'e coefficients of the film and the substrate satisfy 
\be{monotonicity}\mu_s\geq \mu_f>0\quad\text{and}\quad\mu_s+\lambda_s\geq\mu_f+\lambda_f>0.
\ee

Then, for every $\mu$-local minimizer $(u,h)\in X$ of $\mathcal{F}$ and for $z_0:=(x_0,0)\in Z_h\cap\Gamma_h^{reg}$ the following asserts hold true:
\begin{enumerate}
\item[1.] For every $x_0\in P_h$ we have that  $\theta^{-}(x_0),\theta^+(x_0)\in [0,\theta^*]$ and, if $\theta^{-}(x_0)=\theta^+(x_0)$ then $\theta^{-}(x_0)=\theta^+(x_0)=0$,
\item[2.] For any $(c,d)\in I_h$, there holds $\theta^{-}(c),\theta^+(d)\in [0,\theta^*]$.
\end{enumerate}
 Additionally, $\Gamma_h^{jump}$ satisfies the following property
\begin{enumerate}
\item[3.]  If $\beta\neq 0$, then  $\Gamma_h^{jump}\cap Z_h= \emptyset$.
\end{enumerate}
\end{proposition}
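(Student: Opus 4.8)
The plan is to establish the three assertions by a blow-up analysis at the contact point $z_0=(x_0,0)$, combined with carefully chosen one-sided competitors obtained by perturbing only the profile function $h$. The core idea is that Proposition~\ref{thm:blow} gives a decay estimate $\int_{B(z_0,r)\cap\Omega_h}|\nabla u|^2\,dx\,dy\le Cr^{2\alpha}$ with $\alpha>1/2$, which guarantees that the elastic energy contribution of a small modification near $z_0$ is of order $o(r)$, i.e.\ negligible compared with the surface-energy contributions, which are of order $r$. Hence, to leading order the minimality of $(u,h)$ reduces to a purely geometric minimality for the surface-energy functional, and the contact-angle conditions follow from comparing the surface densities $\gamma_f$ (above the $x$-axis), $\varphi(0)=\min\{\gamma_f,\gamma_s-\gamma_{fs}\}$ (on the $x$-axis), and $\gamma_{fs}$ (the substrate/gas term, paid per unit horizontal length).

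First I would treat assertion~1 (valleys, $x_0\in P_h$). Near an isolated zero the graph looks locally like two arcs emanating from $(x_0,0)$ with left/right slopes determined by $h'_-(x_0)$, $h'_+(x_0)$, making angles $\theta^-(x_0),\theta^+(x_0)$ with the $x$-axis. For $r$ small I would build a competitor $g$ that ``fills in'' or ``cuts'' a small wedge near $x_0$: concretely, replace $h$ on $(x_0-r,x_0+r)$ by a profile that either lifts the film off the axis (introducing a small segment of film/substrate interface along the $x$-axis, paying $\varphi(0)$ per unit length instead of $\gamma_f$ along the graph) or, conversely, straightens the two arcs. The volume constraint is restored by a higher-order correction far from $z_0$, contributing only $O(r^2)$ to the energy via Proposition~\ref{prop:penalization}. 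Writing out the first-order expansion of $\mathcal F(u,g)-\mathcal F(u,h)$ in $r$ — using the decay estimate to discard the elastic terms — yields an inequality of the form $\gamma_f(\text{length of graph arcs}) \le \varphi(0)(\text{length of the axis segment}) + (\text{lower order})$, which after dividing by $r$ and letting $r\to0$ forces $\gamma_f\cos\theta^\pm(x_0)\ge\varphi(0)$, i.e.\ $\theta^\pm(x_0)\le\arccos\beta=\theta^*$. The refinement ``if $\theta^-(x_0)=\theta^+(x_0)$ then both are $0$'' comes from a competitor that is symmetric about $x=x_0$: if the two angles are equal one can slide the valley bottom up along the axis, and unless the common angle is zero this strictly decreases the surface energy (the graph length exceeds the replaced axis length unless the graph is horizontal), contradicting minimality.

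For assertion~2 (island borders, $(c,d)\in I_h$), the geometry is one-sided: by \eqref{eq:s8star} the graph is flat (angle $0$) on the interior of $[c,d]$ and detaches with angle $\theta^-(c)$ on the left of $c$ (resp.\ $\theta^+(d)$ on the right of $d$). I would use the same blow-up and a one-sided competitor that shaves a wedge off the island near $c$, converting a small piece of graph of length $\sim r/\cos\theta^-(c)$ into a horizontal segment along $\{y=0\}$ of length $\sim r$; minimality again gives $\gamma_f\cos\theta^-(c)\ge\min\{\gamma_f,\gamma_s-\gamma_{fs}\}$, hence $\theta^-(c)\le\theta^*$, and symmetrically for $\theta^+(d)$. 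Assertion~3 (no jumps at zeros when $\beta\ne0$) I would prove by contradiction: a jump in $\Gamma_h^{jump}\cap Z_h$ is a vertical segment of positive length $\ell$ sitting on the $x$-axis, contributing at least $\gamma_f\cdot\ell$ (if $y>0$) to $\mathcal F$ via the $\int_{\tilde\Gamma_h}\varphi$ term; replacing it with a short slanted connection that stays within the subgraph and keeps the volume fixed decreases the profile length strictly (a chord is shorter than the broken path through the jump), and since $\beta\ne0$ means $\varphi(0)=\min\{\gamma_f,\gamma_s-\gamma_{fs}\}>0$ so that detaching is not ``free'', the net surface-energy change is strictly negative while the elastic change is $o(\ell)$ by the decay estimate — contradicting $\mu$-local minimality. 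The main obstacle I anticipate is the bookkeeping in the competitor construction: one must (i) ensure the competitor lies in $X$ and respects $h\in AP(a,b)$ (lower semicontinuity, finite pointwise variation), (ii) handle the volume constraint cleanly through the penalization of Proposition~\ref{prop:penalization} so that the correction is genuinely higher order, and (iii) verify that the decay estimate of Proposition~\ref{thm:blow} applies at $z_0$, which requires $z_0\in\Gamma_h^{reg}=\Gamma_h\setminus(\Gamma_h^{cut}\cup\Gamma_h^{cusp})$ — this is exactly the hypothesis, but one must check the local Lipschitz charts of Proposition~\ref{prop:loc-lip} are compatible with the wedge modifications near a point on the $x$-axis, including the degenerate case where the internal-ball condition of Proposition~\ref{thm:palla-interna} pins the geometry.
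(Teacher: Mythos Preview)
Your overall strategy---blow up at $z_0$, use the decay estimate of Proposition~\ref{thm:blow} to discard elastic contributions, handle the volume constraint via the penalization of Proposition~\ref{prop:penalization}, and reduce to a surface-energy comparison against profile competitors---is exactly the paper's approach. However, the specific competitors you describe point in the wrong direction and would not yield the claimed inequalities.

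For island borders (Assertion~2) you propose to \emph{shave} material off the island, converting a slanted arc of length $\sim r/\cos\theta$ into an axis segment of length $\sim r$. Once the unavoidable connecting piece (a jump or steep arc) is included, minimality of $(u,h)$ against such a competitor gives $\sin\theta+\beta\cos\theta\ge 1$, which is a \emph{lower} bound on $\theta$, not the upper bound $\theta\le\theta^*$ you need. The paper instead assumes $\theta>\theta^*$ and \emph{adds} material: it fills in with a piecewise-linear wedge $\psi_\delta$ of slope exactly $\tan\theta^*$ that extends past $c$ and covers part of the existing axis segment. After the blow-up limit, the resulting surface-energy balance reduces algebraically to $(\beta\tan\theta-\sqrt{1-\beta^2})^2\le 0$, forcing $\theta=\theta^*$ and hence a contradiction. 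The fill-in direction is essential because one is trying to show the \emph{steep} arc is not optimal; the decay estimate is precisely what controls the elastic cost of the added region.

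For valleys (Assertion~1) the description is similarly off: at an isolated zero $x_0\in P_h$ there is no axis segment in $\Gamma_h$ near $x_0$, so ``introducing a segment paying $\varphi(0)$'' does not arise and cannot produce $\gamma_f\cos\theta^\pm\ge\varphi(0)$ directly. The paper's argument has two steps. First, assuming both angles are nonzero, one fills the valley with a flat top at height $\delta$; the comparison is then purely between $\gamma_f$-weighted arcs and yields $g'_-(0)\ge g'_+(0)$, contradicting $g'_-(0)<0<g'_+(0)$. Hence at least one contact angle vanishes. Second, with (say) $\theta^+(x_0)=0$, the same $\theta^*$-wedge fill-in as for island borders bounds the remaining angle by $\theta^*$. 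The equal-angles clause follows because one angle is already zero. Your ``slide the valley up'' heuristic is essentially the first step, but it does not by itself give $\theta^\pm\le\theta^*$; the second step is indispensable. Assertion~3 is handled by the same fill-in mechanism applied to the degenerate cases $(c_2)$, $(c_3)$ where one one-sided derivative is infinite.
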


\begin{proof}
Let $(u,h)$ be a $\mu$-local minimizer of $\mathcal{F}$, and let $z_0=(x_0,h(x_0))\in Z_h\cap (\Gamma_h^{reg}\cup \Gamma_h^{jump})$.  As a consequence of Assertion 1. of Proposition \ref{prop:loc-lip} there exist $a'>0$ and $b'>0$ such that the function $g:(-a',a')\to (-b',b')$ defined as 
$$g(x):=h(x)-h(x_0)\quad\text{for every }x\in (-a',a')$$ 
satisfies one of the following conditions:
\begin{itemize}
\item[($c_1$)] $g$ is a Lipschitz function in $(-a',a')$ with Lipschitz constant ${\rm Lip}\, g\leq L$ for some $L>1$;
\item[($c_2$)] $g$ is a  Lipschitz function in $(-a',0)$ with Lipschitz constant ${\rm Lip}\, g\leq L$ for some $L>1$, and $g'_{+}(0)=\infty$;
\item[($c_3$)] $g$ is a Lipschitz function in $(0,a')$ with Lipschitz constant ${\rm Lip}\, g\leq L$ for some $L>1$, and $g'_{-}(0)=-\infty$.
\end{itemize}
\noindent  We also point out that in view of the internal-ball condition (see Proposition \ref{thm:palla-interna}), under condition ($c_1$), the angle between $g_{-}'(0)$ and $g'_{+}(0)$ intersecting $\Omega_h^-$ is always in the interval $[\pi,2\pi)$.

In the following we denote the intersection of a given a set $U$ with the half-planes $\{x<0\}$ and $\{x>0\}$ by  $U^{2}:=U\cap\{x<0\}$ and $U^{3}:=U\cap\{x>0\}$, respectively. We also set $U^1:=U$.

Choose an infinitesimal sequence $r_n\to 0$, and consider the sets
$$C(z_0,r_n):=\{z_0+(x,y)\in \R^2:\,-r_n<x<r_n,\,-4L r_n<y<g(x)\},$$ 
and 
$$C_n:=\frac{1}{r_n}(C(z_0,r_n)-z_0)=\left\{z\in \R^2:-1<z_1<1, -4L<z_2<\frac{g(r_n z_1)}{r_n}\right\}.$$
We observe that for $k=1,2,3$ we have that $\chi_{C_n^k}\to \chi_{C_{\infty}^k}$ a.e., where 
$$C_{\infty}:=\left\{z\in \R^2:-1<z_1<1, -4L<z_2<g_{\infty}(z_1)\right\},$$
the function $g_{\infty}:(-1,1)\to \R$ is defined as
$$g_{\infty}(z_1):=\begin{cases}
g'_{-}(0)z_1&\text{ for }z_1<0,\\
g'_{+}(0)z_1&\text{ for }z_1>0.
\end{cases},$$
and $\chi_{C_n^k},\, \chi_{C_{\infty}^k}$ denote the characteristic functions of the sets $C_n^k$ and $C_{\infty}^k$, respectively.
In particular, $C_{\infty}^k\subset R^k$ for $k=1,2,3$, where
$$R:=\{z\in \R^2:-1<z_1<1,\,-4L<z_2<4L\}.$$
With a slight abuse of notation under each condition ($c_k$) we identify the map $u$ with its $H^1$-extension to the set $\Omega_h \cup R^k(z_0,r_n)$, where
$$R(z_0,r_n):=z_0+r_n R.$$
Note that this extension is well-defined owing to Assertion 1. of Proposition \ref{prop:loc-lip}, which guarantees that the graph of $h$, aside from cusps and cuts, is locally Lipschitz.

 Let $I^k$ be defined as
 $$I^k:=\begin{cases}
 (-1,1)&\text{if }k=1,\\
 (-1,0)&\text{if }k=2,\\
 (0,1)&\text{if }k=3.
 \end{cases}$$
For every $0<\delta<1$ under each condition ($c_k$) we consider a function $\psi_{\delta}\in W^{1,\infty}(I^k)$ to be specified later, satisfying the following properties
\begin{align}
&\label{P1add}\sup_{\delta}\|\psi_{\delta}\|_{L^{\infty}(I^k)}\leq C,\\
&\label{P2add}0\leq g_{\infty}(z_1)+\delta \psi_{\delta}(z_1)< 4L\quad\text{for every }z_1\in I^k,\\
&\label{P3add}{\rm supp }\,\psi_{\delta}=[x_{\delta}^-,x_{\delta}^+],
\end{align}
with
\be{supppsidelta}\begin{cases}
x_{\delta}^-<0<x_{\delta}^+&\text{for condition }(c_1),\\
x_{\delta}^-<0\text{ and }x_{\delta}^+=0&\text{for condition }(c_2),\\
x_{\delta}^-=0\text{ and }x_{\delta}^+>0&\text{for condition }(c_3).
\end{cases}\ee

Define the maps
$$\psi_{\delta}^n(x):=\begin{cases} r_n\psi_{\delta}\left(\frac{x-x_0}{r_n}\right)&\text{for every }x\in [x_0+r_n x_{\delta}^-,x_0+r_nx_{\delta}^+],\\
0&\text{otherwise in }(a,b).\end{cases}$$
Note that for $r_n$ small enough,
$$\Omega_{h+\delta\psi_{\delta}^n}\subset \Omega_h\cup R^k(z_0,r_n),$$
and
\be{eq:star35}|\Omega_{h+\delta\psi_{\delta}^n}\Delta \Omega_h|\leq \frac{\mu}{2}.\ee

 By Proposition \ref{prop:penalization} there exists $\lambda_0>0$ such that
$$\mathcal{F}(u, h)=\min\left\{\mathcal{F}(v, \tilde{h})+\lambda| |\Omega_h^+|-|\Omega_{\tilde{h}}^+||:\,(v,\tilde{h})\in X,\,|\Omega_{\tilde{h}}\Delta \Omega_h|\leq \frac{\mu}{2}\right\}$$
for all $\lambda\geq \lambda_0$. In the following we denote by $\mathcal{G}$ the volume-penalized functional defined as
\be{eq:energyg}\mathcal{G}(v, \tilde{h}):=\mathcal{F}(v, \tilde{h})+\lambda_0||\Omega_h^+|-|\Omega_{\tilde{h}}^+||\ee
for every $(v,\tilde{h})\in X$. By the minimality of $(u,h)$, and by \eqref{filmenergyfinal}, \eqref{eq:cuts}, \eqref{eq:beta}, \eqref{eq:star35}, and \eqref{eq:energyg}, there holds
 \be{eq:el}0\leq \frac{\mathcal{G}(u,h+\delta\psi_{\delta}^n)-\mathcal{G}(u,h)}{\delta r_n}:=A^1_n-A^2_n+B_n+D_n+E_n,\ee
 where
 \begin{align}
 &\label{eq:A1}A^1_n:=\frac{1}{\delta r_n}\int_{\Omega_{h+\delta\psi_{n}}\setminus\Omega_h}W_0(y,Eu(x,y)-E_0(y))\,dx\,dy,\\
 &\label{eq:A2}A^2_n:=\frac{1}{\delta r_n}\int_{\Omega_h\setminus\Omega_{h+\delta\psi_{\delta}^n}}W_0(y,Eu(x,y)-E_0(y))\,dx\,dy,\\
 &\label{eq:B}B_n:=\frac{\gamma_f\beta}{\delta r_n}(\HH^1(\tilde{\Gamma}_{h+\delta\psi_{\delta}^n}\cap\{y=0\})-\HH^1(\tilde{\Gamma}_{h}\cap\{y=0\}))\\
 &\nonumber\quad+\frac{\gamma_f}{\delta r_n}(\HH^1(\tilde{\Gamma}_{h+\delta\psi_{\delta}^n}\cap\{y>0\})-\HH^1(\tilde{\Gamma}_{h}\cap\{y>0\})),\\
 &\label{eq:D}D_n:=\frac{\lambda_0}{\delta r_n}\left|\int_a^b h-(h+\delta\psi_{\delta}^n)\,dx\right|,
 \end{align}
 and
 \be{eq:E}E_n:=\frac{2\gamma_f}{\delta r_n}\sum_{x\in C(h)}((h+\delta\psi_{\delta}^n)^-(x)-(h+\delta\psi_{\delta}^n)(x))-\frac{2\gamma_f}{\delta r_n}\sum_{x\in C(h)}(h^-(x)-h(x)).\ee
 We begin by noticing that
\be{eq:Ezero}E_n=0\ee
 by the regularity of $\psi_{\delta}^n$, and that
 \be{eq:dn}
 D_n\leq\frac{\lambda_0}{r_n}\int_{r_n x_{\delta}^- +x_0}^{r_n x_{\delta}^+ +x_0} |\psi_{\delta}^n(x)|\,dx=\lambda_0 r_n\int_{x_{\delta}^-}^{x_{\delta}^+}|\psi_{\delta}(z_1)|\,dz_1\to 0
 \ee
 as $n\to +\infty$, by the change of variable 
 \be{variablechange}
 x=x_0+r_n z_1.
 \ee

 \noindent\textbf{Step 1} (Convergence of the elastic-energy terms){.}  We show that $A^1_n\to 0$. The proof that $A^2_n\to 0$ is analogous. We begin by assuming that the quantities
 $$\lambda_n:=\frac{1}{r_n}\left(\int_{C(z_0,r_n)}|\nabla u|^2\,dx\,dy\right)^{\tfrac12}$$
satisfy
\be{lamdaconditions}\limsup_{n\to +\infty} \lambda_n<+\infty.
\ee
In this situation we define the maps $v_n:C_n^k\to\R^2$, as
$$v_n(z):=\frac{u(z_0+r_nz)-\fint_{C^k(z_0,r_n)}u(x,y)\,dx\,dy}{r_n}.$$
Notice that by construction we have $\int_{C_n^k}v_n(x,y)\,dx\,dy=0$. Since $u\in H^1(\Omega_h\cup R(z_0,r_n)^k;\R^2)$, in each case ($c_k$), $k=1,2,3$, the map $v_n$ satisfies $v_n\in H^1(R^k;\R^2)$, $k=1,2,3$, and 
\be{eq:boundvn} \|v_n\|_{W^{1,2}(R^k;\R^2)}\leq C\|\nabla v_n\|_{L^2(C_n^k;\M^{2\times 2})}\leq C\lambda_n^2\leq C\ee
for $n$ big enough, where the last inequality follows from \eqref{lamdaconditions}.
Therefore for each case ($c_k$), $k=1,2,3$ we conclude that
\begin{align*}
A^1_n&=\frac{1}{\delta r_n}\int_{I_n^{\delta}}\int_{h(x)}^{h(x)+\delta\psi_{\delta}^n(x)}\C_f(Eu(x,y)-E_0(y)):(Eu(x,y)-E_0(y))\,dy\,dx\\
&=\frac{r_n}{\delta}\int_{I^{\delta}}\int_{\frac{h(x_0+r_n z_1)}{r_n}}^{\frac{h(x_0+r_n z_1)}{r_n}+\delta\psi_{\delta}(z_1)}\C_f(Ev_n(z)-E_0(z_2)):(Ev_n(z)-E_0(z_2))\,dz_2\,dz_1\\
&\leq \frac{C r_n}{\delta}\|E v_n-E_0\|_{L^2(R^k;\M^{2\times 2}_{\rm sym})}\leq \frac{C r_n}{\delta},
\end{align*}
with $I_n^{\delta}:=(x_0+r_nx_{\delta}^-, x_0+r_nx_{\delta}^+)\cap \{\psi_{\delta}^n\geq 0\}$, $I^{\delta}:=(x_{\delta}^-, x_{\delta}^+)\cap \{\psi_{\delta}\geq 0\}$, where in the second equality we performed the change of variable 
\be{eq:changevariable}(x,y)=(x_0+r_nz_1,r_nz_2),
\ee
and where the last inequality follows from \eqref{eq:boundvn}, and \eqref{P1add}--\eqref{supppsidelta}. 

When \eqref{lamdaconditions} does not hold, we have
$$\limsup_{n\to +\infty} \lambda_n=+\infty.$$
Then, in view of Proposition \ref{thm:blow} there holds
\be{eq:case2limit} r_n\lambda_n^2=\frac{1}{r_n}\int_{C(z_0,r_n)}|\nabla u|^2\,dx\,dy\leq Cr_n^{2\alpha-1}\to 0.\ee

We define the maps $w_n:C_n^k\to\R^2$, as
$$w_n(z):=\frac{u(z_0+r_nz)-\fint_{C^k(z_0,r_n)}u(x,y)\,dx\,dy}{\lambda_n r_n},$$
for every $z\in C_n^k$. Note that $\int_{C_n^k}w_n(x)\,dx=0$ by construction. 
Again, the fact that $u\in H^1(\Omega_h\cup R^k(z_0,r_n);\Rz^2)$ implies that $w_n\in H^1(R^k;\Rz^2)$, with 
\be{eq:boundun} \|w_n\|_{W^{1,2}(R^k;\R^2)}\leq C\|\nabla w_n\|_{L^2(C_n^k;\M^{2\times 2})}\leq C.\ee
By employing the same change of variable \eqref{eq:changevariable} of the first case we  observe that 
\begin{align*}
A^1_n&=\frac{1}{\delta r_n}\int_{I^{\delta}_n}\int_{h(x)}^{h(x)+\delta\psi_{\delta}^n(x)}\C_f(Eu(x,y)-E_0(y)):(Eu(x,y)-E_0(y))\,dy\,dx\\
&=\frac{\lambda_n^2 r_n}{\delta}\int_{I^{\delta}}\int_{\frac{h(x_0+r_n z_1)}{r_n}}^{\frac{h(x_0+r_n z_1)}{r_n}+\delta\psi_{\delta}(z_1)}\C_f(Ew_n(z){-}E_0(z_2)){:}(Ew_n(z){-}E_0(z_2))\,dz_2\,dz_1\\
&\leq \frac{C \lambda_n^2 r_n}{\delta}\|E w_n-E_0\|_{L^2(R^k;\M^{2\times 2}_{\rm sym})}\leq \frac{C}{\delta} \lambda_n^2 r_n
\end{align*}
where now in the last inequality we used \eqref{eq:boundun} and \eqref{P1add}--\eqref{supppsidelta}. The claim follows from \eqref{eq:case2limit}.

 \noindent\textbf{Step 2} (Surface-energy convergence under condition ($c_1$))\textbf{.} In this step we study the convergence of the terms $B_n$  under condition ($c_1$). To this aim, we treat in three different subsections the cases of island borders, of valleys with no vanishing contact angles, and of valleys with one vanishing contact angle. In particular, the first subsection yields Assertion 2. of the proposition, whereas Assertion 1. is proved in the second and third subsections.

  \subsection*{Island borders} In this subsection we prove Assertion 2. of the proposition, namely we consider $x_0=c$ for some $(c,d)\in I_h$, and we prove that $\theta^-(c)\leq\theta^*$ (see Figure \ref{YDStep2Case2Big}). The case of $x_0=d$ and $\theta^+(d)$ is analogous by symmetry. For simplicity we denote in the following $\theta^-(c)$ by $\theta$. 
 
 We begin by considering $\theta^*>0$. Note that
 \be{eq:tan-s}
\tan(\theta)=-g'_{-}(0)\quad\text{and }\tan(\theta^*)=\frac{\sqrt{1-\beta^2}}{\beta}. 
 \ee 
  
Assume by contradiction that \be{absurdhpborders} \theta> \theta^*.\ee Then by \eqref{eq:tan-s}, we have $0<\tan(\theta^*)<-g'_{-}(0)$. We define $\psi_{\delta}$ by
$$
\psi_{\delta}(s)=\begin{cases}-\left(\frac{g'_{-}(0)+\tan(\theta^*)}{\delta}\right)s + \frac{\tan(\theta^*)}{g'_{-}(0)}+1& \text{for } \frac{\delta}{g'_{-}(0)}<s\leq0,\\
\frac{-\tan(\theta^*)}{\delta}s +\frac{\tan(\theta^*)}{g'_{-}(0)}+1& \text{for }  0<s<\delta\left(\frac{1}{\tan(\theta^*)}+\frac{1}{g'_{-}(0)}\right),\\
0 & \text{otherwise.}
\end{cases}
$$
 \begin{figure}[htp]
\begin{center}
\includegraphics[scale=0.14]{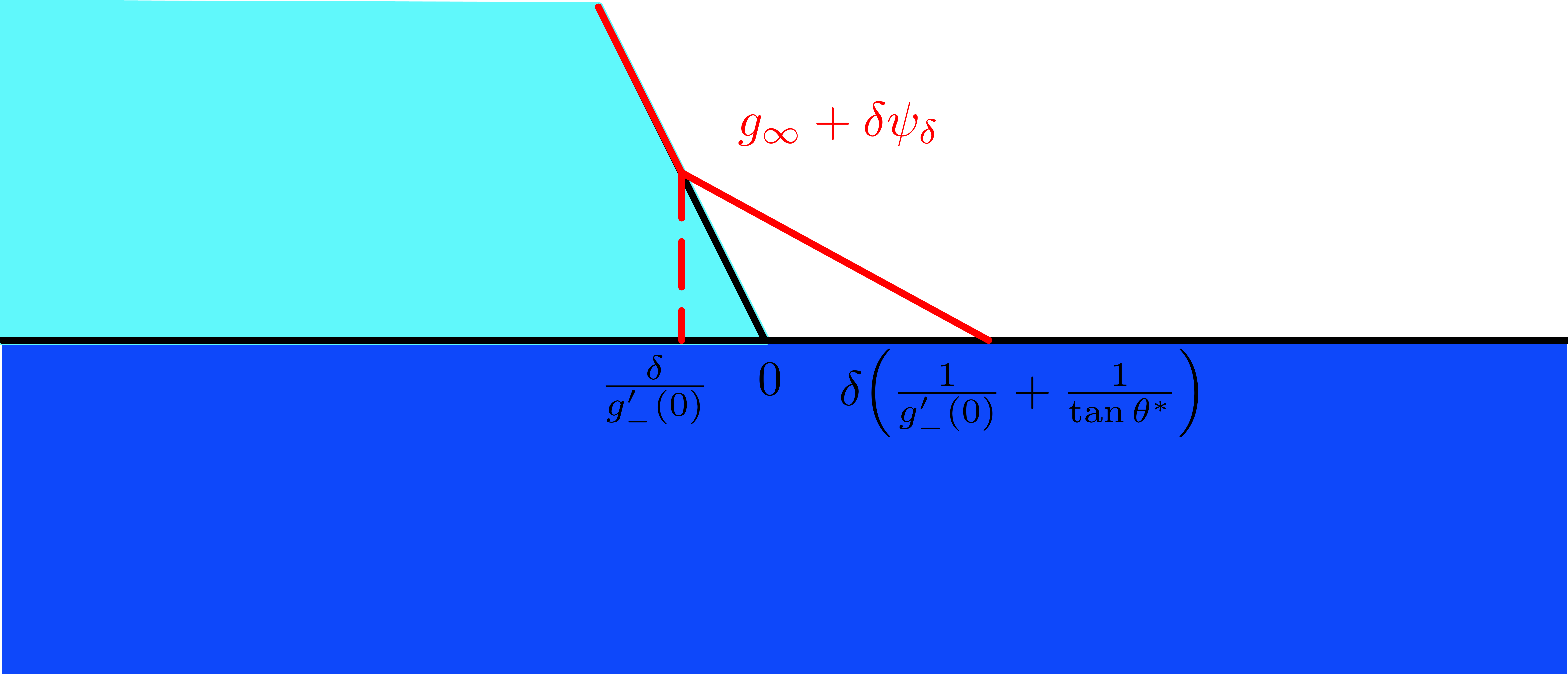}
\caption{The blow-up at island borders (and at valleys with one vanishing contact angle) is displayed. The profile of $g_{\infty}$ and the perturbation $g_{\infty}+\delta \psi_{\delta}$ in the case $\theta\geq\theta^*=\arccos{\beta}$ for island borders are highlighted in black and in red, respectively}
\label{YDStep2Case2Big}
\end{center}
\end{figure}

We observe that
$$\HH^1(\tilde{\Gamma}_{h+\delta\psi_{\delta}^n}\cap\{y=0\})-\HH^1(\tilde{\Gamma}_{h}\cap\{y=0\})=-\delta r_n\left(\frac{1}{\tan(\theta^*)}+\frac{1}{g'_{-}(0)}\right)$$
as shown in Figure \ref{YDStep2Case2Big}. We now observe that by condition ($c_1$) the map $h$ is Lipschitz in $(-a',a')$ and hence, its derivative $h'$ exists a.e. in $(-a',a')$, and $h'_-$ and $h'_+$ are, respectively, left and right continuous. These properties together with the definition of $\psi_{\delta}$ imply that  
\bas
&B_n=\frac{\gamma_f}{\delta r_n}\int_{\frac{\delta r_n}{g'_{-}(0)}+x_0}^{\delta r_n\left(\frac{1}{\tan(\theta^*)}+\frac{1}{g'_{-}(0)}\right)+x_0}\sqrt{1+\left(h'(x)+\delta\psi_{\delta}'\left(\frac{x-x_0}{r_n}\right)\right)^2}\,dx\\
&\quad-\frac{\gamma_f}{\delta r_n}\int_{\frac{\delta r_n}{g'_{-}(0)}+x_0}^{x_0}\sqrt{1+(h'(x))^2}\,dx
-\gamma_f\beta\left(\frac{1}{\tan(\theta^*)}+\frac{1}{g'_{-}(0)}\right)\\
&\quad=\frac{\gamma_f}{\delta}\int_{\frac{\delta}{g'_{-}(0)}}^{\delta\left(\frac{1}{\tan(\theta^*)}+\frac{1}{g'_{-}(0)}\right)}\sqrt{1+\left(h'(x_0+r_ns)+\delta\psi_{\delta}'(s)\right)^2}\,ds\\
&\quad-\frac{\gamma_f}{\delta}\int_{\frac{\delta}{g'_{-}(0)}}^{0}\sqrt{1+(h'(x_0+r_ns))^2}\,ds
-\gamma_f\beta\left(\frac{1}{\tan(\theta^*)}+\frac{1}{g'_{-}(0)}\right),
\end{align*}
 where in the last equality we used the change of variable \eqref{variablechange}.
 Furthermore, in view of the fact that  $h'_-(x_0+r_nz)\to g'_{-}(0)$ and $h'_+(x_0+r_nz)\to g'_{+}(0)$ as $n\to +\infty$,  the Lebesgue Dominated Convergence Theorem yields that
\begin{align}
\label{eq:lim-Bn}B_n\to -\gamma_f\beta\left(\frac{1}{\tan(\theta^*)}+\frac{1}{g'_{-}(0)}\right)+\frac{\gamma_f}{\beta\tan(\theta^*)}+\gamma_f\frac{\sqrt{1+(g'_{-}(0))^2}}{g'_{-}(0)}.
\end{align}
By \eqref{eq:el}, \eqref{eq:dn}, and Step 1, there holds
\be{eq:lim-Bn-2zero}-\beta\left(\frac{1}{\tan(\theta^*)}+\frac{1}{g'_{-}(0)}\right)+\frac{1}{\beta\tan(\theta^*)}+\frac{\sqrt{1+(g'_{-}(0))^2}}{g'_{-}(0)}\geq 0,\ee
which in turn implies
\be{eq:lim-Bn2prima}
\beta \tan(\theta^*)\sqrt{1+(\tan(\theta))^2}\leq (1-\beta^2)\tan(\theta)+\beta^2\tan(\theta^*).
\ee
Substituting \eqref{eq:tan-s} in \eqref{eq:lim-Bn2prima}, dividing by $\sqrt{1-\beta^2}$, and taking the squares of both sides of the resulting inequality, we obtain
\be{eq:lim-Bn2}\left(\beta \tan(\theta)-\sqrt{1-\beta^2}\right)^2\leq 0,
\ee
and hence, again by \eqref{eq:tan-s}, $\theta=\theta^*$ which is in contradiction with \eqref{absurdhpborders}.

Consider now the case in which $\theta^*=0$, i.e., $\beta=1$. Assume by contradiction that \be{absurdhpborderszero} \theta> \theta^*=0.\ee Then, for $\delta$ small enough, by \eqref{eq:tan-s}, we have $0=\tan(\theta^*)<\delta<-g'_{-}(0)$. We define $\psi_{\delta}$ by
$$
\psi_{\delta}(s)=\begin{cases}-\left(\frac{g'_{-}(0)+\ep_{\delta}}{\delta}\right)s + \frac{\ep_{\delta}}{g'_{-}(0)}+1& \text{for } \frac{\delta}{g'_{-}(0)}<s\leq0,\\
\frac{-\ep_{\delta}}{\delta}s +\frac{\ep_{\delta}}{g'_{-}(0)}+1& \text{for }  0<s<\delta\left(\frac{1}{\ep_{\delta}}+\frac{1}{g'_{-}(0)}\right),\\
0 & \text{otherwise},
\end{cases}
$$
where $\ep_{\delta}<<\delta$ is such that $\delta\left(\frac{1}{\ep_{\delta}}+\frac{1}{g'_{-}(0)}\right)<1$. 

The same computations as in the case $\theta^*>0$ yield
$$\HH^1(\tilde{\Gamma}_{h+\delta\psi_{\delta}^n}\cap\{y=0\})-\HH^1(\tilde{\Gamma}_{h}\cap\{y=0\})=-\delta r_n\left(\frac{1}{\ep_{\delta}}+\frac{1}{g'_{-}(0)}\right),$$
 and hence, since here $\beta=1$,
\bas
&B_n=\frac{\gamma_f}{\delta r_n}\int_{\frac{\delta r_n}{g'_{-}(0)}+x_0}^{\delta r_n\left(\frac{1}{\ep_{\delta}}+\frac{1}{g'_{-}(0)}\right)+x_0}\sqrt{1+\left(h'(x)+\delta\psi_{\delta}'\left(\frac{x-x_0}{r_n}\right)\right)^2}\,dx\\
&\quad-\frac{\gamma_f}{\delta r_n}\int_{\frac{\delta r_n}{g'_{-}(0)}+x_0}^{x_0}\sqrt{1+(h'(x))^2}\,dx
-\gamma_f\left(\frac{1}{\ep_{\delta}}+\frac{1}{g'_{-}(0)}\right)\\
&\quad=\frac{\gamma_f}{\delta}\int_{\frac{\delta}{g'_{-}(0)}}^{\delta\left(\frac{1}{\ep_{\delta}}+\frac{1}{g'_{-}(0)}\right)}\sqrt{1+\left(h'(x_0+r_ns)+\delta\psi_{\delta}'(s)\right)^2}\,ds\\
&\quad-\frac{\gamma_f}{\delta}\int_{\frac{\delta}{g'_{-}(0)}}^{0}\sqrt{1+(h'(x_0+r_ns))^2}\,ds
-\gamma_f\left(\frac{1}{\ep_{\delta}}+\frac{1}{g'_{-}(0)}\right),
\end{align*}
which in turn, by the Dominated Convergence Theorem, implies
\begin{align}
\label{eq:lim-Bnzero}B_n\to -\gamma_f\left(\frac{1}{\ep_{\delta}}+\frac{1}{g'_{-}(0)}\right)+\gamma_f\frac{\sqrt{1+\ep_{\delta}^2}}{\ep_{\delta}}+\gamma_f\frac{\sqrt{1+(g'_{-}(0))^2}}{g'_{-}(0)}.
\end{align}
Since the function $x\to \frac{\sqrt{1+x^2}}{x}-\frac{1}{x}$ is strictly increasing in $(-\infty,0)$, inequality \eqref{eq:lim-Bnzero} gives
$$0>g'_{-}(0)\geq -\ep_{\delta}.$$
By the arbitrary smallness of $\ep_{\delta}$ we conclude that $g'_{-}(0)=0$. This contradicts \eqref{absurdhpborderszero}, and completes the proof of Assertion 2. of the proposition.
 
\subsection*{Valleys with no vanishing contact angles} In this subsection we begin the proof of Assertion 1. of the proposition, namely we consider a point $x_0\in P_h$ and we prove that at least one between $g'_{-}(0)$ and $g'_{+}(0)$ is zero.
Notice that since the profile of the film is a graph we have 
 $$ g'_{-}(0)\leq0\leq g'_{+}(0).$$ 
 Assume by contradiction that
 \be{eq:differenttangents}
 g'_{-}(0)<0<g'_{+}(0),
\ee
 and define $\psi_{\delta}$ by
 $$\psi_{\delta}(s):=\begin{cases}0&\text{for }s<\frac{\delta}{g'_{-}(0)}\text{ and }s>\frac{\delta}{g'_{+}(0)}.\\
 \frac{\delta-g_{\infty}(s)}{\delta}&\text{for }s\in \left[\frac{\delta}{g'_{-}(0)},\frac{\delta}{g'_{+}(0)}\right]\end{cases}$$
 for every $s\in(-1,1)$ (see Figure \ref{YDStep2Case1}). 
 Since $\psi_{\delta}\geq0$, by \eqref{eq:B} we obtain 
 $$B_n=\frac{\gamma_f}{\delta r_n}(\HH(\tilde{\Gamma}_{h+\delta\psi_{\delta}^n}\cap\{y>0\})-\HH(\tilde{\Gamma}_{h}\cap\{y>0\})).$$
  \begin{figure}[htp]
\begin{center}
\includegraphics[scale=0.14]{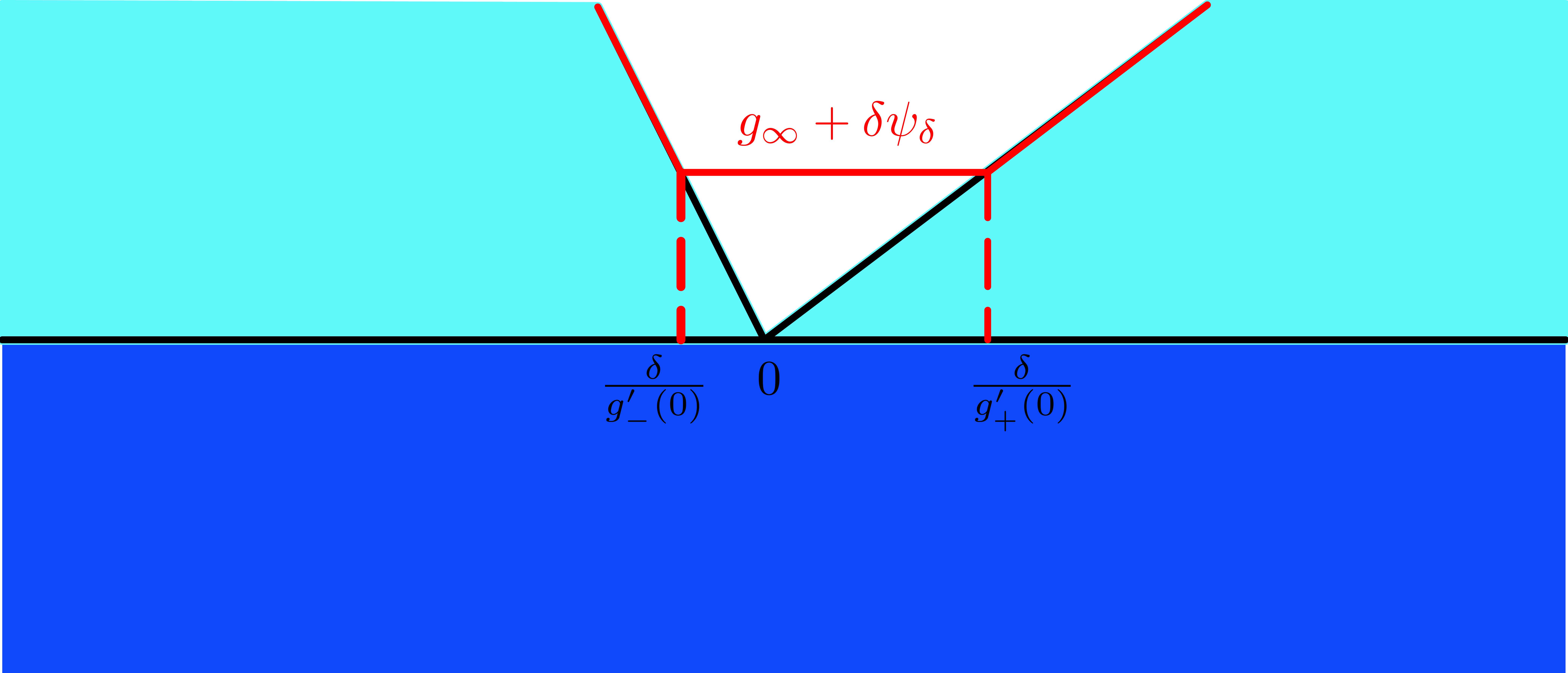}
\caption{The blow-up at a valley with no vanishing contact angle is displayed. The profile of $g_{\infty}$ and the perturbation $g_{\infty}+\delta \psi_{\delta}$ are highlighted in black and in red, respectively}
\label{YDStep2Case1}
\end{center}
\end{figure}
As before, since $h$ is Lipschitz in $(-a',a')$, the definition of $\psi_{\delta}$ implies that
 \bas
 &B_n=\frac{\gamma_f}{\delta r_n}\int_{\frac{\delta r_n}{g'_{-}(0)}+x_0}^{^{\frac{\delta r_n}{g'_{+}(0)}+x_0}}\left(\sqrt{1+\left(h'(x)+\delta\psi_{\delta}'\left(\frac{x-x_0}{r_n}\right)\right)^2}-\sqrt{1+(h'(x))^2}\right)\,dx\\
 &\quad=\frac{\gamma_f}{\delta r_n}\int_{\frac{\delta r_n}{g'_{-}(0)}+x_0}^{x_0}\left(\sqrt{1+\left(h'(x)-g'_{-}(0)\right)^2}-\sqrt{1+(h'(x))^2}\right)\,dx\\
 &\quad+\frac{\gamma_f}{\delta r_n}\int_{x_0}^{\frac{\delta r_n}{g'_{+}(0)}+x_0}\left(\sqrt{1+\left(h'(x)-g'_{+}(0)\right)^2}-\sqrt{1+(h'(x))^2}\right)\,dx\\
 &\quad=\frac{\gamma_f}{\delta}\int_{\frac{\delta}{g'_{-}(0)}}^{0}\left(\sqrt{1+\left(h'_-(x_0+r_nz)-g'_{-}(0)\right)^2}-\sqrt{1+(h'_-(x_0+r_n z))^2}\right)\,dz\\
 &\quad+\frac{\gamma_f}{\delta}\int_{0}^{\frac{\delta}{g'_{+}(0)}}\left(\sqrt{1+\left(h'_+(x_0+r_n z)-g'_{+}(0)\right)^2}-\sqrt{1+(h'_+(x_0+r_n z))^2}\right)\,dx
 \end{align*}
 where in the last equality we used the change of variable \eqref{variablechange}.
 Furthermore, in view of the fact that  $h'_-(x_0+r_nz)\to g'_{-}(0)$ and $h'_+(x_0+r_nz)\to g'_{+}(0)$ as $n\to +\infty$,  the Lebesgue Dominated Convergence Theorem yields that
  $$B_n\to\gamma_f\left(\frac{g'_{-}(0)}{1+\sqrt{1+(g'_{-}(0))^2}}-\frac{g'_{+}(0)}{1+\sqrt{1+(g'_{+}(0))^2}}\right).$$
as $n\to +\infty$.
 By \eqref{eq:el}, \eqref{eq:Ezero}, \eqref{eq:dn},  and Step 1, there holds
 \be{eq:el-lim1}\frac{g'_{-}(0)}{1+\sqrt{1+(g'_{-}(0))^2}}\geq \frac{g'_{+}(0)}{1+\sqrt{1+(g'_{+}(0))^2}}.\ee
 We observe that, setting $f(x):=\frac{x}{1+\sqrt{1+x^2}}$ for every $x\in\R$, there holds $f'(x)>0$ for every $x\in\R$. Thus \eqref{eq:el-lim1} yields that $g'_{-}(0)\geq g'_{+}(0)$ which is in contradiction with \eqref{eq:differenttangents}.

\subsection*{Valleys with one vanishing contact angle}
In this subsection we conclude the proof of Assertion 1. of the proposition. From the previous subsection it remains to prove that
if $x_0\in P_h$ is such that $\theta^+(x_0)=0$, then $\theta^-(x_0)\leq \theta^*$ (see Figure \ref{YDStep2Case2Big}). In the symmetric case, in which $x_0\in P_h$ is such that $\theta^-(x_0)=0$, analogous arguments imply that $\theta^+(x_0)\leq \theta^*$. 

Let $x_0\in P_h$ with $\theta^+(x_0)=0$. We first consider the case $\theta^*:=\arccos(\beta)>0$. Assume by contradiction that  
\be{eq:contrad-valley-one}
\theta:=\theta^-(x_0)>\theta^*.
\ee  We define $\psi_{\delta}$ as in the case of island borders, by
$$
\psi_{\delta}(s)=\begin{cases}-\left(\frac{g'_{-}(0)+\tan(\theta^*)}{\delta}\right)s + \frac{\tan(\theta^*)}{g'_{-}(0)}+1& \text{for } \frac{\delta}{g'_{-}(0)}<s\leq0,\\
\frac{-\tan(\theta^*)}{\delta}s +\frac{\tan(\theta^*)}{g'_{-}(0)}+1& \text{for }  0<s<\delta\left(\frac{1}{\tan(\theta^*)}+\frac{1}{g'_{-}(0)}\right),\\
0 & \text{otherwise.}
\end{cases}
$$
Differently from the case of island borders, we have
$$\HH^1(\tilde{\Gamma}_{h+\delta\psi_{\delta}^n}\cap\{y=0\})-\HH^1(\tilde{\Gamma}_{h}\cap\{y=0\})=0,$$
and
\bas
&B_n=\frac{\gamma_f}{\delta r_n}\int_{\frac{\delta r_n}{g'_{-}(0)}+x_0}^{\delta r_n\left(\frac{1}{\tan(\theta^*)}+\frac{1}{g'_{-}(0)}\right)+x_0}\sqrt{1+\left(h'(x)+\delta\psi_{\delta}'\left(\frac{x-x_0}{r_n}\right)\right)^2}\,dx\\
&\quad-\frac{\gamma_f}{\delta r_n}\int_{\frac{\delta r_n}{g'_{-}(0)}+x_0}^{x_0}\sqrt{1+(h'(x))^2}\,dx
-\gamma_f\left(\frac{1}{\tan(\theta^*)}+\frac{1}{g'_{-}(0)}\right)\\
&\quad=\frac{\gamma_f}{\delta}\int_{\frac{\delta}{g'_{-}(0)}}^{\delta\left(\frac{1}{\tan(\theta^*)}+\frac{1}{g'_{-}(0)}\right)}\sqrt{1+\left(h'(x_0+r_ns)+\delta\psi_{\delta}'(s)\right)^2}\,ds\\
&\quad-\frac{\gamma_f}{\delta}\int_{\frac{\delta}{g'_{-}(0)}}^{0}\sqrt{1+(h'(x_0+r_ns))^2}\,ds
-\gamma_f\left(\frac{1}{\tan(\theta^*)}+\frac{1}{g'_{-}(0)}\right).
\end{align*}
Arguing as in Step 2 in the case of island borders, by the Dominated Convergence Theorem, we obtain
$$B_n\to -\gamma_f\left(\frac{1}{\tan(\theta^*)}+\frac{1}{g'_{-}(0)}\right)+\frac{\gamma_f}{\beta\tan(\theta^*)}+\gamma_f\frac{\sqrt{1+(g'_{-}(0))^2}}{g'_{-}(0)}\geq 0.$$
Since $\beta\leq 1$, the previous inequality implies \eqref{eq:lim-Bn-2zero}, which in turn, arguing as in the case of island borders, yields $\theta=\theta^*$. This contradicts \eqref{eq:contrad-valley-one}.

Consider now the case in which $\theta^*=0$, and assume by contradiction that  
\be{eq:contrad-valley-onezero}
\theta:=\theta^-(x_0)>\theta^*,
\ee  
namely $\beta=1$.
Then, for $\delta$ small enough, by \eqref{eq:tan-s}, we have $0=\tan(\theta^*)<\delta<-g'_{-}(0)$.
We define $\psi_{\delta}$ as in the case of island borders, by
$$
\psi_{\delta}(s)=\begin{cases}-\left(\frac{g'_{-}(0)+\ep_{\delta}}{\delta}\right)s + \frac{\ep_{\delta}}{g'_{-}(0)}+1& \text{for } \frac{\delta}{g'_{-}(0)}<s\leq0,\\
\frac{-\ep_{\delta}}{\delta}s +\frac{\ep_{\delta}}{g'_{-}(0)}+1& \text{for }  0<s<\delta\left(\frac{1}{\ep_{\delta}}+\frac{1}{g'_{-}(0)}\right),\\
0 & \text{otherwise.}
\end{cases}
$$
where $\ep_{\delta}<<\delta$ is such that $\delta\left(\frac{1}{\ep_{\delta}}+\frac{1}{g'_{-}(0)}\right)<1$. Analogous computations to the case $\theta^*>0$, as well as the fact that $\beta=1$, yield the inequality
\be{eq:eq:contrad-valley-onezero}-\gamma_f\left(\frac{1}{\tan(\theta^*)}+\frac{1}{g'_{-}(0)}\right)+\gamma_f\frac{\sqrt{1+\ep_{\delta}^2}}{\ep_{\delta}}+\gamma_f\frac{\sqrt{1+(g'_{-}(0))^2}}{g'_{-}(0)}\geq 0,
\ee
which is the same relation that we obtained in \eqref{eq:lim-Bnzero}. As in Step 2, in the case of island borders with $\theta^*=0$, we deduce that $0>g'_{-}(0)\geq -\ep_{\delta},$
and, by the arbitrary smallness of $\ep_{\delta}$, that $g'_{-}(0)=0$. This contradicts \eqref{eq:contrad-valley-onezero} and completes the proof of Assertion 1.

  \noindent\textbf{Step 3} (Surface-energy convergence under conditions ($c_2$) or ($c_3$))\textbf{.} 
  We point out that conditions ($c_2$) or ($c_3$) correspond to $z_0$ being a lower-endpoint of a connected component of $\Gamma_h^{jump}$. In this step we prove Assertion 3. of the proposition, namely we show that conditions ($c_2$) and ($c_3$) are never satisfied except when $\beta=0$.  As in the previous step we distinguish the case of island borders, of valleys with no vanishing contact angles, and of valleys with one vanishing contact angle. We only consider condition ($c_3$). The same arguments work under condition ($c_2$). 
  
 \subsection*{Jumps: Island borders} Here we prove that if $\theta^*\neq \frac{\pi}{2}$ then there are no jumps at island borders. Assume by contradiction that there exists $x_0=c$ for some $(c,d)\in I_h$ such that $\theta^-(c)= \pi/2$, and that $\theta^*\neq \frac{\pi}{2}$. 
 
 We first consider the case in which $\theta^*>0$.  
We choose $\psi_{\delta}$ such that
$$\psi_{\delta}(s):=\begin{cases}-\frac{\tan(\theta^*)}{\delta}s&\text{for }0<s<\frac{\delta}{\tan(\theta^*)},\\
0&\text{otherwise }.\end{cases}$$
We observe that
$$\HH(\tilde{\Gamma}_{h+\delta\psi_{\delta}^n}\cap\{y=0\})-\HH(\tilde{\Gamma}_{h}\cap\{y=0\})=-\frac{\delta r_n}{\tan(\theta^*)}.$$
Therefore
\bas
&B_n=\frac{\gamma_f}{\delta r_n}\int_{x_0}^{\frac{\delta r_n}{\tan(\theta^*)}+x_0}\sqrt{1+\left(h'(x)+\delta\psi_{\delta}'\left(\frac{x-x_0}{r_n}\right)\right)^2}\,dx-\frac{\gamma_f\beta}{\tan(\theta^*)}-\gamma_f\\
&\quad=\frac{\gamma_f}{\delta}\int_{0}^{\frac{\delta}{\tan(\theta^*)}}\sqrt{1+\left(h'(x_0+r_ns)+\delta\psi_{\delta}'(s)\right)^2}\,ds-\frac{\gamma_f\beta}{\tan(\theta^*)}-\gamma_f.
\end{align*}
By the Dominated Convergence Theorem, we obtain
\begin{align*}
B_n\to -\frac{\gamma_f\beta}{\tan(\theta^*)}+\frac{\gamma_f}{\beta\tan(\theta^*)}-\gamma_f.
\end{align*}
By \eqref{eq:el}, \eqref{eq:dn}, and Step 1, there holds
$$-1-\frac{\beta}{\tan(\theta^*)}+\frac{1}{\beta\tan(\theta^*)}\geq 0.$$
Thus, in view of \eqref{eq:tan-s} we conclude that
\be{eq:final-jump-ib}\sin(\theta^*)=\sqrt{1-\beta^2}\geq 1,\ee
namely, a contradiction. 

Consider now the case in which $\theta^*=0$, and choose 
$$\psi_{\delta}(s):=-s$$
for every $s\in (0,1)$. Then,
$$\HH(\tilde{\Gamma}_{h+\delta\psi_{\delta}^n}\cap\{y=0\})-\HH(\tilde{\Gamma}_{h}\cap\{y=0\})=-r_n,$$
and, since $\beta=1$,
\bas
&B_n=\frac{\gamma_f}{\delta r_n}\int_{x_0}^{r_n+x_0}\sqrt{1+\left(h'(x)+\delta\psi_{\delta}'\left(\frac{x-x_0}{r_n}\right)\right)^2}\,dx-\frac{\gamma_f}{\delta}-\gamma_f\\
&\quad=\frac{\gamma_f}{\delta}\int_{0}^{1}\sqrt{1+\left(h'(x_0+r_ns)+\delta\psi_{\delta}'(s)\right)^2}\,ds-\frac{\gamma_f}{\delta}-\gamma_f.
\end{align*}
By the Dominated Convergence Theorem, we have
\begin{align*}
B_n\to -\frac{\gamma_f}{\delta}+\gamma_f\frac{\sqrt{1+\delta^2}}{\delta}-\gamma_f.
\end{align*}
Thus, properties \eqref{eq:el}, \eqref{eq:dn}, and Step 1 imply that
$$-1-\frac{1}{\delta}+\frac{\sqrt{1+\delta^2}}{\delta}\geq 0.$$
Since $-1-\frac{1}{\delta}+\frac{\sqrt{1+\delta^2}}{\delta}< 0$ for every $\delta>0$, we reach also in this case a contradiction. 
 
\subsection*{Jumps: Valleys with no vanishing contact angles} In this subsection we prove that for every $\theta^*$ there are no jumps at valleys with no vanishing contact angles. Consider $x_0\in P_h$ and such that $\theta^-(x_0)=\frac{\pi}{2}$. We want to prove that $g'_{+}(0)=0$. Assume by contradiction that $g'_{+}(0)>0$. Let  
 $$\psi_{\delta}(s):=\begin{cases}0&\text{for }s>\frac{\delta}{g'_{+}(0)}.\\
 \frac{\delta-g_{\infty}(s)}{\delta}&\text{for }s\in \left(0,\frac{\delta}{g'_{+}(0)}\right],\end{cases}$$
 for every $s\in(0,1)$. By the definition of $\psi_{\delta}$ there holds
  $$B_n=\frac{\gamma_f}{\delta r_n}(\HH(\tilde{\Gamma}_{h+\delta\psi_{\delta}^n}\cap\{y>0\})-\HH(\tilde{\Gamma}_{h}\cap\{y>0\})).$$
 In particular, we obtain
 \bas
 &B_n=\frac{\gamma_f}{\delta r_n}\int_{x_0}^{^{\frac{\delta r_n}{g'_{+}(0)}+x_0}}\left(\sqrt{1+\left(h'(x)+\delta\psi_{\delta}'\left(\frac{x-x_0}{r_n}\right)\right)^2}-\sqrt{1+(h'(x))^2}\right)\,dx-\gamma_f\\
 &\quad=\frac{\gamma_f}{\delta r_n}\int_{x_0}^{\frac{\delta r_n}{g'_{+}(0)}+x_0}\left(\sqrt{1+\left(h'(x)-g'_{+}(0)\right)^2}-\sqrt{1+(h'(x))^2}\right)\,dx-\gamma_f\\
 &\quad=\frac{\gamma_f}{\delta}\int_{0}^{\frac{\delta}{g'_{+}(0)}}\left(\sqrt{1+\left(h'(x_0+r_n z)-g'_{+}(0)\right)^2}-\sqrt{1+(h'(x_0+r_n z))^2}\right)\,dx-\gamma_f.
 \end{align*}
By applying the Dominated Convergence Theorem we conclude that
  $$B_n\to-\gamma_f\frac{g'_{+}(0)}{1+\sqrt{1+(g'_{+}(0))^2}} -\gamma_f$$
as $n\to +\infty$. Therefore, properties \eqref{eq:el}, \eqref{eq:dn}, and Step 1 yield 
$$g'_{+}(0)\leq -1-\sqrt{1+(g'_{+}(0))^2},$$
which contradicts the non negativity of $g'_{+}(0)$.

\subsection*{Jumps: Valleys with one vanishing contact angle}
Here we prove that if $\theta^*\neq \frac{\pi}{2}$ then there are no jumps at valleys with one vanishing contact angle (and hence, by the previous subsection, at every valley). 
Assume by contradiction that $\theta^*\neq \frac{\pi}{2}$, and that there exists $x_0\in P_h$ with $\theta^-(x_0)=\frac{\pi}{2}$ and $g'_{+}(0)=0$. 

In the situation in which $\theta^*>0$ we argue choosing $\psi_{\delta}$ as in the corresponding situation in Step 3, in the case of island borders. The same computations as in that subsection yield
$$B_n\to -\frac{\gamma_f}{\tan(\theta^*)}+\frac{\gamma_f}{\beta\tan(\theta^*)}-\gamma_f\geq 0.$$
Since $\beta<1$, this implies
$$-\frac{\gamma_f\beta}{\tan(\theta^*)}+\frac{\gamma_f}{\beta\tan(\theta^*)}-\gamma_f\geq 0,$$
which in turn yields to \eqref{eq:final-jump-ib} and to a contradiction.

The situation in which $\theta^*=0$ can be dealt with exactly in the same way as in the corresponding setting in Step 3 for island borders.


\end{proof}

We are now ready to prove Theorem \ref{thm:YDlaw}.

\begin{proof}[Proof of Theorem \ref{thm:YDlaw}]
We observe that Assertion 3. of Theorem  \ref{thm:YDlaw} coincides with Assertion 3. of Proposition \ref{pro:YDlawold}. In the wetting regime $\beta=1$ also Assertion 1. of Theorem  \ref{thm:YDlaw} follows directly from Assertions 1. and 2. of Proposition  \ref{pro:YDlawold}. 
Furthermore, in the dewetting regime $\beta<1$ from Proposition \ref{pro:YDlawold} for any $(c,d)\in I_h$  and $p\in P_h$ the angles $\theta^{-}(p)$, $\theta^{+}(p)$, $\theta^{-}(c)$, and $\theta^+(d)$ are smaller or equal to $\theta^*$ (and at least one between  $\theta^{-}(p)$ and $\theta^{+}(p)$ is zero). 
It remains therefore to assume that $\beta<1$, and in turn 
\be{thetastarpos}\theta^*>0,
\ee
 and to show that for any $(c,d)\in I_h$ the angles $\theta^{-}(c)$ and $\theta^+(d)$ are not strictly smaller than $\theta^*$, and that $P_h=\emptyset$. 
 
 To this aim we observe that it is enough to show the following claim: for every $z_0=(x_0,h(x_0))\in Z_h$ which is a valley or an island border, there holds 
 $$\theta^{-}(x_0)\geq\theta^*.$$
 In fact, we already know that in the dewetting regime any $p\in P_h$ has at least a zero contact angle from Proposition \ref{pro:YDlawold}.
 
To show the claim, we argue by contradiction and we assume that there exists a point  $z_0=(x_0,h(x_0))\in Z_h$ such that 
\be{contradictionhp}\theta^{-}(x_0)<\theta^*.
\ee
 The case with $\theta^{+}(x_0)<\theta^*$ follows by symmetry.
We start by defining a competitor profile function $h_{\ep}\in AP(a,b)$ by
\be{hep}
h_{\ep}(x):=
\begin{cases}
h(x) &\textrm{if $x\not\in[x_0-\ep,x_0]$,}\\
-\tan(\theta^*)(x-x_0+\ep)+h(x_0-\ep) &\textrm{if $x\in[x_0-\ep,x_0-\ep+\ell_{\ep}]$,}\\
0     &\textrm{if $x\in[x_0-\ep+\ell_{\ep},x_0]$,}
\end{cases}
\ee
for every $x\in(a,b)$ and $\ep>0$ small enough, where the quantity
\be{elleep}
\ell_{\ep}:=\frac{h(x_0-\ep)}{\tan(\theta^*)}
\ee 
is well defined owing to \eqref{thetastarpos}.
We observe that $h\geq h_{\ep}$ and that
\be{eq:YD0}
|\Omega_{h}|-|\Omega_{h_{\ep}}|\leq \int_{x_0-\ep}^{x_0} h(x) dx= \ep \int_{-1}^{0} h(x_0-\ep y) dy
\ee
by the change of variable $x=x_0+ \ep y$. Furthermore, we notice that the integral on the right-hand side of  \eqref{eq:YD0} converges to zero by the Lebesgue Dominated Convergence Theorem because $h$ is null and continuous at $x_0$. Therefore, $(u, h_{\ep})\in X$ is admissible for the penalized minimum problem \eqref{eq:penprob} for every $\ep>0$ small enough. 

From the minimality of $(u,h)$ and Proposition \ref{prop:penalization}  it follows that
\begin{align}
\label{eq:YDa}
\nonumber \mathcal{F}(u,h)&\leq \mathcal{F}(u,h_\ep) +\lambda_0 ||\Omega_{h}|-|\Omega_{h_{\ep}}||\\
&\leq \mathcal{F}(u,h_\ep) + \lambda_0 \ep \int_{-1}^{0} h(x_0-\ep y) dy,
\end{align}
where in the last inequality we again used \eqref{eq:YD0}.
By \eqref{filmenergyfinal}, \eqref{eq:beta}, \eqref{hep}, and \eqref{elleep} we obtain
\begin{align}
\label{eq:YDb}
\nonumber \mathcal{F}(u,h_\ep)&=\int_{\Omega_{h_{\ep}}}W_0(y, Eu(x,y)-E_0(y))\,dx\,dy+\int_{\tilde{\Gamma}_{h_{\ep}}}\varphi(y)\,d\HH^1\\
&\nonumber\quad+2\gamma_f\HH^1(\Gamma_{h_{\ep}}^{cut})+\gamma_{fs}(b-a)\\
&\nonumber\leq \int_{\Omega_{h}}W_0(y, Eu(x,y)-E_0(y))\,dx\,dy + \int_{\tilde{\Gamma}_{h}}\varphi(y)\,d\HH^1\\ 
&\nonumber\quad-\gamma_f  \int_{x_0-\ep}^{x_0} \sqrt{1+(h'(x))^2} dx +\gamma_f \sqrt{h^2(x_0-\ep)+\ell^2_{\ep}}+\beta\gamma_f (\ep-\ell_{\ep})\\
&\nonumber\quad+2\gamma_f\HH^1(\Gamma_{h}^{cut})+\gamma_{fs}(b-a)\\
&\nonumber= \mathcal{F}(u,h) -\gamma_f  \ep \int_{-1}^{0} \sqrt{1+(h'_-(x_0+\ep y))^2} dy\\
&\quad +\gamma_f \sqrt{h^2(x_0-\ep)+\ell^2_{\ep}}+\beta\gamma_f (\ep-\ell_{\ep}).
\end{align}
Inequalities \eqref{eq:YDa} and \eqref{eq:YDb} yield
\begin{align}
\nn 0\leq&\frac{\lambda_0}{\gamma_f}  \int_{-1}^{0} h(x_0-\ep y) dy-\int_{-1}^{0} \sqrt{1+(h'_-(x_0+\ep y))^2} dy\\
&\qquad\qquad\qquad\qquad\qquad+\frac{h(x_0-\ep)}{\ep}\frac{\sqrt{1+\tan^2\theta^*}}{\tan\theta^*}+\beta \left(1- \frac{\ell_{\ep}}{\ep}\right).\label{eq:YDc}
\end{align}
By applying again the Lebesgue Dominated Convergence Theorem together with the observation that both $h$ and $h'_-$ are left continuous at $x_0$, $h(x_0)=0$, and $h_-'(x_0)=-\tan\left(\theta^-(x_0)\right)$, we obtain that 
\begin{align}
\nn0\leq&- \sqrt{1+\tan^2\left(\theta^-(x_0)\right)}+\tan\left(\theta^-(x_0)\right)\frac{\sqrt{1+\tan^2\theta^*}}{\tan\theta^*}\\
&\qquad\qquad\qquad\qquad\qquad\qquad\qquad\qquad+\beta \left(1- \frac{\tan\left(\theta^-(x_0)\right)}{\tan\theta^*}\right).\label{eq:YDd}
\end{align}
 If $\tan\left(\theta^-(x_0)\right)=0$, inequality \eqref{eq:YDd} implies that $\beta\geq 1$, which contradicts the fact that $\beta<1$.
 
 Assume now that $\tan\left(\theta^-(x_0)\right)\neq 0$. By dividing \eqref{eq:YDd} by $\tan\left(\theta^-(x_0)\right)$, we have 
$$
0\leq- \frac{\sqrt{1+\tan^2\left(\theta^-(x_0)\right)}}{\tan\left(\theta^-(x_0)\right)}+\frac{\sqrt{1+\tan^2\theta^*}}{\tan\theta^*}+\beta \left(\frac{1}{\tan\left(\theta^-(x_0)\right)}- \frac{1}{\tan\theta^*}\right)
$$
from which we conclude that
\be{eq:YDde}
(\beta \tan\left(\theta^-(x_0)\right)-\sqrt{1-\beta^2})^2\leq 0,
\ee
in the same way as done for passing from \eqref{eq:lim-Bn} to \eqref{eq:lim-Bn2}. From \eqref{eq:YDde} it follows that
$$
\tan\left(\theta^-(x_0)\right)=\frac{\sqrt{1-\beta^2}}{\beta}=\tan\theta^*.
$$
This contradicts \eqref{contradictionhp}, and therefore the claim and the theorem follow.

\end{proof}

\section{Regularity of local minimizers}\label{sec:analiticity}

In this section we prove Theorem \ref{thm:regularity} by improving the regularity results already contained in Section \ref{sec:regularity}.  In particular the results follow from Proposition \ref{prop:loc-lip}, the decay estimate of Proposition \ref{thm:blow}, from implementing some arguments used for Proposition \ref{pro:YDlawold}, and from proving a second decay estimate which   is independent from the specific point on the graph $\Gamma_h\setminus(\Gamma_h^{cut}\cup\Gamma_h^{cusp})$ (see \eqref{eq:blow2}).

\begin{proof}[Proof of Theorem \ref{thm:regularity}]
We begin by observing that Assertions 1. and 2. are direct consequences of Proposition \ref{prop:loc-lip}. In fact, 
as pointed out in \cite[Remark 3.6]{FFLM2}, the only situation in which case (ii) of Proposition \ref{prop:loc-lip} arises is when $z_0$ is either a cusp point or the lower-end point of a vertical cut. 
Then, by combining Proposition \ref{prop:loc-lip} with a compactness argument it follows that the set $\Gamma_h^{cusp}\cup\{(x,h(x)):\,x\in C(h)\}$ where $C(h)$ is the set defined in \eqref{eq:def-S} has finite cardinality.


To obtain Assertion 3. we note that, by employing a similar argument to the one of Step 2 (valleys with one vanishing contact angle) of the Proof of Proposition \ref{pro:YDlawold} in the case of valleys and for the situation of $z_0=(x_0,h(x_0))\in \Gamma_h\setminus(\Gamma_h^{cut}\cup\Gamma_h^{cusp})$ with  $h(x_0)>0$ (by using Step 1 of Proposition \ref{pro:YDlawold} for $\mathbb{C}_f=\mathbb{C}_s$) we also prove that  $\Gamma_h^{reg}\cap\{y>0\}$ is $C^1$ and hence, $\Gamma_h^{reg}\setminus Y_h\in C^1$. In view of this regularity we can implement the argument used in \cite{FFLM2}, which is based on the following decay estimate: For every parameter $0<\sigma<1$ there exist a constant $C>0$ and a radius $r_0$ such that
\begin{equation}\label{eq:blow2}
\int_{B(z_0,r)\cup\Omega_h}|\nabla u|^2\,dx\,dy\leq Cr^{2\sigma},
\end{equation}
for  all $z_0\in\Gamma_h\setminus(\Gamma_h^{cut}\cup\Gamma_h^{cusp})$ and $0<r<r_0$. 
In view of \eqref{eq:blow2} it is possible to prove as in \cite[Theorem 3.17]{FFLM2}  that 
\be{hdeacay}\mathcal{H}^1(\Gamma_h\cap B(z_0,r))\leq C r^{2\sigma_0}\ee
for $\sigma_0\in(1/2,1)$ and $r$ small enough. We note that \eqref{hdeacay} follows by a perturbation argument which we can reproduce also in the dewetting regime. In fact by Theorem \ref{thm:YDlaw} the set $Z_h\setminus Y_h$ does not include island borders, and so the profile $h$ is only perturbed in  $\{y>0\}$. The conclusion then follows from \eqref{hdeacay} by arguing as  in the proof of Theorem 6.1 of  \cite{Bonnet} (see \cite[Proposition 6.4]{Bonnet}). 

Assertion 4. follows as in \cite[Theorem 3.19]{FFLM2} by taking special care for the case $\C_f\not=\C_s$. In this case infact, when showing that $h$ is a classical solution of the Euler Lagrange equation \eqref{eq:el-analyt-h} in $\Gamma_h^{reg}\setminus Z_h$ it is not possible to extend the argument to $\Gamma_h^{reg}\setminus Y_h$ arguing by approximation. This difficulty is due to the presence of the transmission problem. 
\end{proof}

\section*{Acknowledgements}

The authors thank the Center for Nonlinear Analysis (NSF Grant No. DMS-0635983) and the Erwin Schr\"odinger Institute (Thematic Program: Nonlinear Flows), where part of this research was carried out. P. Piovano acknowledges support from the Austrian Science Fund (FWF) project P~29681 and the fact that this work has been funded by the Vienna Science and Technology Fund (WWTF), the City of Vienna, and Berndorf Privatstiftung through Project MA16-005. E. Davoli acknowledges the support of the Austrian Science Fund (FWF) project P~27052 and of the SFB project F65 ``Taming complexity in partial differential systems". Both authors are thankful to Serge Nicaise, and Anna-Margaret S\"andig for useful comments on the topic of transmission problems.

\bibliographystyle{plain}
\bibliography{ed}

\begin{thebibliography}{99}



	
\bibitem{Baer}	
	\textsc{Baer E.}, \emph{Minimizers of anisotropic surface tensions under gravity: higher dimensions via symmetrization}. Arch. Ration. Mech. Anal. \textbf{215} (2015), 531--578. 

          
          
          
 \bibitem{BK}         
        \textsc{Bellettini G.},  \textsc{Kholmatov S.}, \emph{Minimizing movements for mean curvature flow of droplets with prescribed contact angle}. arXiv:1612.04175 [math.AP].
          
          
\bibitem{Bonnet}     \textsc{Bonnet A.}, \emph{On the regularity of edges in image segmentation}. Annales de l'I.H.P. Analyse non linéaire \textbf{13-4} (1996), 485--528.


\bibitem{BC}
      \textsc{Bonnetier E.}, \textsc{Chambolle A.}, \emph{Computing the equilibrium configuration of epitaxially strained crystalline films}.  SIAM J. Appl. Math. \textbf{62} (2002), 1093--1121.



\bibitem{CF}     \textsc{Caffarelli L.A.},     \textsc{Friedman A.}, \emph{Regularity of the boundary of a capillary drop on an inhomogeneous plane and related variational problems}. Rev. Mat. Iberoamericana \textbf{1} (1985), 61--84.


	      


\bibitem{CL}  \textsc{Chambolle A.},  \textsc{Larsen C.J.}, \emph{$C^{\infty}$-regularity of the free boundary for a two-dimensional optimal compliance problem}. Calc. Var. Partial Differ. Equ. \textbf{18} (2003), 77--94.






   




\bibitem{davoli.piovano}
\textsc{Davoli E.}, \textsc{Piovano P.}, \emph{Derivation of a heteroepitaxial thin-film model}. Submitted 2018.


 



\bibitem{DFM}
\textsc{De Philippis G.},  \textsc{Maggi F.}, \emph{Regularity of free boundaries in anisotropic capillarity problems and the validity of Young's law}. Arch. Ration. Mech. Anal. \textbf{216}  (2015), 473--568. 
  
 \bibitem{DSW} \textsc{Dryja M.}, \textsc{Sarkis M.V.}, \textsc{Widlund O.B.}, \emph{Multilevel Schwartz Methods for Elliptic Problems with Discontinuous Coefficients in Three Dimensions}. Numerische Mathematik. \textbf{72} (1996), 313--348. 

 \bibitem{DD} \textsc{Dupr\'e A.M.}, \textsc{Dupr\'e P.}, \emph{Th\'eorie m\'ecanique de la chaleur}.  Gauthier-Villars, Paris 1869.



    
 





      
\bibitem{FFLMi}
	      \textsc{Fonseca I.}, \textsc{Fusco N.}, \textsc{Leoni G.}, \textsc{Millot V.}, \emph{Material voids in elastic solids with anisotropic surface energies}. J. Math. Pures Appl. \textbf{96} (2011) 591--639.        
      
\bibitem{FFLM2}
\textsc{Fonseca I.}, \textsc{Fusco N.}, \textsc{Leoni G.}, \textsc{Morini M.}, \emph{Equilibrium configurations of epitaxially strained crystalline films: existence and regularity results}. Arch. Ration. Mech. Anal.  \textbf{186} (2007), 477--537.
      
	      
                
\bibitem{FFLM4} 
                \textsc{Fonseca I.}, \textsc{Fusco N.}, \textsc{Leoni G.}, \textsc{Morini M.}, \emph{A model for dislocations in epitaxially strained elastic films}. Preprint (2016), https://arxiv.org/abs/1605.08432.
                



\bibitem{FG}
	      \textsc{Fried E.}, \textsc{Gurtin M.E.}, \emph{A unified treatment of evolving interfaces accounting for small deformations and atomic transport with emphasis on grain-boundaries and epitaxy}. Adv. Appl. Mech. \textbf{40} (2004), 1--177.

     
\bibitem{FM}
      \textsc{Fusco N.}, \textsc{Morini M.}, \emph{Equilibrium configurations of epitaxially strained elastic films: second order minimality conditions and qualitative properties of solutions}. Arch. Ration. Mech. Anal. \textbf{203} (2012), 247--327.

      
\bibitem{Ga}
	      \textsc{Gao H.}, \emph{Mass-conserved morphological evolution of hypocycloid cavities: a model of diffusive crack initiation with no associated energy barrier}. Proceedings of the Royal Society of London \textbf{448} (1995),   465--483.      
	
	
	      
\bibitem{Gauss} \textsc{Gauss C.F.},     \emph{Principia generalia theoriae figurae fluidorum in statu aequilibrii}. Springer, Berlin, Heidelberg 1877.


	      
      
      
      

\bibitem{Gr}
     \textsc{Grinfeld M.A.}, \emph{The Stress Driven Instabilities in Crystals: Mathematical Models and Physical Manifestations}. J. Nonlinear Sci. \textbf{3} (1993), 35--83.     
     
 \bibitem{Grisvard}
    \textsc{Grisvard P.}, \emph{Elliptic problems in nonsmooth domains}. Society for Industrial and Applied Mathematics (SIAM),
              Philadelphia (PA) (1985).





\bibitem{Knees-DiplomaThesis}
\textsc{Knees D.}, \emph{Regularit\"atsaussagen f\"ur zweidimensionale elastische Felder in Kompositen}. Diploma Thesis. University of Stuttgart, 2001. 

\bibitem{Knees2002} 
\textsc{Knees D.}, \emph{Regularity results for transmission problems for the Laplace and Lam\'e operators on polygonal or polyhedral domains}. SFB 404, Bericht 2002/10.

\bibitem{KS} 
\textsc{Knees D.}, \textsc{S\"andig A.-M.}, \emph{Regularity of elastic fields in composites}. Multifield problems in solid and fluid mechanics, 331--360, Lect. Notes Appl. Comput. Mech., \textbf{28}, Springer, Berlin 2006. 

\bibitem{KM}
\textsc{Kozlov V.A.}, \textsc{Maz'ya V.G.}, \emph{Spectral properties of the operator bundles generated by elliptic boundary value problems in a cone}. Func. Anal. Appl. \textbf{22} (1988), 114--121. 




	      






	      
\bibitem{Laplace}
 \textsc{Laplace P.S.}, \emph{Trait\'e de M\'ecanique C\'eleste}. Suppl\'ement au dixi\`eme livre du Trait\'e de M\'ecanique C\'eleste,  \textbf{4} (1805), 1--79.



  
   


\bibitem{NS}          
      \textsc{Nicaise S.}, \textsc{S\"andig, A-M}, \emph{General interface problems. I, II.}. Math. Methods Appl. Sci. \textbf{17} (1994), 395--429, 431--450. 
  
  
\bibitem{NS2}          
      \textsc{Nicaise S.}, \textsc{S\"andig, A.-M.}, \emph{Transmission problems for the Laplace and elasticity operators: regularity and boundary integral formulation}. Math. Models Methods Appl. Sci. \textbf{9} (1999), 855--898. 
      
      

  \bibitem{Pohl}
   \textsc{Pohl U.W.}, \emph{Epitaxy of Semiconductors: Introduction to Physical Principles}. Springer-Verlag Berlin, Heidelberg 2013.
  
  









\bibitem{Sp}	\textsc{Spaepen, F.} \emph{Substrate curvature resulting from the capillary forces of a liquid drop}. J. Mech. Phys. Solids \textbf{44}(5)  (1996), 675--681.      
	      
\bibitem{S2}
      \textsc{Spencer B.J.}, \emph{Asymptotic derivation of the glued-wetting-layer model and the contact-angle condition for Stranski-Krastanow islands}. Phys.  Rev. B \textbf{59} (1999), 2011--2017.	 	
	      

	      
\bibitem{S}
	      \textsc{Spencer B.J.}, \emph{Asymptotic solutions for the equilibrium crystal shape with small corner energy regularization}. Phys.  Rev. E \textbf{69} (2004), 011603.	 
	      
      
	      


   	            
	      
\bibitem{ST}
	      \textsc{Spencer B.J.},  \textsc{Tersoff J.}, \emph{Equilibrium Shapes and Properties of Epitaxially Strained Islands}. Phys. Rev. Lett. \textbf{79} (1997),  4858.
	  


\bibitem{SD}
	      \textsc{Srolovitz D.J.}, \textsc{Davis S.H.}, \emph{Do stresses modify wetting angles?} Acta Mater. \textbf{49} (2001), 1005--1007.     
	      
	      

\bibitem{T}
	       \textsc{Taylor J.E.}, \emph{Mathematical Models of Triple Junctions}. Interface Science \textbf{7} (1999),  243--249.
	      	 	      
		      
			      
\bibitem{TS}
	       \textsc{Tekalign W.T.}, \textsc{Spencer B.J.}, \emph{Thin-film evolution equation for a strained solid film on a deformable substrate: Numerical steady states}. J. Appl. Phys.  \textbf{102} (2007),  073503.	
	       
	       
 
 

 

 

	     
\bibitem{Y}
 \textsc{Young T.}, \emph{An Essay on the Cohesion of Fluids}. Phil. Trans. Roy. Soc. \textbf{95} (1805), 65--87.
 
 
\bibitem{Zisman}
 \textsc{Zisman W.A.}, \emph{Relation of the equilibrium contact angle to liquid and solid constitution}.  Advances in Chemistry, \textbf{43} (1964), 1--51.

\end{thebibliography}
\end{document}